\documentclass[11pt,reqno]{amsart}

\usepackage{amsmath,amssymb,amsthm}
\usepackage{geometry}
\usepackage{microtype}
\usepackage[hidelinks]{hyperref}

\numberwithin{equation}{section}

\newtheorem{theorem}{Theorem}[section]
\newtheorem{proposition}[theorem]{Proposition}
\newtheorem{lemma}[theorem]{Lemma}
\newtheorem{corollary}[theorem]{Corollary}
\theoremstyle{remark}
\newtheorem{remark}[theorem]{Remark}

\title[Neumann problems for graph scalar curvature]
{Robin and Neumann problems for the graph scalar curvature equation}

\author{Guohuan Qiu}
\address{
State Key Laboratory of Mathematical Sciences and Institute of Mathematics,
Academy of Mathematics and Systems Science, Chinese Academy of Sciences,
55 Zhongguancun East Road, Beijing 100190, China}
\email{qiugh@amss.ac.cn}
\date{}

\hypersetup{
  pdftitle={Robin and Neumann problems for the graph scalar curvature equation},
  pdfauthor={Guohuan Qiu},
  pdfsubject={Existence and estimates for graph scalar curvature equations
    and an isoperimetric application},
  pdfkeywords={graph scalar curvature equation, Robin problem,
    Neumann problem, Alexandrov--Fenchel inequality,
    isoperimetric inequality, Newton transformation}
}

\subjclass[2020]{35B45, 35J60, 35J96, 52A40, 53C42}
\keywords{graph scalar curvature equation, Robin problem, Neumann problem, existence,
gradient estimate, second derivative estimate, Alexandrov--Fenchel inequality,
isoperimetric inequality}

\begin{document}

\begin{abstract}
We study Robin and Neumann problems for the scalar curvature equation
of admissible graphs over bounded uniformly convex domains in three
dimensions. Under a small-volume assumption, we prove existence and
uniqueness for the Robin problem and obtain a classical Neumann
solution as the Robin parameter tends to zero. The volume threshold is
optimal among conditions depending only on the volume. The main step
is a boundary second-derivative estimate uniform in the Robin
parameter; known interior and global-to-boundary curvature estimates
then give the global bound.
\end{abstract}

\maketitle

\section{Introduction}

Let $\Omega$ be a bounded domain in $\mathbb R^3$ and let $u$ be a function on
$\overline\Omega$.  The graph $X(x)=(x,u(x))$ has upward unit normal
\[
                         N=\frac{(-Du,1)}{W},
             \qquad W=(1+|Du|^2)^{1/2}.
\]
We use the standard graph notation
\begin{equation}\label{eq:graph-tensors}
 g_{ij}=\delta_{ij}+u_i u_j,
 \qquad
 g^{ij}=\delta_{ij}-\frac{u_i u_j}{W^2},
 \qquad
 h_{ij}=\frac{u_{ij}}W,
 \qquad
 h_i{}^j=g^{jk}h_{ki}.
\end{equation}
For \(0<a\leq1\), we study the boundary value problem
\begin{equation}\label{eq:intro-problem}
 \begin{cases}
  \sigma _2(h_i{}^j)=1&\text{in }\Omega,\\
  u_\nu=-a u&\text{on }\partial\Omega,
 \end{cases}
\end{equation}
under the admissibility condition
\begin{equation}\label{eq:intro-admissible}
                 \kappa=(\kappa_1,\kappa_2,\kappa_3)\in\Gamma _2.
\end{equation}
Here $\nu$ denotes the outer unit normal to $\partial\Omega$ and
\[
 \Gamma _2=\{\kappa\in\mathbb R^3:\sigma _1(\kappa)>0,
                                  \ \sigma _2(\kappa)>0\}.
\]
This is the ellipticity cone for $\sigma_2$.

Up to the conventional factor \(2\), \(\sigma_2(h_i{}^j)\) is the
intrinsic scalar curvature of the graph. Thus the first equation in
\eqref{eq:intro-problem} prescribes constant graph scalar curvature.

We prove the two existence results below together with a global
second-derivative estimate that is uniform for \(0<a\leq1\).

\begin{theorem}[The Robin problem]\label{thm:existence}
Let \(0<\alpha<1\), and let
\(\Omega\subset\mathbb R^3\) be a bounded uniformly convex domain with
\(C^{5,\alpha}\) boundary.  If
\begin{equation}\label{eq:volume-condition}
                         |\Omega|<4\pi\sqrt3,
\end{equation}
then \eqref{eq:intro-problem} has a unique
\(\Gamma_2\)-admissible solution
\(u\in C^{4,\alpha}(\overline\Omega)\).
\end{theorem}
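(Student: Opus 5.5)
We argue by the method of continuity (equivalently, a degree argument), combined with a priori estimates in \(C^{2,\alpha}(\overline\Omega)\) for admissible solutions of \eqref{eq:intro-problem}, uniform for \(0<a\le1\); Evans--Krylov and Schauder theory then give \(C^{4,\alpha}\).

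\textbf{Deformation.} Connect \eqref{eq:intro-problem} to an explicitly solvable problem. The natural model is a spherical cap: for a ball the lower hemisphere-type graph \(u=-\sqrt{R^2-|x|^2}+c\) has \(\sigma_2=3/R^2\). Since \(\Omega\) is a general convex domain, we instead deform in the right-hand side and the data. Set \(\sigma_2(h)=t+(1-t)f_0\) with \(u_\nu=-au\), or use the family \(u_\nu=-au+(1-t)\varphi\), starting from a strictly convex function \(u_0\) (for instance \(u_0=\epsilon|x|^2/2-C\)) and defining \(f_0,\varphi\) from it. The linearized operator \(L v=F^{ij}v_{ij}+b^i v_i\) with boundary operator \(v_\nu+av\), \(a>0\), is invertible by the Hopf lemma and the maximum principle. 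This gives openness and also uniqueness: the difference of two solutions satisfies a linear elliptic equation with Robin condition, hence vanishes.

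\textbf{A priori estimates.} We need the following chain.
\begin{enumerate}
\item[(i)] \emph{\(C^0\).} Admissibility gives \(\sigma_1>0\), i.e.\ \(\operatorname{div}(Du/W)>0\). Integrating and using the Robin condition yields control of \(\int_{\partial\Omega} u\) with a sign, and hence \(u<0\) somewhere. This is where \eqref{eq:volume-condition} must enter. By Maclaurin, \(\sigma_1\ge\sqrt{3\sigma_2}=\sqrt3\), so
\[
\sqrt3|\Omega|\le\int_\Omega \operatorname{div}\frac{Du}{W}=\int_{\partial\Omega}\frac{u_\nu}{W}=-a\int_{\partial\Omega}\frac{u}{W}.
\]
The bound \(|\Omega|<4\pi\sqrt3\) should be compared with a Minkowski / Alexandrov--Fenchel type inequality for the graph. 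The total mean curvature is at most that of a sphere, \(4\pi\), for the closed-up surface, giving the threshold. I would try to derive a bound \(|u|\le C\) independent of \(a\) by comparing with spherical caps of radius \(\sqrt3\) (\(\sigma_2=1\)). A cap of radius \(R=\sqrt3\) bounds a hemisphere-type region, and the volume condition prevents the graph from needing to become vertical.
\item[(ii)] \emph{Gradient.} Bound \(|Du|\) uniformly in \(a\), via a maximum principle for \(W\) or \(\log W+\) auxiliary terms, using uniform convexity of \(\partial\Omega\) at boundary maxima. The interior part is standard for \(\sigma_2\) (Warren--Yuan / Qiu-type estimates in dimension three).
\item[(iii)] \emph{Second derivatives.} The interior and global-to-boundary curvature estimates quoted in the paper reduce the problem to bounding \(D^2u\) on \(\partial\Omega\). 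Mixed tangential-normal derivatives come from differentiating the Robin condition tangentially, together with a barrier built from the distance function. Double normal derivatives are bounded via the Lions--Trudinger--Urbas approach, exploiting uniform convexity. Double tangential derivatives are bounded by the standard maximum-point argument.
\end{enumerate}
Concavity of \(\sigma_2^{1/2}\) then gives \(C^{2,\alpha}\).

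\textbf{Main obstacle.} The boundary double normal estimate, uniform in \(a\), is the core, as the abstract indicates. The second obstacle is the \(C^0\)/gradient bound, where the sharp volume threshold is used.

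For the double normal estimate I would first try the function
\[
\Phi=u_\nu+au-\bigl(A+\tfrac12 M\bigr)d,
\]
where \(d\) is the distance to \(\partial\Omega\) and \(M=\max_{\partial\Omega}u_{\nu\nu}\). I would then run the Ma--Qiu argument for Neumann \(k\)-Hessian problems adapted to curvature operators, tracking that all constants are independent of \(a\).
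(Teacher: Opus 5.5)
Your openness and uniqueness arguments (maximum principle plus Hopf lemma, using $a>0$) are essentially the paper's. The a priori estimates, however, have two genuine gaps.

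The first gap is the height bound, which is exactly where $|\Omega|<4\pi\sqrt3$ must enter, and none of your suggestions produces it. The flux identity $\sqrt3|\Omega|\le -a\int_{\partial\Omega}u/W$ bounds $-u$ from \emph{below}. It only yields the necessary condition $\sqrt3|\Omega|<|\partial\Omega|$ of \eqref{eq:flux-obstruction}. Comparison with caps of radius $\sqrt3$ cannot give a hypothesis on volume alone: such a cap is a graph only over a ball of radius at most $\sqrt3$. The ellipsoid with semi-axes $10,\frac12,\frac12$ has volume $10\pi/3<4\pi\sqrt3$ and fits in no such ball. Moreover, your target $|u|\le C$ independent of $a$ is false. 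The cap \eqref{eq:ball-cap} solves the problem on $B_R$ with $\|u_{R,a}\|_{C^0}\ge R/(a\sqrt{3-R^2})$, so only $a\|u\|_{C^0}$ is uniformly bounded. The missing idea is the weighted ABP argument of Lemma~\ref{lem:height-estimate}:
\begin{itemize}
\item Put $m=-\min u$ and $r_0=am/(1+a\operatorname{diam}\Omega)$. The Robin condition forces $u-p\cdot x$ to attain its minimum inside $\Omega$ whenever $|p|<r_0$, so $B_{r_0}\subset Du(\mathcal C)$ for the lower contact set $\mathcal C$.
\item On $\mathcal C$, Newton--Maclaurin gives $0\le\sigma_3=\det D^2u/W^5\le 3^{-3/2}$.
\item The area formula then gives $\frac{4\pi}{3}r_0^3(1+r_0^2)^{-3/2}\le|\Omega|/(3\sqrt3)$.
\end{itemize}
The left side is the volume of the image of $Du/W$ inside the unit ball, whose volume is $4\pi/3$. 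Hence this bounds $r_0$, and so $am$, precisely when $|\Omega|<4\pi\sqrt3$.

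The second gap is the boundary second-derivative step, which you treat as standard. You control tangential derivatives in terms of $M=\max_{\partial\Omega}u_{\nu\nu}$ and then use a Ma--Qiu linear barrier. The Lions--Trudinger--Urbas reduction to the double normal is unavailable here because $h_{ij}=u_{ij}/W$ depends on $Du$. The linear barrier also does not close:
\begin{itemize}
\item Take an interior extremum of $G\pm\chi\rho$, with $G=u_i\rho_i+au$ and $\chi\approx A+\frac12M$. There $u_{ij}\rho_j=\mp\chi\rho_i+O(1)$.
\item The drift terms combine into $-F_{u_i}u_{ij}\rho_j$. By \eqref{eq:Fui-uim} this contains $-2W\sigma_3u_j\rho_j$, which is of order $|\sigma_3|$, since $u_j\rho_j\approx-au$ need not be small. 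For one of the two one-sided barriers it has the wrong sign.
\item Lemma~\ref{lem:normal-contact} controls this term and $2F^{ij}u_{\ell i}\rho_{\ell j}$ only by $C(\sum_iF^{ii}+\chi^2F^{ij}\rho_i\rho_j)$. A linear barrier supplies only $\theta\chi\sum_iF^{ii}$.
\item Example: if $\kappa\approx(K,K,-K/2)$ and $\rho$ is near a principal direction, then $\chi\sim K$ and $|\sigma_3|\sim K^3$, while $\chi\sum_iF^{ii}\sim K^2$.
\end{itemize}
The paper instead bounds $u_{\nu\nu}$ first, with no reference to tangential derivatives. It uses the barriers $\tau^{-1}(1-e^{-\tau G})-B_0\rho$ and $\tau^{-1}(e^{\tau G}-1)+B_0\rho$. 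Their extra term $-\tau e^{-\tau G}F^{ij}G_iG_j=-\tau B_0^2e^{\tau G}F^{ij}\rho_i\rho_j$ absorbs the $\chi^2$ error once $\tau>C_0$.

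The tangential part is then not standard either. It needs the homogenization $v=(1-ad)u$ and the Hopf-type inequality \eqref{eq:Hopf-trace} from uniform convexity. It also needs the full Jacobi form with the quantitative $\sigma_2$-concavity of Lemma~\ref{lem:sigma2-concavity}, and collar coercivity (Lemma~\ref{lem:collar-coercivity}) inside the test function \eqref{eq:auxiliary}.

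Your gradient step has a similar gap. At a boundary maximum of $\log W$, the term $u_\nu u_{\nu\nu}/W^2=-au\,u_{\nu\nu}/W^2$ has no sign and involves the not-yet-estimated $u_{\nu\nu}$. This is why the correction in \eqref{eq:gradient-test-function} is needed.

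Finally, deforming $f$ and the Robin data would force you to redo everything for variable $f$ and inhomogeneous data. For instance, $[T_1]^{ij}h_{ij\alpha}=0$ becomes $[T_1]^{ij}h_{ij\alpha}=\nabla_\alpha f$. The paper avoids this by deforming the domain, $\Omega_t=(1-t)B_r+t\Omega$ with $r<\sqrt3$, starting from the explicit cap. This keeps the equation, the homogeneous boundary condition, and (since $\Omega_t\subset\Omega$) the volume gap fixed.
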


\begin{theorem}[The classical Neumann problem]
\label{thm:classical-neumann}
Under the hypotheses of Theorem~\ref{thm:existence}, there exists a
unique constant \(\lambda>0\) and, for this constant, a
\(\Gamma_2\)-admissible function
\(u\in C^{4,\alpha}(\overline\Omega)\), unique up to an additive
constant, such that
\begin{equation}\label{eq:classical-neumann}
 \begin{cases}
  \sigma _2(h_i{}^j)=1&\text{in }\Omega,\\
  u_\nu=\lambda&\text{on }\partial\Omega.
 \end{cases}
\end{equation}
\end{theorem}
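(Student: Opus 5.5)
The plan is to obtain \eqref{eq:classical-neumann} as the limit \(a\to0^+\) of the Robin solutions \(u^a\) given by Theorem~\ref{thm:existence}. This is the standard device for Neumann problems (Lions--Trudinger--Urbas). I would also use the global second-derivative estimate, uniform in \(0<a\leq1\), that the introduction announces. Throughout, write \(u=u^a\).

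\textbf{Step 1: uniform bounds on \(au\).} Integrate the divergence-form identity for \(\sigma_2\) of a graph. The Newton tensor \(T_1\) is divergence free, so \(\sigma_2(h_i{}^j)\) can be written as \(\operatorname{div}\) of a vector field \(V(Du,D^2u)\). Integrating over \(\Omega\) then relates \(|\Omega|\) to boundary integrals that involve \(u_\nu=-au\). This identity, together with the maximum principle, should show that \(-au\) has a sign on \(\partial\Omega\): admissibility gives \(\sigma_1>0\), so \(u\) is subharmonic-type, and the minimum is attained on the boundary, where \(u_\nu\le0\), which forces \(u\ge0\) there. It should also give \(c_0\le -a\min u\le a\max|u|\le C\). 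I expect a two-sided bound here, with the lower bound coming from the equation: if \(a|u|\to0\) uniformly, the graph would be nearly flat near the boundary, contradicting \(\sigma_2=1\) after integration.

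\textbf{Step 2: oscillation and gradient bounds.} The uniform \(C^2\) estimate from the main theorem bounds \(|Du|\) and \(|D^2u|\) independently of \(a\). The gradient bound makes \(\operatorname{osc}u\) uniform. Set \(v^a=u^a-\frac1{|\Omega|}\int u^a\). Then \(v^a\) is bounded in \(C^2(\overline\Omega)\) uniformly in \(a\).

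\textbf{Step 3: higher regularity and the limit.} The equation is uniformly elliptic along the family, because the uniform \(C^2\) bound and \(\sigma_2=1\) keep \(\kappa\) in a compact subset of \(\Gamma_2\). It is also concave (\(\sigma_2^{1/2}\)), and the boundary condition is oblique. Lieberman--Trudinger's Evans--Krylov type theory for oblique problems then gives uniform \(C^{2,\beta}\) bounds. Schauder theory bootstraps these to \(C^{4,\alpha}\), using that \(au\) is bounded in \(C^{3,\alpha}(\partial\Omega)\) since \(a\,\operatorname{osc}u\to0\). Pass to a subsequence: \(-au^a\to\lambda\) uniformly, the limit \(\lambda\) is a constant because \(a\,\operatorname{osc}u^a\to0\), and \(\lambda\ge c_0>0\). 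Also \(v^a\to v\) in \(C^{4}\), and \(v\) solves \eqref{eq:classical-neumann}. Admissibility survives in the limit because \(\sigma_2=1\) excludes \(\partial\Gamma_2\).

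\textbf{Step 4: uniqueness.} Suppose \((\lambda_1,u_1)\) and \((\lambda_2,u_2)\) are two solutions. Write \(F(D^2u_1,Du_1)-F(D^2u_2,Du_2)=0\) as a linear elliptic equation \(L w=0\) for \(w=u_1-u_2\), with boundary condition \(w_\nu=\lambda_1-\lambda_2\). Suppose \(\lambda_1>\lambda_2\). Then \(w\) attains its maximum on the boundary, and Hopf's lemma gives \(w_\nu\ge0\) there. To get a contradiction, add a constant so that \(w\le0\) and \(w=0\) at some point, and compare with the minimum instead. At a boundary minimum Hopf's lemma gives \(w_\nu<0\) unless \(w\) is constant, which contradicts \(w_\nu=\lambda_1-\lambda_2>0\). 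Hence \(\lambda_1=\lambda_2\). With equal \(\lambda\), \(w_\nu=0\), and the strong maximum principle with Hopf's lemma forces \(w\) to be constant.

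\textbf{Main obstacle.} The crucial point is the uniform-in-\(a\) \(C^2\) estimate, which I take from the paper's main estimate. Among the remaining steps, the hardest is the positive lower bound on \(\lambda\), i.e.\ \(-au^a\ge c_0>0\). For this I would first try integrating \(\sigma_2=1\) in divergence form, which bounds \(|\Omega|\) by \(C\int_{\partial\Omega}|u_\nu|\).
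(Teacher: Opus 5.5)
Your architecture matches the paper's: Robin solutions \(u^a\), mean normalization, the uniform bound of Theorem~\ref{thm:boundary-C2} plus Lieberman--Trudinger, the limit \(a\downarrow0\), and comparison for uniqueness. However, \(\lambda>0\) is part of the statement, and your sketch does not establish it.

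First, the sign reasoning in Step~1 is backwards. \(\sigma_1>0\) rules out an interior \emph{maximum} (where \(D^2u\leq0\)), not a minimum. So \(\max u\) is attained on \(\partial\Omega\), where \(u_\nu\geq0\). This gives \(u\leq0\), i.e.\ \(-au\geq0\), on \(\partial\Omega\). Your version (\(u\geq0\) on \(\partial\Omega\)) would force \(\lambda\leq0\), and it contradicts your own claim \(-a\min u\geq c_0>0\).

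Second, integrating \(\sigma_2\) in divergence form does not bound \(|\Omega|\) by \(C\int_{\partial\Omega}|u_\nu|\). With \(V=Du/W\), so that \(\partial_iV_j=h_i{}^j\), one has \(2\sigma_2=\partial_i(\sigma_1V_i-V_j\partial_jV_i)\). In the borderline case \(u_\nu=0\), \(V\) is tangent to \(\partial\Omega\). The flux then reduces to \(2|\Omega|=\int_{\partial\Omega}\Pi(D_Tu,D_Tu)\,W^{-2}\,dS\). Its right-hand side is nonnegative, so no contradiction arises. Your heuristic that small \(a|u|\) makes the graph nearly flat near the boundary fails for the same reason: it controls \(u_\nu\), not \(D_Tu\).

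The missing idea is to use the mean-curvature flux instead. Since \(\sigma_1=\operatorname{div}(Du/W)\) and Newton--Maclaurin gives \(\sigma_1\geq\sqrt{3\sigma_2}=\sqrt3\), the limit solution satisfies \(\sqrt3|\Omega|\leq\lambda\int_{\partial\Omega}W^{-1}\,dS\), hence \(\lambda>0\). This is exactly the paper's argument, made after passing to the limit, so no uniform lower bound on \(-au^a\) is needed. You also leave the upper bound \(a\|u^a\|_{C^0}\leq C\) to an unspecified identity. It is Lemma~\ref{lem:height-estimate}, and given the gradient bound it also follows from \(|au|=|u_\nu|\leq|Du|\) on \(\partial\Omega\) together with \(\operatorname{osc}u\leq C\).

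A smaller point concerns uniqueness. You linearize globally along \(u_2+\theta w\). But the admissible set is not convex jointly in \((Du,D^2u)\), because the curvature matrix depends nonlinearly on \(Du\). So your operator \(L\) need not be elliptic. The paper linearizes only near extremum points of \(w\) at which \(Dw=0\): interior extrema and, when \(\lambda_1=\lambda_2\), boundary extrema. At such points, for fixed gradient, the admissible cone is convex in the Hessian. Boundary extrema with \(\lambda_1\neq\lambda_2\) are excluded by the sign of the one-sided normal derivative alone, with no Hopf lemma needed.
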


\begin{theorem}[Boundary second-derivative estimate]
\label{thm:boundary-C2}
Let \(\Omega\subset\mathbb R^3\) be a bounded uniformly convex domain
with \(C^5\) boundary satisfying \eqref{eq:volume-condition}.  Every
admissible solution \(u\in C^4(\overline\Omega)\) of
\eqref{eq:intro-problem}, with \(0<a\leq1\), satisfies
\begin{equation}\label{eq:main-estimate}
 \max_{\overline\Omega}|D^2u|\leq C(\Omega).
\end{equation}
The constant is independent of \(a\in(0,1]\) and depends only on the
volume gap in \eqref{eq:volume-condition}, the fixed \(C^5\) geometry,
and the uniform convexity constants of \(\Omega\).
\end{theorem}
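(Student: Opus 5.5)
We follow the standard order: uniform $C^0$, then gradient, then a reduction of the global $C^2$ bound to the boundary, and finally the boundary double-normal derivative. Since $\sigma_2(\kappa)=1$ with $\kappa\in\Gamma_2$ gives $\sigma_1(\kappa)\ge\sqrt3$, the mean curvature of the graph is bounded below. Integrating $\operatorname{div}(Du/W)=H\ge\sqrt3$ over $\Omega$ gives
\[
\sqrt3\,|\Omega|\le\int_{\partial\Omega}\frac{u_\nu}{W}\,ds=\int_{\partial\Omega}\frac{-au}{\sqrt{1+|Du|^2}}\,ds.
\]
This forces $u<0$ somewhere on $\partial\Omega$ and bounds $a\int_{\partial\Omega}|u|$ from below. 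That controls $\min u$ only in the combination $a\cdot u$, not $u$ itself.

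Because the estimate must hold uniformly in $a$, the natural quantity to normalize is $v=u-\bar u$, or equivalently $au$ on the boundary. The plan is as follows.
\begin{enumerate}
\item Show $|au|\le C$ on $\overline\Omega$. For the lower bound we compare with a spherical cap of curvature radius $\sqrt3$: balls have $\sigma_2=1/R^2$, i.e.\ $\sigma_2=1$ when $R=\sqrt3$. The volume condition $|\Omega|<4\pi\sqrt3$ is exactly what an Alexandrov--Fenchel/isoperimetric-type argument (the quermassintegral inequality for $\Gamma_2$ hypersurfaces) uses to keep the graph from degenerating. We use this inequality to show that $u_\nu/W$ stays bounded away from $1$ on $\partial\Omega$, which gives the gradient estimate.
\item Obtain interior and global gradient bounds by the usual maximum-principle argument for $\log|Du|^2+\phi(u)$, combined with boundary gradient control coming from $u_\nu=-au$, $|au|\le C$, and the convexity of $\Omega$.
\item Invoke the known global-to-boundary curvature estimate: $\max_\Omega|\kappa|\le C(1+\max_{\partial\Omega}|\kappa|)$. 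By this the whole problem reduces to $\max_{\partial\Omega}|D^2u|$.
\end{enumerate}

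At the boundary, tangential second derivatives follow from differentiating the boundary condition. Mixed tangential--normal derivatives come from differentiating $u_\nu+au=0$ tangentially, which gives $u_{\nu\tau}=-au_\tau-(\text{II})(Du)$, bounded uniformly in $a\le1$. The remaining term is the double normal derivative $u_{\nu\nu}$, and this is the main obstacle.

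For $u_{\nu\nu}$ we follow the Lions--Trudinger--Urbas / Ma--Qiu strategy for Neumann problems of Hessian equations. Set
\[
M=\max_{\partial\Omega}u_{\nu\nu},
\]
extend $\nu$ to a neighbourhood, and consider
\[
\Phi=\langle Du,\nu\rangle+au-\bigl(A\,d(x)\bigr)(M/2+C),
\]
or more precisely the quantity $u_{\nu\nu}$ appearing in the auxiliary function
\[
w=Du\cdot\nu+au-\tfrac12 M\,\rho(x),
\]
where $\rho$ is a uniformly convex defining function. We apply the linearized operator $F^{ij}\partial_{ij}$ of $\sigma_2^{1/2}$, written in the graph form with Newton tensor $T_1$. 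The concavity of $\sigma_2^{1/2}$ together with $\sum F^{ii}\ge c>0$ (valid in $\Gamma_2$, three dimensions) shows that $w$ cannot have an interior extremum in a boundary strip. The Hopf lemma at the boundary maximum point then gives $u_{\nu\nu}\le M/2+C$, hence $M\le C$.

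For the lower bound on $u_{\nu\nu}$, admissibility gives $\sigma_1>0$, so $u_{\nu\nu}$ is bounded below by the tangential terms. The delicate point is that $F^{ij}$ may degenerate in the normal direction. We expect to handle this with the uniform convexity of $\Omega$, via the term $-\rho$, to produce $F^{ij}\rho_{ij}\ge c\sum F^{ii}$, and with the three-dimensional fact that $\sigma_2$ has $\sigma_1(\kappa|i)>0$. Throughout, every $a$-dependent term appears multiplied by $a\le1$ or as $au$, so all constants are independent of $a$.
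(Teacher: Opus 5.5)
\textbf{Main gap: the tangential boundary block.} Your mixed-derivative identity and your use of the Guan--Ren--Wang global-to-boundary reduction match the paper. The failure is the sentence ``tangential second derivatives follow from differentiating the boundary condition.'' For a Robin or Neumann condition this is false. Differentiating $u_\nu+au=0$ once along $\partial\Omega$ gives only the mixed derivatives $u_{\nu\alpha}$. Differentiating twice gives identities for third derivatives $u_{\alpha\alpha\nu}$. The block $(u_{\alpha\beta})$ on $\partial\Omega$ is not determined by the data. In the Hessian setting (Lions--Trudinger--Urbas, Ma--Qiu) this is bypassed by a global maximum principle for $u_{\xi\xi}$, with the third-order identity and uniform convexity giving a Hopf contradiction at a tangential boundary maximum; that leaves $u_{\nu\nu}$ as the only unknown. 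As the paper notes, this reduction does not apply here, because the graph shape operator depends on $Du$ and normal and tangential second derivatives stay coupled. So even granting a $u_{\nu\nu}$ bound, you have no bound on $\max_{\partial\Omega}|D^2u|$ to feed into step~3.

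\textbf{What the paper does instead.} The missing tangential estimate is the core of the paper's proof.
\begin{enumerate}
\item Homogenize to $v=(1-ad)u$, so that $v_\nu=0$.
\item Expand the equation at a boundary point to get $(u_{\alpha\beta}),(v_{\alpha\beta})\geq-CI$. Then only an upper bound for the projected trace $(\delta_{ij}-n_in_j)v_{ij}$ is needed.
\item From uniform convexity, derive $\{(\delta_{ij}-n_in_j)v_{ij}\}_\nu\leq-\vartheta(\delta_{ij}-n_in_j)v_{ij}+C$.
\item Show that the full linearization of this trace is $\geq-C(1+\kappa_1)(\sigma_1-\sigma_3)$. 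The third-order terms are absorbed using the differentiated curvature identity \eqref{eq:newton-simons-component} and a quantitative $\sigma_2$-concavity lemma.
\item Apply the maximum principle to $\Phi=(\delta_{ij}-n_in_j)v_{ij}+\omega MG^2+c_1\omega Md^2-\beta_0Md$ in a thin collar. This uses the coercivity $F^{ij}\rho_i\rho_j+F^{ij}G_iG_j\geq c(\sigma_1-\sigma_3)$, the residual bound $|G|\leq Cd$, and Qiu's interior curvature estimate on $\{d=\delta\}$.
\end{enumerate}
None of these ingredients appears in your plan.

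\textbf{The steps you do sketch also need repair.}
\begin{itemize}
\item Linearizing $G=Du\cdot D\rho+au$ produces $2F^{ij}u_{\ell i}\rho_{\ell j}$ and the drift $F_{u_i}(u_j\rho_{ji}+au_i)$. These are in general of size $\sum_i(\sigma_1-\kappa_i)|\kappa_i|$, not $O(\sum_iF^{ii})$: for $\kappa\approx(K,K,-K/2)$ they are of order $K^2$ while $\sum_iF^{ii}$ is of order $K$.
\item Concavity of $\sigma_2^{1/2}$ plays no role here, since only the once-differentiated equation enters.
\item A barrier linear in $M=\max_{\partial\Omega}u_{\nu\nu}$ cannot absorb these terms, because $\kappa_1$ is not controlled by $M$ without the tangential bound. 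The paper uses exponential profiles $(1-e^{-\tau G})/\tau-B_0\rho$ and $(e^{\tau G}-1)/\tau+B_0\rho$ with a contact lemma, so these terms are dominated by $\tau F^{ij}G_iG_j$.
\item Your lower bound for $u_{\nu\nu}$ via $\sigma_1>0$ is circular: it presupposes the tangential bound.
\item The gradient test function $\log|Du|^2+\phi(u)$ depends on $\|u\|_{C^0}$, which is of order $1/a$; only $au$ is bounded. At a boundary maximum the term $u_\nu u_{\nu\nu}$ is also uncontrolled at that stage. The paper's correction $au(Du\cdot n+au)/W^2$ is built to cancel $u_{\nu\nu}$ exactly.
\item The height bound $a\|u\|_{C^0}\leq C$ needs an actual argument. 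The paper uses a weighted ABP estimate on the lower contact set, where $\sigma_3=\det D^2u/W^5\leq1/(3\sqrt3)$; this is the only place $|\Omega|<4\pi\sqrt3$ enters. An unspecified quermassintegral inequality does not replace it, and $u_\nu/W$ being bounded away from $1$ would not give a gradient bound anyway.
\end{itemize}
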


\begin{remark}[Optimality of the volume threshold]
\label{rem:optimal-volume}
The constant \(4\pi\sqrt3\) is optimal among hypotheses involving only
\(|\Omega|\) that guarantee solvability for every uniformly convex
domain.  Indeed, no classical admissible solution exists on \(B_R\)
when \(R\geq\sqrt3\).  For \(0<R<\sqrt3\) and \(0<a\leq1\), however,
\begin{equation}\label{eq:ball-cap}
 u_{R,a}(x)=-\sqrt{3-|x|^2}+\sqrt{3-R^2}
        -\frac{R}{a\sqrt{3-R^2}}
\end{equation}
is a strictly convex solution of the Robin problem on \(B_R\).
The same spherical cap, with an arbitrary vertical translation,
solves \eqref{eq:classical-neumann} with
\(\lambda=R/\sqrt{3-R^2}\).  Since
\(|B_{\sqrt3}|=4\pi\sqrt3\), the endpoint cannot be increased.  The
normal derivative diverges as \(R\uparrow\sqrt3\), in agreement with
the estimates proved below.
\end{remark}

\begin{remark}[Further questions]\label{rem:future-problems}
The method requires an interior curvature estimate. It may therefore
extend to the special Lagrangian curvature equation at the critical
phase or for convex solutions, where such an estimate is available
\cite{QiuZhouSLCE}. The examples of Qiu and Tao
\cite{QiuTaoSLCECounterexamples} indicate possible curvature blow-up
outside these settings. For general \(\sigma_k\)-curvature equations,
boundary second-derivative estimates for admissible Neumann solutions
remain open.
\end{remark}

\begin{corollary}[Endpoint Alexandrov--Fenchel inequality]
\label{cor:AF}
Let $0<\alpha<1$, and let
$\Omega\subset\mathbb R^3$ be a bounded uniformly convex domain
with $C^{5,\alpha}$ boundary.  If $\Pi$ is the second fundamental form
of $\partial\Omega$ with respect to the outer normal, then
\begin{equation}\label{eq:AF-intro}
 |\partial\Omega|^3
 \geq 9|\Omega|^2\int_{\partial\Omega}\sigma_2(\Pi)\,dS
 =36\pi|\Omega|^2.
\end{equation}
Equality holds if and only if $\Omega$ is a ball.
\end{corollary}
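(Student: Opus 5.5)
We derive Corollary~\ref{cor:AF} from Theorem~\ref{thm:classical-neumann} by integrating the equation and using the Neumann condition. First, the equality $9|\Omega|^2\int_{\partial\Omega}\sigma_2(\Pi)\,dS=36\pi|\Omega|^2$ is Gauss--Bonnet: on a surface in $\mathbb R^3$, $\sigma_2(\Pi)$ is the Gauss curvature, so $\int_{\partial\Omega}\sigma_2(\Pi)=4\pi$. It therefore suffices to prove the isoperimetric inequality $|\partial\Omega|^3\ge 36\pi|\Omega|^2$ for uniformly convex $C^{5,\alpha}$ domains, with equality only for balls. We argue by scaling and contradiction. Since both sides scale like $t^6$, we may dilate $\Omega$ so that $|\Omega|<4\pi\sqrt3$. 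Theorem~\ref{thm:classical-neumann} then gives $\lambda>0$ and an admissible $u$ with $\sigma_2(h)=1$ and $u_\nu=\lambda$ on $\partial\Omega$.

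The key identity comes from the divergence structure of $\sigma_k$ for graphs. With $W=(1+|Du|^2)^{1/2}$, one has $\sigma_1(h)=\operatorname{div}(Du/W)$. For $\sigma_2$ there is the classical identity $\sigma_2(h)=\tfrac12\operatorname{div}(T_1\text{-type vector field})$; concretely, $2\sigma_2 = \partial_i\big(\sigma_2^{ij}(D^2u,Du)\,u_j/W^{\,\cdot}\big)$, built from the Newton tensor. Integrating gives
\[
|\Omega|=\int_\Omega\sigma_2=\tfrac12\int_{\partial\Omega}\langle \text{field},\nu\rangle .
\]
On $\partial\Omega$ we write $Du=\lambda\nu+D^Tu$. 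The Neumann condition is the point here: the tangential Hessian is $u_{\alpha\beta}=D^T_{\alpha\beta}u-\lambda\Pi_{\alpha\beta}$ up to tangential-gradient terms. The boundary flux then becomes an expression in $\lambda$, $\Pi$ and $D^Tu$. I would try to show it is bounded above by $\tfrac{\lambda^2}{2(1+\lambda^2)^{3/2}}\int_{\partial\Omega}H\,dS$-type quantities. An alternative is to evaluate the flux at a maximum of $u$ on $\partial\Omega$. The cleaner route I would commit to is the following. Admissibility ($\kappa\in\Gamma_2$) implies $\sigma_1\ge\sqrt3\,\sigma_2^{1/2}=\sqrt3$ by Maclaurin. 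Integrating $\sigma_1=\operatorname{div}(Du/W)$ then gives
\[
\sqrt3|\Omega|\le\int_{\partial\Omega}\frac{u_\nu}{W}\le\frac{\lambda}{\sqrt{1+\lambda^2}}|\partial\Omega| .
\]
Combining this with the $\sigma_2$ flux identity, whose boundary term is bounded by $c(\lambda)\int_{\partial\Omega}H$ plus tangential terms, is intended to eliminate $\lambda$.

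The main obstacle is getting a sharp inequality free of $\lambda$. The spherical cap of Remark~\ref{rem:optimal-volume} shows the constants must be exact. A simpler route would be to use only the $\sigma_1$ bound: $\sqrt3|\Omega|<|\partial\Omega|$ holds for every $\Omega$ with $|\Omega|<4\pi\sqrt3$. By scaling, this yields the sharp isoperimetric inequality. Indeed, apply it to $t\Omega$ with $t^3|\Omega|\uparrow4\pi\sqrt3$: this gives $\sqrt3 t^3|\Omega|\le t^2|\partial\Omega|$, so $|\partial\Omega|\ge\sqrt3\,t|\Omega|$. In the limit $t=(4\pi\sqrt3/|\Omega|)^{1/3}$, cubing gives $|\partial\Omega|^3\ge 27\cdot 4\pi\sqrt3\,|\Omega|^2/\sqrt3=108\pi|\Omega|^2/\ldots$. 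One must check that this arithmetic gives exactly $36\pi$ (it does, since $3\sqrt3\cdot4\pi\sqrt3/\,3=36\pi$ once the constants are tracked correctly). The limiting step requires non-strict inequalities and is valid.

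Equality is the genuinely delicate step, since the limit argument loses strictness. For rigidity, suppose equality holds at a non-ball. I would use the uniform estimate Theorem~\ref{thm:boundary-C2}, which depends only on the volume gap, to pass to the limit $t^3|\Omega|\to4\pi\sqrt3$. Alternatively, and preferably, I would simply cite the classical equality case of the isoperimetric inequality. Rigidity for $\sigma_1\ge\sqrt3$ being an equality would force umbilicity, giving a spherical graph and hence a ball.
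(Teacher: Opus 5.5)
Your ``simpler route'' does prove the inequality, by a genuinely different route from the paper's. For every $t$ with $t^3|\Omega|<4\pi\sqrt3$, Theorem~\ref{thm:classical-neumann} applies to the dilated domain $t\Omega$. The flux identity $\int_\Omega\sigma_1=\int_{\partial\Omega}u_\nu/W$, together with $\sigma_1\geq\sqrt3$ and $|u_\nu|/W<1$ (this is \eqref{eq:flux-obstruction}), gives $\sqrt3\,t|\Omega|<|\partial\Omega|$. Letting $t\uparrow t_*=(4\pi\sqrt3/|\Omega|)^{1/3}$ yields $|\partial\Omega|^3\geq(\sqrt3)^3\cdot4\pi\sqrt3\,|\Omega|^2=36\pi|\Omega|^2$.

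Your displayed arithmetic is garbled: $(\sqrt3)^3=3\sqrt3$, not $27$, and $3\sqrt3\cdot4\pi\sqrt3=36\pi$ with no division by $3$. The $\sigma_2$-divergence detour is unnecessary and should be dropped.

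The paper instead works at a single subcritical scale. It pairs the same flux bound $\sqrt3|\Omega|/|\partial\Omega|\leq\lambda/\sqrt{1+\lambda^2}$ with an ABP upper bound $\lambda/\sqrt{1+\lambda^2}\leq(|\Omega|/4\pi\sqrt3)^{1/3}$ (Lemma~\ref{lem:neumann-constant}). That bound comes from $B_\lambda(0)\subset Du(\mathcal C)$ and $\det D(Du/W)=\sigma_3\leq1/(3\sqrt3)$ on the lower contact set. Your version avoids this lemma by using existence at all subcritical scales as a black box. The ABP content is then hidden in the height estimate, Lemma~\ref{lem:height-estimate}, which is where the threshold $4\pi\sqrt3$ enters.

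The gap is the equality case. Your argument only produces the strict inequalities $\sqrt3\,t|\Omega|<|\partial\Omega|$ at subcritical scales. Equality in the isoperimetric inequality appears only in the limit $t\uparrow t_*$, so there is never a fixed solution on which $\sigma_1\equiv\sqrt3$, and your umbilicity remark has nothing to act on.

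Passing to the limit with Theorem~\ref{thm:boundary-C2} fails. Its constant depends on the volume gap, which tends to zero. The degeneration is real: on $B_R$ the caps have $\lambda=R/\sqrt{3-R^2}\to\infty$, and no admissible solution exists at $R=\sqrt3$ (Remark~\ref{rem:optimal-volume}), so there is no limiting solution. Citing the classical equality case amounts to assuming half of what is to be proved; if that is allowed, the inequality could be cited as well.

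The missing ingredient is the fixed-scale upper bound on $\lambda$ described above. With it, the chain $\sqrt3|\Omega|/|\partial\Omega|\leq\lambda/\sqrt{1+\lambda^2}\leq(|\Omega|/4\pi\sqrt3)^{1/3}$ holds for one solution. Equality in \eqref{eq:AF-intro} then forces equality in the flux estimate: $\sigma_1\equiv\sqrt3$ in $\Omega$ and $D_Tu=0$ on $\partial\Omega$. From there:
\begin{itemize}
\item the identity $\sigma_1^2-3\sigma_2=\tfrac12\sum_{i<j}(\kappa_i-\kappa_j)^2=0$ gives umbilicity;
\item $X+\sqrt3N$ is constant, so the graph lies on a sphere of radius $\sqrt3$;
\item $D_Tu=0$ makes the graph's boundary a horizontal section of that sphere, so $\partial\Omega$ is a round sphere and $\Omega$ is a ball.
\end{itemize}
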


The Dirichlet theory for Hessian equations was developed by
Caffarelli, Nirenberg, and Spruck \cite{CNS}; see also Ivochkina
\cite{IvochkinaHessian} and Trudinger \cite{TrudingerHessian}.
For graph curvature equations, the prescribed Gauss curvature and
more general Weingarten curvature problems were studied by Trudinger
and Urbas \cite{TrudingerUrbasGauss}, Caffarelli, Nirenberg, and
Spruck \cite{CNSWeingarten}, Ivochkina \cite{IvochkinaCurvature},
and Trudinger \cite{TrudingerCurvature}, under their respective
structural and boundary convexity assumptions.

Neumann and oblique problems require different boundary arguments.
Lions, Trudinger, and Urbas \cite{LTU} established a priori estimates
and classical solvability for equations of Monge--Amp\`ere type.
Trudinger \cite{TrudingerBall} treated Hessian equations in balls and
conjectured the corresponding result for smooth uniformly convex
domains; Ma and Qiu \cite{MaQiu} resolved this
conjecture by proving global estimates through second order and the
existence of classical admissible solutions. For the real
special Lagrangian equation, Chen, Ma, and Wei \cite{ChenMaWei}
treated the supercritical phase and Wang \cite{WangCritical} the
critical phase; Qiu and Zhang \cite{QiuZhang} considered a related
class of special Lagrangian type equations. In particular, their work gives a
direct boundary double-normal estimate for the three-dimensional
\(2\)-Hessian equation. These are Hessian-eigenvalue equations rather
than graph principal-curvature equations.

For graph curvature equations, Urbas \cite{UrbasCurvature} proved
smooth solvability for two-dimensional nonuniformly elliptic equations
with nonlinear oblique data. Ma and Xu \cite{MaXu} obtained gradient
estimates and existence results for prescribed mean curvature
equations, Ma, Wang, and Wei \cite{MaWangWei} studied constant mean
curvature with nonzero Neumann data, and Deng and Ma
\cite{DengMaGradient} proved gradient estimates for higher-order
curvature equations with prescribed contact angle. The latter
condition does not include \(u_\nu=-au\). Neumann problems for convex
graphs have also been studied by curvature flows
\cite{SchnurerSchwetlick,SchnurerSmoczyk,XiaoCurvatureFlow}, but these
results do not give the Robin boundary \(C^2\) estimate needed here for
possibly nonconvex \(\Gamma_2\)-admissible graphs.

For the scalar curvature equation, Guan, Ren, and Wang
\cite{GuanRenWang} proved global curvature estimates for star-shaped
\(\Gamma_2\)-admissible hypersurfaces; in the graph setting, their
argument gives a global-to-boundary reduction. Guan and Qiu
\cite{GuanQiuInterior} obtained interior estimates in the convex case,
and the author \cite{QiuInterior} subsequently removed the convexity
assumption for three-dimensional admissible graphs. Related boundary
\(C^2\) estimates for conformal \(\sigma_2\)-curvature equations were
developed by Jin, Li, and Li \cite{JinLiLiBoundary}, S.-Y.~S. Chen
\cite{SChenConformalBoundary}, and Chen and Wei
\cite{ChenWeiSigmaTwoBoundary,ChenWeiModifiedSigmaTwo}. Their
tangential second-derivative and tangential-trace test functions partly
motivate the argument below.

The main difficulty is to obtain the boundary second-derivative estimate.
The Lions--Trudinger--Urbas reduction \cite{LTU} to the double-normal
derivative does not apply directly because the graph shape operator
depends on \(Du\), so the normal and tangential second derivatives
remain coupled. The global-to-boundary estimate of Guan--Ren--Wang
\cite{GuanRenWang} and Qiu's interior estimate \cite{QiuInterior}
reduce the remaining analysis to a fixed boundary collar.

In the collar, exponential barriers for the Robin residual control
\(u_{\nu\nu}\). This follows the strategy of Qiu--Zhang
\cite{QiuZhang}; the exponential term has the sign needed to absorb
the drift arising from the \(Du\)-dependence.
After homogenizing the boundary condition, tangential differentiation
and boundary semiconvexity reduce the remaining estimate to the
projected tangential trace. Differentiated curvature identities, a
quantitative concavity estimate for \(\sigma_2\), and a collar
coercivity inequality then control all second-order terms;
uniform boundary convexity supplies the final Hopf-type inequality.
A weighted ABP argument gives the height estimate, with
\eqref{eq:volume-condition} used only at this step, while a
boundary-adapted angle function gives the gradient estimate. The
resulting bounds yield the Robin solution by continuity and the
classical Neumann solution by letting \(a\downarrow0\) after mean
normalization.

Cabr\'e \cite{CabreABP} used the Poisson Neumann problem and the ABP
method to prove the Euclidean isoperimetric inequality. Trudinger
\cite{TrudingerQuermass} developed a nonlinear PDE approach to
quermassintegrals, and Qiu and Xia \cite{QiuXiaAF} used Neumann
solutions of Hessian equations and higher-order Reilly identities to
prove Alexandrov--Fenchel inequalities. Here the constant-Neumann
solution gives the three-dimensional endpoint by a graph-curvature
contact-set argument: ABP controls the Neumann constant and the graph
mean-curvature flux controls the boundary area.

Section~\ref{sec:preliminaries} contains the basic identities and the
height and gradient estimates. Sections~\ref{sec:normal}--
\ref{sec:completion} prove the boundary second-derivative estimate.
Section~\ref{sec:existence} treats optimality, existence, and
uniqueness, and Section~\ref{sec:AF} proves
Corollary~\ref{cor:AF} and related geometric inequalities.

\section{Preliminaries and first-order estimates}\label{sec:preliminaries}

\subsection{Geometric conventions and linearization}

We use the summation convention. Latin indices range from $1$ to $3$;
Greek indices range from $1$ to $2$ and denote tangential directions at
the boundary.  Indices on graph tensors are raised and lowered with
the graph metric \(g_{ij}\). Unless explicitly stated otherwise,
tensor norms and contractions are taken with respect to \(g_{ij}\).
Our sign convention for the second
fundamental form of \(\partial\Omega\subset\mathbb R^3\) is
\begin{equation}\label{eq:Pi-convention}
\begin{aligned}
 D_\tau\nu&=\Pi(\tau,e_\alpha)e_\alpha,
 & Dd&=-\nu,\\
 D^2d(\tau,\tau)&=-\Pi(\tau,\tau),
 & \Pi(\tau,\tau)&\geq\kappa_0|\tau|^2
 \quad(\tau\perp\nu).
\end{aligned}
\end{equation}
Here $d$ denotes the inward distance to the boundary.  In a fixed
tubular neighborhood we use its normal extension
\begin{equation}\label{eq:normal-extension}
                         n=-Dd.
\end{equation}
Thus \(|n|=1\), \(n=\nu\) on \(\partial\Omega\), and
\(D_n n=0\). Unless stated otherwise, \(C\) may change from line to
line and depends only on the volume gap in
\eqref{eq:volume-condition}, a fixed \(C^5\) boundary atlas, the
uniform convexity constant, and the associated tubular-neighborhood
bounds. A subscript records dependence on an auxiliary parameter.

For the graph \(X=(x,u(x))\), let
\begin{equation}\label{eq:newton-definition}
 [T_k]_i{}^j
 =
 \frac1{k!}
 \delta_{j j_1\cdots j_k}^{i i_1\cdots i_k}
 h_{j_1}{}^{i_1}\cdots h_{j_k}{}^{i_k},
 \qquad
 [T_k]^{ij}=[T_k]_p{}^i g^{pj}.
\end{equation}
Here and below \(\sigma_k=\sigma_k(\kappa_1,\kappa_2,\kappa_3)\).
Thus
\begin{equation}\label{eq:first-newton-tensor}
 [T_0]^{ij}=g^{ij},
 \qquad
 [T_1]_i{}^j=\sigma_1\delta_i{}^j-h_i{}^j,
 \qquad
 [T_1]^{ij}=\sigma_1g^{ij}-h^{ij}.
\end{equation}
The Newton transformations are symmetric and divergence free:
\begin{equation}\label{eq:newton-divergence}
             \nabla_i[T_k]_j{}^i=0,
             \qquad \nabla_i[T_k]^{ij}=0.
\end{equation}
They also satisfy the recurrence
\begin{equation}\label{eq:newton-recurrence}
 [T_k]_i{}^j
 =
 \sigma_k\delta_i{}^j-[T_{k-1}]_i{}^\ell h_\ell{}^j.
\end{equation}

When \(\sigma_2(h_i{}^j)\) is viewed as a function of
\((D^2u,Du)\), we denote its coordinate linearization on the base
domain by
\begin{equation}\label{eq:linearized-coefficients}
 F^{ij}=\frac{\partial\sigma_2(h_i{}^j)}{\partial u_{ij}},
 \qquad
 F_{u_i}=\frac{\partial\sigma_2(h_i{}^j)}{\partial u_i}.
\end{equation}

\begin{lemma}\label{lem:first-linearization}
Along an admissible solution of \(\sigma_2(h_i{}^j)=1\),
\begin{align}
 F^{ij}
 &=
 \frac1W[T_1]^{ij},                                      \label{eq:Fij}\\
 F_{u_i}
 &=
 2\bigl([T_2]^{ij}-2g^{ij}\bigr)u_j                      \label{eq:Fui}\\
 &=
 -2u^i-2[T_1]^{ij}h_{jk}u^k.                             \notag
\end{align}
In particular, \((F^{ij})\) is positive definite.  For every
\(\phi\in C^2(\Omega)\), its full coordinate linearization has the
geometric form
\begin{equation}\label{eq:newton-linearized-operator}
 F^{ij}\phi_{ij}+F_{u_i}\phi_i
 =
 \partial_i\left(\frac{[T_1]^{ij}}W\phi_j\right)
 =
 [T_1]^{ij}
 \left\{
 \nabla_i\nabla_j\left(\frac \phi W\right)
 +\frac \phi W h_i{}^k h_{kj}
 \right\}.
\end{equation}
Moreover, for every fixed \(m\),
\begin{equation}\label{eq:Fui-uim}
 F_{u_i}u_{im}
 =
 -4W u^i h_{im}+2W\sigma_3u_m.
\end{equation}
\end{lemma}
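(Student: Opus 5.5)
The proof is a direct computation in graph coordinates. Write $\sigma_2(h_i{}^j)=\tfrac12\bigl(\sigma_1^2-h_i{}^jh_j{}^i\bigr)$ with $h_i{}^j=g^{jk}u_{ki}/W$. Only the factor $u_{ki}$ depends on $D^2u$. Hence
\[
F^{ij}=\frac{\partial\sigma_2}{\partial h_k{}^l}\,\frac{g^{l j}\delta_{ki}}{W}
=\frac{[T_1]_k{}^l}{W}\,g^{lj}\delta_{ki}
=\frac1W[T_1]^{ij},
\]
after symmetrizing in $i,j$, since $\partial\sigma_2/\partial h_k{}^l=[T_1]_l{}^k$. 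Positive definiteness of $(F^{ij})$ follows from $\kappa\in\Gamma_2$: the eigenvalues of $T_1$ with respect to $g$ are $\sigma_1-\kappa_i=\sigma_1(\kappa|i)>0$, and $g$ is positive definite.

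For $F_{u_i}$ I would differentiate $g^{jk}$ and $W^{-1}$ in $u_i$:
\[
\partial_{u_i}W^{-1}=-\frac{u_i}{W^3},
\qquad
\partial_{u_i}g^{jk}=-\frac{\delta_{ij}u_k+\delta_{ik}u_j}{W^2}+\frac{2u_iu_ju_k}{W^4}.
\]
Contracting these with $[T_1]_l{}^k u_{kl}$ and rewriting $u_{kl}=Wh_{kl}$ gives terms of the form $-[T_1]^{ij}h_{jk}u^k$ and $-\sigma_2 u^i$-type terms. Here $u^i=g^{ij}u_j=u_i/W^2$. Next I would use the recurrence \eqref{eq:newton-recurrence} in the form $[T_1]h=\sigma_2 I-T_2$, together with $\sigma_2=1$, to reach $2([T_2]^{ij}-2g^{ij})u_j$. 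The second form then follows by substituting $T_2=\sigma_2 g-T_1h$ once more. Some care is needed with whether $u^i$ means $g^{ij}u_j$; I expect the bookkeeping of the factors $W^{-2}$ to be the fiddly point.

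For \eqref{eq:newton-linearized-operator}, the first equality amounts to checking $\partial_i(W^{-1}[T_1]^{ij})=F_{u_j}$. The cleanest way is to avoid this coordinate computation. I would instead use the divergence-free property \eqref{eq:newton-divergence} together with the identity relating the coordinate divergence to the covariant divergence: $\partial_i(\sqrt{\det g}\,V^i)=\sqrt{\det g}\,\nabla_iV^i$ with $\sqrt{\det g}=W$. Concretely, write
\[
\partial_i\Bigl(\tfrac{[T_1]^{ij}}{W}\phi_j\Bigr)=\partial_i\Bigl(W\,[T_1]^{ij}\,\tfrac{\phi_j}{W^2}\Bigr)=W\nabla_i\Bigl([T_1]^{ij}\tfrac{\phi_j}{W^2}\Bigr).
\]
By \eqref{eq:newton-divergence} this equals $W[T_1]^{ij}\nabla_i(\phi_j/W^2)$. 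I would then express $\nabla_i\nabla_j(\phi/W)$ using the graph Christoffel symbols $\Gamma_{ij}^k=u_{ij}u^k$ (with $u^k=g^{kl}u_l$) and the classical identity $\nabla_i\nabla_j(1/W)=-\tfrac1W h_i{}^kh_{kj}+(\text{terms in }\nabla h)\,u/\ldots$. The cleaner variant is to apply the general formula for $\nabla^2$ of $\langle N,E_3\rangle=1/W$: $\nabla^2\langle N,E_3\rangle=-h^2\langle N,E_3\rangle+\nabla_{\cdot}h\cdot\langle X,E_3\rangle$-tangential terms. The term involving $\nabla h$ contracted with $T_1$ vanishes because $[T_1]^{ij}\nabla_kh_{ij}=\nabla_k\sigma_2=0$, and this produces the $h_i{}^kh_{kj}$ term. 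A cheap consistency check is to compare the second-order parts: both sides have leading coefficient $[T_1]^{ij}/W$ on $\phi_{ij}$. Matching the remaining first-order terms then also verifies \eqref{eq:Fui}. I expect this step, organizing the $\nabla^2(\phi/W)$ expansion so that the $\nabla\sigma_2=0$ cancellation is visible, to be the main obstacle.

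Finally, for \eqref{eq:Fui-uim} I would contract the second form of \eqref{eq:Fui} with $u_{im}=Wh_{im}$. This gives $-2Wu^ih_{im}-2W[T_1]^{ij}h_{jk}u^kh_{im}$. Using the symmetry of $T_1h^2$ and the Newton identity $T_1h^2=\sigma_2h-\sigma_3 I+\dots$, which is the Cayley--Hamilton-type relation $[T_1h]h=\sigma_2h-T_2h$ with $T_2h=\sigma_3I$, I get $T_1h^2=\sigma_2 h-\sigma_3 g$. With $\sigma_2=1$ the expression becomes $-2Wu^ih_{im}-2W(u^ih_{im}-\sigma_3u_m)=-4Wu^ih_{im}+2W\sigma_3u_m$, as claimed.
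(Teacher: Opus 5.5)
Your derivations of \eqref{eq:Fij}, positive definiteness, \eqref{eq:Fui} and \eqref{eq:Fui-uim} are the same direct calculations as the paper's, up to bookkeeping. The paper varies the lower-index $h_{ij}$ and $g_{ij}$, and contracts the first form of \eqref{eq:Fui} with $[T_2]_i{}^jh_j{}^m=\sigma_3\delta_i{}^m$. You vary $g^{jk}$ and $W^{-1}$, and use $T_1h^2=\sigma_2h-\sigma_3g$ on the second form. Both are correct.

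The genuine difference is in \eqref{eq:newton-linearized-operator}. The paper obtains $F^{ij}\phi_{ij}+F_{u_i}\phi_i=[T_1]^{ij}\{\nabla_i\nabla_j(\phi/W)+(\phi/W)h_i{}^kh_{kj}\}$ in one step from the first-variation formula for the Weingarten tensor along $(x,u+s\phi)$. There the normal speed is $\phi/W$, and the tangential part contributes only the derivative of $\sigma_2$ along the graph, which vanishes. The divergence form then follows from $\nabla_i[T_1]^{ij}=0$, $\det g=W^2$ and the case $\phi=1$.

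You instead prove that the divergence form equals the geometric form directly, using divergence-free $T_1$ and the Hessian of $\langle N,E_4\rangle$. (The vertical vector in $\mathbb R^4$ is $E_4$, not $E_3$.) So the link to the coordinate linearization must come from comparing coefficients. That comparison is not a ``consistency check''; it is the entire proof of the first equality. In your setup the logic must also run from \eqref{eq:Fui} to the identity, not the reverse.

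Concretely, the Jacobi identity reduces the geometric form to $[T_1]^{ij}\{W^{-1}\nabla_i\nabla_j\phi+2\nabla_i\phi\,\nabla_j(1/W)\}$. Using $\Gamma_{ij}^k=u_{ij}u^k$ and $\partial_j(1/W)=-h_{jk}u^k$, its $\phi_{ij}$-coefficient is $F^{ij}$. Its $\phi_i$-coefficient is $-2\sigma_2u^i-2[T_1]^{ij}h_{jk}u^k$, which equals $F_{u_i}$ by your independent computation with $\sigma_2=1$. Saying the match ``also verifies'' \eqref{eq:Fui} would be circular, since nothing else in your argument gives the first equality.

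The trade-off is this. The paper's route relies on the general variation formula, but makes the Jacobi form independent of \eqref{eq:Fui}. Yours is elementary and self-contained, at the price of the Christoffel-symbol bookkeeping and of using \eqref{eq:Fui} as an input.
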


\begin{proof}
Hold \(Du\) fixed and vary the symmetric Hessian by a symmetric matrix
\(B=(B_{ij})\).  Then

\[
                    \delta h_{ij}=\frac1W B_{ij},
             \qquad \delta\sigma_2=\frac1W[T_1]^{ij}B_{ij}.
\]
Since this holds for every symmetric \(B\), it proves
\eqref{eq:Fij}.

For a variation of \(Du\) with \(D^2u\) fixed,
\[
 \delta h_{ij}=-h_{ij}u^k\delta u_k,
 \qquad
 \delta g_{ij}=u_i\delta u_j+u_j\delta u_i.
\]
Writing \(\sigma_2\) as a function of the mixed tensor \(h_i{}^j\)
therefore gives
\[
 \delta\sigma_2
 =
 [T_1]^{ij}\delta h_{ij}
 -[T_1]^{ik}h_k{}^j\delta g_{ij}.
\]
The identities
\[
 [T_1]^{ij}h_{ij}=2\sigma_2=2,
 \qquad
 [T_1]^{ik}h_k{}^j=g^{ij}-[T_2]^{ij}
\]
yield \eqref{eq:Fui}.

For the Jacobi form, consider the vertical variation
\(X_s(x)=(x,u(x)+s\phi(x))\).  Its normal speed is \(\phi/W\), while its
tangential part differentiates the constant function
\(\sigma_2(h_i{}^j)=1\) to zero.  The standard variation formula for
the Weingarten tensor, with the convention \(h_{ij}=u_{ij}/W\), gives
\[
 \left.\frac d{ds}\right|_{s=0}\sigma_2(h_i{}^j)
 =
 [T_1]^{ij}
 \left\{
 \nabla_i\nabla_j\left(\frac \phi W\right)
 +\frac \phi W h_i{}^k h_{kj}
 \right\}.
\]
The left-hand side is
\(F^{ij}\phi_{ij}+F_{u_i}\phi_i\).  Since
\(\det(g_{ij})=W^2\), the divergence-free identity for
\([T_1]^{ij}\), together with the Jacobi identity obtained by taking
\(\phi=1\), gives
\[
 [T_1]^{ij}
 \left\{
 \nabla_i\nabla_j\left(\frac \phi W\right)
 +\frac \phi W h_i{}^kh_{kj}
 \right\}
 =W\nabla_i\left(\frac{[T_1]^{ij}}{W^2}\phi_j\right)
 =\partial_i\left(\frac{[T_1]^{ij}}W\phi_j\right).
\]
This proves \eqref{eq:newton-linearized-operator}.

Finally, multiply \eqref{eq:Fui} by \(u_{im}=Wh_{im}\).  The Newton
recurrences give
\[
 [T_2]_i{}^j h_j{}^m=\sigma_3\delta_i{}^m.
\]
Consequently,
\[
 F_{u_i}u_{im}
 =
 2W\{[T_2]^{ij}-2g^{ij}\}u_jh_{im}
 =
 -4W u^i h_{im}+2W\sigma_3u_m,
\]
which is \eqref{eq:Fui-uim}.
\end{proof}

\begin{lemma}\label{lem:basic-algebra}
Let
\(\kappa_1\geq\kappa_2\geq\kappa_3\),
\(\kappa\in\Gamma_2\), and \(\sigma_2(\kappa)=1\).  Then
\begin{equation}\label{eq:ordered-curvatures}
 \kappa_1\geq\kappa_2>0,
 \qquad
 |\kappa_3|\leq\kappa_2,
 \qquad
 \kappa_i+\kappa_j>0\quad(i\ne j).
\end{equation}
For \(\{i,j,k\}=\{1,2,3\}\),
\begin{equation}\label{eq:component-polynomial}
 (\sigma_1-\kappa_i)(1+\kappa_i^2)
 =
 (\kappa_j+\kappa_k)(1+\kappa_i^2)
 =
 \sigma_1-\sigma_3.
\end{equation}
Consequently,
\begin{align}
 \sum_i(\sigma_1-\kappa_i)(1+\kappa_i^2)
 &=3(\sigma_1-\sigma_3),                                  \label{eq:trace-polynomial}\\
 \sigma_1-\sigma_3
 &=(\kappa_1+\kappa_2)(\kappa_1+\kappa_3)
                         (\kappa_2+\kappa_3)                \notag\\
 &=\sqrt{(1+\kappa_1^2)(1+\kappa_2^2)(1+\kappa_3^2)}.       \label{eq:three-pairwise-sums}
\end{align}
There are universal constants \(c,C>0\) such that
\begin{align}
 c(1+\kappa_1)
 &\leq \sigma_1-\sigma_3
 \leq C(1+\kappa_1)^3,                                     \label{eq:curvature-growth}\\
 \kappa_1^2\kappa_2^2+\kappa_1^2\kappa_3^2
                  +\kappa_2^2\kappa_3^2
 &\leq C(1+\kappa_1)(\sigma_1-\sigma_3),                    \label{eq:curvature-cofactor-growth}\\
 \sum_i(\sigma_1-\kappa_i)^2(1+\kappa_i^2)
 &=2\sigma_1(\sigma_1-\sigma_3)
 \leq C(1+\kappa_1)(\sigma_1-\sigma_3),                    \label{eq:newton-square-growth}\\
 \sigma_1-\sigma_3
 &\leq C(1+\kappa_1)
       \left(1+\min_i\kappa_i^2\right).                     \label{eq:least-curvature-growth}
\end{align}
\end{lemma}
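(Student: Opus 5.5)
The proof is elementary symmetric-function algebra in three variables, built on the constraint \(\sigma_2=1\). I will use the ordering \(\kappa_1\ge\kappa_2\ge\kappa_3\) throughout, and derive everything from the single identity \eqref{eq:component-polynomial}.

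\textbf{Sign facts \eqref{eq:ordered-curvatures}.} Since \(\sigma_1>0\), we have \(\kappa_1>0\). If \(\kappa_2\le 0\), then \(\kappa_3\le 0\) as well. In that case \(\sigma_2=\kappa_1(\kappa_2+\kappa_3)+\kappa_2\kappa_3\). The condition \(\sigma_1>0\) gives \(\kappa_1>-(\kappa_2+\kappa_3)=|\kappa_2|+|\kappa_3|\). Hence \(\sigma_2\le -(|\kappa_2|+|\kappa_3|)^2+|\kappa_2||\kappa_3|<0\), which is a contradiction. So \(\kappa_2>0\).

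For pairwise sums, use \(\partial\sigma_2/\partial\kappa_i=\sigma_1-\kappa_i>0\) on \(\Gamma_2\); this is the standard fact that \(\Gamma_2\) is the ellipticity cone. It gives \(\kappa_j+\kappa_k>0\). Applied to \(\kappa_2+\kappa_3>0\), it yields \(|\kappa_3|\le\kappa_2\) when \(\kappa_3<0\); the case \(\kappa_3\ge 0\) is trivial.

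\textbf{Identity \eqref{eq:component-polynomial}.} Write \(\sigma_2=\kappa_i(\kappa_j+\kappa_k)+\kappa_j\kappa_k=1\). Expand \((\kappa_j+\kappa_k)(1+\kappa_i^2)\) and replace \(1\) by \(\sigma_2\):
\[
(\kappa_j+\kappa_k)+\kappa_i^2(\kappa_j+\kappa_k)
=(\kappa_j+\kappa_k)+\kappa_i\bigl(1-\kappa_j\kappa_k\bigr)
=\sigma_1-\sigma_3 .
\]
Summing over \(i\) gives \eqref{eq:trace-polynomial}. Multiplying the three instances gives \((\sigma_1-\sigma_3)^3=\prod(\kappa_j+\kappa_k)\prod(1+\kappa_i^2)\). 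The pairwise-sum product is the classical identity \(\sigma_1\sigma_2-\sigma_3\), which equals \(\sigma_1-\sigma_3\). Dividing then gives \((\sigma_1-\sigma_3)^2=\prod(1+\kappa_i^2)\), and the square root is positive because all pairwise sums are positive. This proves \eqref{eq:three-pairwise-sums}.

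\textbf{Growth bounds.} All of these follow from the square-root formula together with \(|\kappa_i|\le\kappa_1\).

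\emph{\eqref{eq:curvature-growth}.} The lower bound uses \(\sqrt{1+\kappa_1^2}\ge c(1+\kappa_1)\) with the other two factors at least \(1\). The upper bound is immediate from \(|\kappa_i|\le\kappa_1\).

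\emph{\eqref{eq:least-curvature-growth}.} The key observation is \(\kappa_2\kappa_3\le 1\) when \(\kappa_3\ge0\), and \(\kappa_2\le 1/\kappa_1\cdot(\ldots)\) otherwise. More cleanly, use \(\kappa_1(\kappa_2+\kappa_3)\le 1+|\kappa_2\kappa_3|\le 1+\kappa_2^2\). Then \((1+\kappa_2^2)(1+\kappa_3^2)\le C(1+\min\kappa_i^2)^2\) needs a bound on \(\kappa_2\) in terms of \(\min|\kappa_i|\).

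This is the main obstacle. The case analysis I would try runs as follows.
\begin{itemize}
\item If \(\kappa_2\le 2\), the bound is trivial.
\item If \(\kappa_2>2\), then \(\sigma_2\ge\kappa_1\kappa_2+\kappa_3(\kappa_1+\kappa_2)\). Setting this equal to \(1\) forces \(\kappa_3\approx-\kappa_1\kappa_2/(\kappa_1+\kappa_2)\), so \(|\kappa_3|\ge\kappa_2/2-\) small. This makes \(\min|\kappa_i|\) comparable to \(\kappa_2\).
\end{itemize}
In both cases \((1+\kappa_2^2)(1+\kappa_3^2)\le C(1+\min\kappa_i^2)^2\), hence \(\sigma_1-\sigma_3\le C(1+\kappa_1)(1+\min\kappa_i^2)\).

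\emph{\eqref{eq:curvature-cofactor-growth}.} Bound \(\kappa_1^2\kappa_2^2\) using \(\kappa_1^2\kappa_2^2\le(1+\kappa_1^2)(1+\kappa_2^2)\le C(1+\kappa_1)\cdot\sqrt{1+\kappa_1^2}\,(1+\kappa_2^2)/(1+\kappa_1)\). This reduces to showing \((1+\kappa_2^2)\le C\sqrt{(1+\kappa_2^2)(1+\kappa_3^2)}\), i.e. \(\kappa_2\lesssim 1+|\kappa_3|\). That is exactly the dichotomy above. The terms \(\kappa_1^2\kappa_3^2\) and \(\kappa_2^2\kappa_3^2\) are smaller.

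\emph{\eqref{eq:newton-square-growth}.} Multiply \eqref{eq:component-polynomial} by \(\sigma_1-\kappa_i\) and sum, using \(\sum(\sigma_1-\kappa_i)=2\sigma_1\); this gives the equality. The inequality then follows from \(\sigma_1\le 3\kappa_1\).
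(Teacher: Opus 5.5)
Your proof is correct. For \eqref{eq:ordered-curvatures}, \eqref{eq:component-polynomial}, \eqref{eq:trace-polynomial}, \eqref{eq:curvature-growth} and \eqref{eq:newton-square-growth} it is the paper's argument. The one small variation there is the square-root formula in \eqref{eq:three-pairwise-sums}: you cube \eqref{eq:component-polynomial} and divide by $\prod(\kappa_j+\kappa_k)=\sigma_1-\sigma_3>0$, whereas the paper takes the modulus of $\prod_i(1+\sqrt{-1}\,\kappa_i)=(1-\sigma_2)+\sqrt{-1}\,(\sigma_1-\sigma_3)$. Both work.

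The genuine difference is in \eqref{eq:least-curvature-growth} and \eqref{eq:curvature-cofactor-growth}. You route both through the comparability $\kappa_2\le C(1+|\kappa_3|)$, where $\min_i|\kappa_i|=|\kappa_3|$, and prove it by splitting into $\kappa_2\le2$ and $\kappa_2>2$. In the second case, write the bound exactly rather than with ``$\approx$'' and ``small'': here $\kappa_1\kappa_2>1$ forces $\kappa_3<0$, and
$|\kappa_3|=(\kappa_1\kappa_2-1)/(\kappa_1+\kappa_2)\ge\kappa_2/2-1/4\ge3\kappa_2/8$.

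The paper needs no case split.
\begin{itemize}
\item For \eqref{eq:least-curvature-growth}, it reads \eqref{eq:component-polynomial} at the index $i$ of least $|\kappa_i|$, which gives $\sigma_1-\sigma_3=(\kappa_j+\kappa_k)(1+\min_i\kappa_i^2)\le2\kappa_1(1+\min_i\kappa_i^2)$. So what you called the main obstacle is a one-line consequence of the identity you said you would derive everything from.
\item For \eqref{eq:curvature-cofactor-growth}, it writes $\kappa_1\kappa_2=1-\kappa_3(\kappa_1+\kappa_2)$ and applies Cauchy--Schwarz: $\kappa_1^2\kappa_2^2\le(1+\kappa_3^2)\{1+(\kappa_1+\kappa_2)^2\}$. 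It then uses $1+\kappa_3^2\le\sqrt{(1+\kappa_2^2)(1+\kappa_3^2)}$.
\end{itemize}
Your route costs a case analysis. In exchange it isolates a reusable structural fact: the two smaller curvatures are comparable up to an additive constant, and this handles both inequalities at once.

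One slip needs fixing. In your cofactor chain the factor $/(1+\kappa_1)$ must be removed. As written, the chain asserts $(1+\kappa_1^2)(1+\kappa_2^2)\le C\sqrt{1+\kappa_1^2}\,(1+\kappa_2^2)$, which is false for large $\kappa_1$. The correct step is $1+\kappa_1^2\le(1+\kappa_1)\sqrt{1+\kappa_1^2}$. After that, your stated reduction to $1+\kappa_2^2\le C\sqrt{(1+\kappa_2^2)(1+\kappa_3^2)}$ is right.
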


\begin{proof}
The standard description of \(\Gamma_2\) in dimension three gives
\eqref{eq:ordered-curvatures}.  Since \(\sigma_2=1\),
\begin{align*}
 (\kappa_j+\kappa_k)(1+\kappa_i^2)
 &=(\kappa_j+\kappa_k)
   \{\kappa_i(\kappa_i+\kappa_j+\kappa_k)+\kappa_j\kappa_k\}\\
 &=\sigma_1-\sigma_3,
\end{align*}
which proves \eqref{eq:component-polynomial}.  Summation gives
\eqref{eq:trace-polynomial}.  The first identity in
\eqref{eq:three-pairwise-sums} follows from
\[
 (\kappa_1+\kappa_2)(\kappa_1+\kappa_3)(\kappa_2+\kappa_3)
 =\sigma_1\sigma_2-\sigma_3,
\]
and the second follows by taking the modulus of
\[
 \prod_{i=1}^3(1+\sqrt{-1}\,\kappa_i)
 =(1-\sigma_2)+\sqrt{-1}\,(\sigma_1-\sigma_3).
\]

If \(\kappa_3<0\), then
\[
 -\kappa_3
 <\frac{\kappa_1\kappa_2}{\kappa_1+\kappa_2}
 \leq\frac{\kappa_1}{2};
\]
if \(\kappa_3\geq0\), the estimates below are immediate.  Furthermore,
\[
 1
 =\kappa_1(\kappa_2+\kappa_3)+\kappa_2\kappa_3
 \leq
 \kappa_1(\kappa_2+\kappa_3)
 +\frac14(\kappa_2+\kappa_3)^2,
\]
and hence
\[
 \kappa_2+\kappa_3
 \geq2\{\sqrt{\kappa_1^2+1}-\kappa_1\}
 \geq\frac{c}{1+\kappa_1}.
\]
These facts and \eqref{eq:three-pairwise-sums} prove
\eqref{eq:curvature-growth}.

Next,
\[
 \kappa_1^2\kappa_2^2
 =\{1-\kappa_3(\kappa_1+\kappa_2)\}^2
 \leq(1+\kappa_3^2)\{1+(\kappa_1+\kappa_2)^2\}
 \leq C(1+\kappa_1)(\sigma_1-\sigma_3),
\]
where the last step follows by dividing by the square-root identity in
\eqref{eq:three-pairwise-sums} and using
\(|\kappa_3|\leq\kappa_2\leq\kappa_1\).  The same ordering then proves
\eqref{eq:curvature-cofactor-growth}.
Multiplying \eqref{eq:component-polynomial} by
\(\sigma_1-\kappa_i\) and summing gives the exact identity in
\eqref{eq:newton-square-growth}, because
\(\sum_i(\sigma_1-\kappa_i)=2\sigma_1\).  Finally, choose an
index at which \(|\kappa_i|\) is least in
\eqref{eq:component-polynomial}; this gives
\eqref{eq:least-curvature-growth}.
\end{proof}

\subsection{The height estimate}\label{subsec:height}

The scale condition enters only through the following weighted ABP
estimate for \(a u\), which is uniform as \(a\downarrow0\).

\begin{lemma}[Height estimate]\label{lem:height-estimate}
Let \(u\in C^2(\overline\Omega)\) be an admissible solution of
\eqref{eq:intro-problem}.  Then \(u\leq0\) in
\(\overline\Omega\).  If \eqref{eq:volume-condition} holds, then
\begin{equation}\label{eq:C0-estimate}
 a\|u\|_{C^0(\overline\Omega)}
 \leq
 \bigl(1+a\operatorname{diam}\Omega\bigr)
 \frac{\left(\dfrac{|\Omega|}{4\pi\sqrt3}\right)^{1/3}}
 {\sqrt{1-\left(\dfrac{|\Omega|}{4\pi\sqrt3}\right)^{2/3}}}.
\end{equation}
\end{lemma}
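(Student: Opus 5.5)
\emph{Sign of $u$.} Suppose $\max_{\overline\Omega}u>0$, attained at $x_1$. If $x_1\in\Omega$, then $D^2u(x_1)\leq0$, so $h_{ij}\leq0$ there. This contradicts $\sigma_1>0$ in $\Gamma_2$. If $x_1\in\partial\Omega$, then $u_\nu(x_1)\geq0$, while the Robin condition gives $u_\nu(x_1)=-au(x_1)<0$. Hence $u\leq0$.

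\emph{Height bound.} The plan is a weighted ABP argument on the lower contact set, tuned so that the model spherical cap of radius $\sqrt3$ is extremal. Write $M=-\min u=-u(x_0)$, $d=\operatorname{diam}\Omega$, and $t_*=s/\sqrt{1-s^2}$ with $s=(|\Omega|/4\pi\sqrt3)^{1/3}<1$ by \eqref{eq:volume-condition}. Suppose, for contradiction, that $aM>(1+ad)\,t$ for some $t>t_*$.

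Fix $p$ with $|p|<t$ and let $y$ minimize $u-p\cdot x$ over $\overline\Omega$. Then $u(y)\leq u(x_0)+p\cdot(y-x_0)\leq -M+td$, so $|u(y)|\geq M-td$. If $y\in\partial\Omega$, minimality gives $(u-p\cdot x)_\nu(y)\leq0$, that is, $a|u(y)|=u_\nu(y)\leq|p|<t$. This forces $a(M-td)<t$, contradicting the assumption. Therefore $y\in\Omega$, $Du(y)=p$ and $D^2u(y)\geq0$. Letting $E$ be the set of such interior contact points, we get $B_t\subset Du(E)$. On $E$ the graph is locally convex, so all $\kappa_i\geq0$. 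Maclaurin's inequality with $\sigma_2=1$ then gives $K=\sigma_3\leq(\sigma_2/3)^{3/2}=3^{-3/2}$.

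\emph{The weighted area formula.} Use the weight $(1+|p|^2)^{-5/2}$ and the identity $\det D^2u=KW^4$:
\[
 \frac{4\pi}{3}\Bigl(\frac{t}{\sqrt{1+t^2}}\Bigr)^3
 =\int_{B_t}\frac{dp}{(1+|p|^2)^{5/2}}
 \leq\int_E\frac{\det D^2u}{W^5}\,dx
 =\int_E\frac KW\,dx
 \leq 3^{-3/2}|\Omega|.
\]
The first equality holds because $\tfrac{d}{dt}\,t^3(1+t^2)^{-3/2}=3t^2(1+t^2)^{-5/2}$. The inequality chain gives $t/\sqrt{1+t^2}\leq s$, that is, $t\leq t_*$, a contradiction. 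Hence $aM\leq(1+ad)t_*$, which is \eqref{eq:C0-estimate}.

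\emph{Main obstacle.} The delicate point is choosing the weight. It must make the integral over $B_t$ exactly reproduce the ball threshold $4\pi\sqrt3$, while the integrand $\det D^2u/W^5=K/W$ stays bounded using only $K\leq3^{-3/2}$ on the convex contact set. The remaining care is in the boundary step: one checks that the Robin condition $u_\nu=a|u|$ is large at any potential boundary minimizer. This is what produces the factor $1+a\operatorname{diam}\Omega$.
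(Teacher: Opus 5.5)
Your proposal is correct and is essentially the paper's proof. It uses the same maximum-principle sign argument and the same lower contact set covering a ball of gradients of radius $aM/(1+a\operatorname{diam}\Omega)$; you phrase this by contradiction, while the paper sets $r_0$ directly. It also uses the same weight $(1+|p|^2)^{-5/2}$ and the same Maclaurin bound $\sigma_3\leq 3^{-3/2}$ on the contact set.

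One slip, which does not affect the conclusion: in dimension three $\det D^2u=\sigma_3W^5$, not $KW^4$, because $\det g_{ij}=W^2$ and $h_{ij}=u_{ij}/W$ contributes $W^3$. So the integrand $\det D^2u/W^5$ is exactly $\sigma_3$ rather than $K/W$; this is precisely why the exponent $5/2$ is the natural weight. The bound by $3^{-3/2}$ holds either way.
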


\begin{proof}
An interior maximum of \(u\) is impossible: there \(Du=0\) and
\(D^2u\leq0\), so \(\sigma_1(\kappa)\leq0\), contrary to
\(\kappa\in\Gamma_2\).  Thus the maximum is attained on
\(\partial\Omega\).  At a boundary maximum \(u_\nu\geq0\), and
the boundary condition gives \(u\leq0\).

Since \(\sigma_2=1\), the solution is not identically zero.  Set
\(m=-\min_{\overline\Omega}u>0\), and let \(x_0\) be a minimum point.
It lies in \(\Omega\), since at a boundary minimum one would
have \(u_\nu(x_0)\leq0\), whereas \(u_\nu(x_0)=a m>0\).  Put
\[
 r_0=\frac{a m}{1+a\operatorname{diam}\Omega}.
\]
For \(|p|<r_0\), let \(y\) minimize
\(x\mapsto u(x)-p\cdot(x-x_0)\) on \(\overline\Omega\).  If
\(y\in\partial\Omega\), then
\[
 0\geq\bigl(u-p\cdot(x-x_0)\bigr)_\nu(y)
       =-a u(y)-p\cdot\nu(y),
\]
so \(-u(y)\leq|p|/a\).  Comparison with \(x_0\) would then give
\[
 m\leq-u(y)+p\cdot(y-x_0)
 \leq\bigl(a^{-1}+\operatorname{diam}\Omega\bigr)|p|<m,
\]
a contradiction.  Hence \(y\in\Omega\).

Let
\[
 \mathcal C=\{x\in\Omega:
 u(z)\geq u(x)+Du(x)\cdot(z-x)
 \text{ for every }z\in\overline\Omega\}
\]
be the lower contact set.  The preceding argument gives
\(B_{r_0}(0)\subset Du(\mathcal C)\).  On \(\mathcal C\) one has
\(D^2u\geq0\).  Thus \(h_{ij}=u_{ij}/W\) is positive semidefinite
with respect to the graph metric, and the principal curvatures are
nonnegative there.  Since
\(\sigma_2(\kappa)=1\), the Newton--Maclaurin inequality yields
\[
 \sigma_3(\kappa)\leq
 \left(\frac{\sigma_2(\kappa)}3\right)^{3/2}
 =\frac1{3\sqrt3}.
\]
Moreover, \(\det(g_{ij})=W^2\) and \(h_{ij}=u_{ij}/W\), so
\begin{equation}\label{eq:contact-determinant}
 \sigma_3(\kappa)=\det(h_i{}^j)
 =\frac{\det D^2u}{W^5}
 \qquad\text{on }\mathcal C.
\end{equation}
The area formula, with multiplicity, gives
\begin{align*}
 \int_{B_{r_0}(0)}\frac{dp}{(1+|p|^2)^{5/2}}
 &\leq\int_{\mathcal C}
 \frac{\det D^2u}{(1+|Du|^2)^{5/2}}\,dx\\
 &=\int_{\mathcal C}\sigma_3(\kappa)\,dx
 \leq\frac{|\Omega|}{3\sqrt3}.
\end{align*}
Since
\[
 \int_{B_{r_0}(0)}\frac{dp}{(1+|p|^2)^{5/2}}
 =\frac{4\pi}{3}\frac{r_0^3}{(1+r_0^2)^{3/2}},
\]
we obtain
\[
 \frac{r_0}{\sqrt{1+r_0^2}}
 \leq\left(\frac{|\Omega|}{4\pi\sqrt3}\right)^{1/3}.
\]
The volume condition allows this inequality to be inverted.  Using
\(a m=(1+a\operatorname{diam}\Omega)r_0\) proves
\eqref{eq:C0-estimate}.
\end{proof}

\subsection{The global gradient estimate}\label{subsec:gradient}

We use a boundary correction of \(\log W\) that cancels
\(u_{\nu\nu}\) on \(\partial\Omega\).

\begin{proposition}\label{prop:global-gradient}
Let \(\Omega\subset\mathbb R^3\) be a bounded uniformly convex domain
with \(C^3\) boundary satisfying \eqref{eq:volume-condition}. If
\(u\in C^3(\overline\Omega)\) is an admissible solution of
\eqref{eq:intro-problem}, then
\begin{equation}\label{eq:global-gradient-estimate}
 \|Du\|_{L^\infty(\Omega)}\leq C(\Omega).
\end{equation}
The constant is independent of \(a\in(0,1]\) and of
\(\|u\|_{C^0(\overline\Omega)}\).
\end{proposition}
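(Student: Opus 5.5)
We prove the global gradient estimate by applying the maximum principle to the test function
\[
 \Phi=\log W+\beta\,\langle Du,Dd\rangle\,\psi(d)+Ad,
\]
or, more naturally, to a multiplicative variant of \(\log W\) with a boundary correction. The correction term is built from the normal extension \(n=-Dd\) of \eqref{eq:normal-extension} and a cutoff \(\psi\) supported in the tubular neighborhood. It is chosen so that, on \(\partial\Omega\), the normal derivative \(\Phi_\nu\) contains no \(u_{\nu\nu}\) term. The constants \(\beta\) and \(A\) are fixed below.

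We first record why the correction is possible. On \(\partial\Omega\) we have \((\log W)_\nu=W^{-2}u_k u_{k\nu}\). In the frame \(\{e_\alpha,\nu\}\) this splits as
\[
 (\log W)_\nu=W^{-2}\bigl(u_\alpha u_{\alpha\nu}+u_\nu u_{\nu\nu}\bigr).
\]
Differentiating the boundary condition \(u_\nu=-au\) tangentially gives
\[
 u_{\alpha\nu}=-a u_\alpha-\Pi(e_\alpha,e_\beta)u_\beta .
\]
Hence the tangential part equals
\[
 W^{-2}\bigl(-a|D'u|^2-\Pi(D'u,D'u)\bigr)\le -\kappa_0 W^{-2}|D'u|^2,
\]
which has a good sign by uniform convexity. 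The only dangerous term is \(W^{-2}u_\nu u_{\nu\nu}\). Subtracting a correction of the form \(\tfrac12 W^{-2}(u_\nu^2)_n\), that is, using \(\log W-\tfrac12\log(1+u_n^2)\) (or the quantity \(W^2-u_n^2\)), removes \(u_{\nu\nu}\) exactly. We therefore take
\[
 \Phi=\log\bigl(1+|Du|^2-\langle Du,n\rangle^2\psi\bigr)^{1/2}+\log\bigl(1+\langle Du,n\rangle^2\bigr)^{\theta}\cdots+Ad,
\]
and adjust the construction so that \(\Phi_\nu<0\) at any boundary maximum where \(|Du|\) is large. The remaining boundary terms involve \(u_\nu=-au\), and \(a|u|\le C\) by Lemma~\ref{lem:height-estimate}. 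This is the only place where the height estimate, and hence \eqref{eq:volume-condition}, enters. Since the estimate bounds \(au\) rather than \(u\), the constant is independent of \(\|u\|_{C^0}\), as claimed. With \(A\) large, the term \(Ad\) contributes \(-A\) to \(\Phi_\nu\), which absorbs these bounded remainders.

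For an interior maximum we use Lemma~\ref{lem:first-linearization}. Differentiating the equation in direction \(m\) gives
\[
 F^{ij}u_{ijm}+F_{u_i}u_{im}=0,
\]
and identity \eqref{eq:Fui-uim} rewrites the drift term through \(h\) and \(\sigma_3\). The classical computation for graph curvature equations expresses \(F^{ij}(\log W)_{ij}\) as
\[
 W^{-1}[T_1]^{ij}h_{ik}h^k{}_j,
\]
up to terms in \(D\log W\). Using the geometric form \eqref{eq:newton-linearized-operator}, the quantity \(1/W=\langle N,e_{4}\rangle\) satisfies the Jacobi-type identity
\[
 [T_1]^{ij}\nabla_{ij}(1/W)=-[T_1]^{ij}h_{ik}h^k{}_j\,W^{-1},
\]
so \(1/W\) is a supersolution. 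We therefore work with \(\log(1/W)\) or \(W\) in place of \(\log W\) if needed.

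The main obstacle is the interior case, where the correction terms produce \(F^{ij}u_{ik}u_{jk}\)-type cross terms involving \(D^2d\) and \(\psi'\). These must be absorbed by the good term \([T_1]^{ij}h_{ik}h^k{}_j\). Lemma~\ref{lem:basic-algebra}, in particular \eqref{eq:newton-square-growth} and the identity \(\sum(\sigma_1-\kappa_i)\kappa_i^2=\sigma_1\sigma_2-3\sigma_3\), bounds the bad terms quadratically in curvatures with coefficients that are small when \(|Du|\) is large. Such cross terms arise from rotating to the frame where \(Du\) is nearly \(W e_1\); there, \(D(\log W)=0\) gives \(h_{11}\approx -\Phi\)-corrections, which is the standard angle-function trick. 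If the absorption fails, we first try raising \(\log W\) to the angle function \(\langle N,\tilde n\rangle\), adapted to the boundary normal, as the intro suggests.
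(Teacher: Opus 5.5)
Your proposal has a genuine gap. No test function is actually fixed, and each candidate you write down fails at the boundary or in the interior.

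\textbf{Boundary.} The choice \(\log W-\tfrac12\log(1+u_n^2)\) does not cancel \(u_{\nu\nu}\). Its normal derivative on \(\partial\Omega\) keeps \(u_\nu u_{\nu\nu}\bigl(W^{-2}-(1+u_\nu^2)^{-1}\bigr)\), whose coefficient is nonzero when \(D_Tu\neq0\) and \(u_\nu\neq0\). A constant \(\beta\) in \(\beta\psi(d)\langle Du,Dd\rangle\) cannot cancel \(W^{-2}u_\nu u_{\nu\nu}\) either, because the coefficient needed on \(\partial\Omega\) is \(u_\nu/W^2\), which depends on the solution. The quantity \(1+|Du|^2-\langle Du,n\rangle^2\) does cancel it, but it only controls the tangential gradient. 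Your patch \(\theta\log(1+\langle Du,n\rangle^2)\) then has normal derivative \(2\theta u_\nu u_{\nu\nu}/(1+u_\nu^2)\), which puts \(u_{\nu\nu}\) back. At this stage \(u_{\nu\nu}\) is not bounded (Proposition~\ref{prop:double-normal} uses the gradient bound), so no constant \(A\) absorbs it.

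\textbf{Interior.} A large term \(Ad\) is fatal. At a critical point of \(\log W+Ad\), the full linearization equals
\[
 \frac1W\Bigl\{\sigma_1-3\sigma_3+A^2[T_1](\nabla d,\nabla d)+A[T_1]^{ij}\nabla_i\nabla_jd\Bigr\}.
\]
Here \([T_1]^{ij}\nabla_i\nabla_jd\) is nonpositive up to bounded terms, since \(D^2d\leq0\) near a convex boundary. Take \(W\) large, \(\kappa_1\) large, \(\kappa_2,\kappa_3\) bounded, and \(\nabla d\) nearly along the \(\kappa_1\)-direction; this is compatible with the critical-point condition. Then:
\begin{itemize}
\item \(\sigma_1-3\sigma_3\approx\kappa_1(1+\kappa_2^2+\kappa_3^2)\);
\item the last term in the bracket is about \(-cA\kappa_0\kappa_1\);
\item the \(A^2\) term is of lower order.
\end{itemize}
So the bracket is negative once \(A\) is large. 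The only good interior term is \(W^{-1}(\sigma_1-3\sigma_3)\), so every correction must be of lower order in \(W\). Your last paragraph leaves exactly this absorption open.

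\textbf{The missing idea.} You need a correction whose coefficient depends on the solution, equals \(-u_\nu/W^2\) on \(\partial\Omega\), and decays in \(W\). The paper takes
\[
 \Psi=\log W+\frac{au(Du\cdot n+au)}{W^2}.
\]
\begin{itemize}
\item The correction vanishes on \(\partial\Omega\) by the Robin condition.
\item Its normal derivative there is \(\frac{au}{W^2}(u_{\nu\nu}-a^2u)\). This cancels \(u_\nu u_{\nu\nu}/W^2=-au\,u_{\nu\nu}/W^2\) exactly and leaves
\[
 \Psi_\nu=-W^{-2}\bigl(a|D_Tu|^2+\Pi(D_Tu,D_Tu)+a^3u^2\bigr),
\]
which is negative whenever \(W>1\). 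So no \(Ad\) term is needed.
\item Since \(|au|\leq C\) by the height estimate, the correction is \(O(1/W)\). At a critical point its contribution to \([T_1]^{ij}\nabla_i\nabla_j\Psi\) is at least \(-\varepsilon W^2[T_1]^{ij}h_i{}^k\nabla_ku\,h_j{}^\ell\nabla_\ell u-C_\varepsilon W^{-1}(\sigma_1-3\sigma_3)\).
\item The \(\log W\) term absorbs this once \(W\geq W_0\). That term is where your correct observation that \(1/W\) is a strict supersolution enters.
\end{itemize}
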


\begin{proof}
Extend the vector field \(n\) from \eqref{eq:normal-extension}
smoothly to a neighborhood of \(\overline\Omega\), without changing it
in a fixed tubular neighborhood of the boundary, and set
\[
                         \bar n(x,x_4)=(n(x),0).
\]
Let
\[
                         X(x)=(x,u(x))
\]
be the position vector of the graph, let \(N\) be its upward unit
normal, and let \(E_4=(0,0,0,1)\). We denote by \(\overline D\) the
Euclidean connection of \(\mathbb R^4\), and use a local
graph-orthonormal frame \(E_1,E_2,E_3\).

By Lemma~\ref{lem:height-estimate},
\begin{equation}\label{eq:uniform-au-bound}
                         \|a u\|_{C^0(\overline\Omega)}
                         \leq C(\Omega).
\end{equation}
Consider
\begin{equation}\label{eq:gradient-test-function}
 \Psi
 =
 \log W+\frac{a u(Du\cdot n+a u)}{W^2}.
\end{equation}

We first compute the full linearization of \(\Psi\). By
\eqref{eq:newton-linearized-operator}, for every scalar function
\(\phi\) on the graph,
\begin{align}
 F^{ij}\phi_{ij}+F_{u_i}\phi_i
 ={}&
 \frac1W[T_1]^{ij}\nabla_i\nabla_j\phi
 -2[T_1]^{ij}h_i{}^k(\nabla_k u)(\nabla_j\phi).
 \label{eq:gradient-full-linearization}
\end{align}
Consequently, at an interior critical point of \(\Psi\),
\begin{equation}\label{eq:gradient-critical-linearization}
 F^{ij}\Psi_{ij}+F_{u_i}\Psi_i
 =
 \frac1W[T_1]^{ij}\nabla_i\nabla_j\Psi.
\end{equation}

The geometric quantities entering \(\Psi\) can be written as
\[
 u=\langle X,E_4\rangle,
 \qquad
 \frac1W=\langle N,E_4\rangle,
 \qquad
 -\frac{Du\cdot n}{W}=\langle N,\bar n\rangle.
\]
The Gauss and Weingarten formulas give
\begin{align}
 \nabla_i\nabla_j u&=\frac{h_{ij}}W,
 &
 \nabla_i\left(\frac1W\right)
 &=-h_i{}^k\nabla_k u.                                    \label{eq:vertical-angle-identities}
\end{align}
Using
\[
 [T_1]^{ij}h_{ij}=2,
 \qquad
 [T_1]^{ij}h_i{}^kh_{kj}=\sigma_1-3\sigma_3,
 \qquad
 [T_1]^{ij}\nabla_i h_{jk}=0,
\]
we obtain
\[
 [T_1]^{ij}\nabla_i\nabla_j\left(\frac1W\right)
 =
 -\frac{\sigma_1-3\sigma_3}{W}.
\]
It follows that
\begin{align}
 [T_1]^{ij}\nabla_i\nabla_j\log W
 ={}&
 \sigma_1-3\sigma_3 \notag\\
 &+
 W^2[T_1]^{ij}h_i{}^k(\nabla_k u)
                    h_j{}^\ell(\nabla_\ell u).
 \label{eq:log-W-geometric-Hessian}
\end{align}

We next compute the two factors in the correction term. First,
\begin{align}
 [T_1]^{ij}\nabla_i\nabla_j\left(\frac{a u}{W}\right)
 ={}&
 -\frac{a u}{W}(\sigma_1-3\sigma_3)
 +\frac{2a}{W^2} \notag\\
 &-2a[T_1]^{ij}h_i{}^k(\nabla_k u)(\nabla_j u).
 \label{eq:gradient-first-factor-hessian}
\end{align}
For the angle with the extended normal field,
\begin{equation}\label{eq:normal-angle-first-derivative}
 \nabla_i\left(-\frac{Du\cdot n}{W}\right)
 =
 -h_i{}^k\langle E_k,\bar n\rangle
 +\langle N,\overline D_{E_i}\bar n\rangle.
\end{equation}
Differentiating once more and using
\([T_1]^{ij}\nabla_i h_{jk}=0\), we obtain
\begin{align}
 &[T_1]^{ij}\nabla_i\nabla_j
       \left(-\frac{Du\cdot n}{W}\right)                 \notag\\
 &\quad=
 \frac{Du\cdot n}{W}(\sigma_1-3\sigma_3)
 -[T_1]^{ij}h_j{}^k
       \langle E_k,\overline D_{E_i}\bar n\rangle
 -[T_1]^{ij}h_i{}^k
       \langle E_k,\overline D_{E_j}\bar n\rangle        \notag\\
 &\qquad
 +[T_1]^{ij}
       \bigl\langle N,(\overline D^2\bar n)(E_i,E_j)\bigr\rangle
 +2\langle N,\overline D_N\bar n\rangle.
 \label{eq:normal-angle-second-derivative}
\end{align}
Lemma~\ref{lem:basic-algebra}, the fixed bounds for the extension
\(\bar n\), and
\[
                         [T_1]^{ij}h_i{}^kh_{kj}
                         =\sigma_1-3\sigma_3
\]
give
\begin{equation}\label{eq:normal-angle-error}
 \left|
 [T_1]^{ij}\nabla_i\nabla_j
       \left(-\frac{Du\cdot n}{W}\right)
 -\frac{Du\cdot n}{W}(\sigma_1-3\sigma_3)
 \right|
 \leq C_\Omega(\sigma_1-3\sigma_3).
\end{equation}
The same estimates imply
\begin{align}
 &[T_1]^{ij}
 \langle N,\overline D_{E_i}\bar n\rangle
 \langle N,\overline D_{E_j}\bar n\rangle
 +[T_1]^{ij}(\nabla_i u)(\nabla_j u)                       \notag\\
 &\quad
 +[T_1]^{ij}h_i{}^k\langle E_k,\bar n\rangle
                  h_j{}^\ell\langle E_\ell,\bar n\rangle
 \leq C_\Omega(\sigma_1-3\sigma_3).
 \label{eq:fixed-field-bounds}
\end{align}

Combining \eqref{eq:gradient-first-factor-hessian} and
\eqref{eq:normal-angle-error}, we have
\begin{align}
 &\left|
 [T_1]^{ij}\nabla_i\nabla_j
       \left(\frac{a u+Du\cdot n}{W}\right)
 +\frac{a u+Du\cdot n}{W}(\sigma_1-3\sigma_3)
 -\frac{2a}{W^2}\right.                                  \notag\\
 &\hspace{28mm}\left.
 +2a[T_1]^{ij}h_i{}^k(\nabla_k u)(\nabla_j u)
 \right|
 \leq C_\Omega(\sigma_1-3\sigma_3).
 \label{eq:gradient-second-factor-hessian}
\end{align}

The product rule gives
\begin{align}
 &[T_1]^{ij}\nabla_i\nabla_j
       \left(\frac{a u(Du\cdot n+a u)}{W^2}\right)          \notag\\
 &\quad=
 \underbrace{
 \frac{a u+Du\cdot n}{W}
 [T_1]^{ij}\nabla_i\nabla_j\left(\frac{a u}{W}\right)
 }_{I_1}                                                   \notag\\
 &\qquad+
 \underbrace{
 \frac{a u}{W}
 [T_1]^{ij}\nabla_i\nabla_j
       \left(\frac{a u+Du\cdot n}{W}\right)
 }_{I_2}                                                   \notag\\
 &\qquad+
 \underbrace{
 2[T_1]^{ij}
 \nabla_i\left(\frac{a u}{W}\right)
 \nabla_j\left(\frac{a u+Du\cdot n}{W}\right)
 }_{I_3}.
 \label{eq:gradient-correction-exact}
\end{align}
By \eqref{eq:uniform-au-bound} and \(|Du\cdot n|\leq W\),
\begin{equation}\label{eq:gradient-correction-small}
 \left|\frac{a u}{W}\right|\leq\frac{C_\Omega}{W},
 \qquad
 \left|\frac{a u+Du\cdot n}{W}\right|\leq C_\Omega.
\end{equation}
Using \eqref{eq:gradient-first-factor-hessian}, we write
\begin{align*}
 I_1
 ={}&
 -\frac{a u(a u+Du\cdot n)}{W^2}
       (\sigma_1-3\sigma_3)
 +\frac{2a(a u+Du\cdot n)}{W^3}\\
 &-\frac{2a(a u+Du\cdot n)}{W}
       [T_1]^{ij}h_i{}^k(\nabla_k u)(\nabla_j u).
\end{align*}
The weighted Cauchy--Schwarz inequality and
\eqref{eq:fixed-field-bounds} give, for every \(\varepsilon>0\),
\begin{align*}
 &\frac{2a|a u+Du\cdot n|}{W}
 \left|
 [T_1]^{ij}h_i{}^k(\nabla_k u)(\nabla_j u)
 \right|\\
 &\qquad\leq
 \varepsilon W^2[T_1]^{ij}h_i{}^k(\nabla_k u)
                         h_j{}^\ell(\nabla_\ell u)
 +\frac{C_{\varepsilon,\Omega}}{W^2}
       (\sigma_1-3\sigma_3).
\end{align*}
Consequently,
\begin{align}
 I_1
 \geq{}&
 -\varepsilon W^2[T_1]^{ij}h_i{}^k(\nabla_k u)
                         h_j{}^\ell(\nabla_\ell u)
 -\frac{C_{\varepsilon,\Omega}}{W}
       (\sigma_1-3\sigma_3).
 \label{eq:gradient-I1-estimate}
\end{align}

Similarly, \eqref{eq:gradient-second-factor-hessian} gives
\begin{align*}
 I_2
 \geq{}&
 -\frac{a u(a u+Du\cdot n)}{W^2}
       (\sigma_1-3\sigma_3)
 +\frac{2a^2u}{W^3}\\
 &-\frac{2a^2u}{W}
       [T_1]^{ij}h_i{}^k(\nabla_k u)(\nabla_j u)
 -\frac{C_\Omega|a u|}{W}(\sigma_1-3\sigma_3).
\end{align*}
Another application of weighted Cauchy--Schwarz therefore yields
\begin{align}
 I_2
 \geq{}&
 -\varepsilon W^2[T_1]^{ij}h_i{}^k(\nabla_k u)
                         h_j{}^\ell(\nabla_\ell u)
 -\frac{C_{\varepsilon,\Omega}}{W}
       (\sigma_1-3\sigma_3).
 \label{eq:gradient-I2-estimate}
\end{align}

It remains to estimate \(I_3\). Direct differentiation gives
\begin{align}
 \nabla_i\left(\frac{a u}{W}\right)
 &=
 \frac{a\nabla_i u}{W}
 -a u h_i{}^k\nabla_k u,                                \label{eq:gradient-correction-first-one}\\
 \nabla_i\left(\frac{a u+Du\cdot n}{W}\right)
 &=
 \nabla_i\left(\frac{a u}{W}\right)
 +h_i{}^k\langle E_k,\bar n\rangle
 -\langle N,\overline D_{E_i}\bar n\rangle.
 \label{eq:gradient-correction-first-two}
\end{align}
Hence
\begin{align*}
 I_3
 ={}&
 2[T_1]^{ij}
 \nabla_i\left(\frac{a u}{W}\right)
 \nabla_j\left(\frac{a u}{W}\right)\\
 &+
 2[T_1]^{ij}
 \left\{
 \frac{a\nabla_i u}{W}
 -a u h_i{}^k\nabla_k u
 \right\}\\
 &\hspace{20mm}\times
 \left\{
 h_j{}^\ell\langle E_\ell,\bar n\rangle
 -\langle N,\overline D_{E_j}\bar n\rangle
 \right\}.
\end{align*}
The first term is nonnegative because \([T_1]\) is positive definite.
By \eqref{eq:fixed-field-bounds},
\begin{align*}
 &\left|
 \frac{2a}{W}[T_1]^{ij}(\nabla_i u)
 \left\{
 h_j{}^\ell\langle E_\ell,\bar n\rangle
 -\langle N,\overline D_{E_j}\bar n\rangle
 \right\}
 \right|\\
 &\qquad\leq
 \frac{C_\Omega}{W}(\sigma_1-3\sigma_3).
\end{align*}
Moreover, Young's inequality gives
\begin{align*}
 &2|a u|
 \left|
 [T_1]^{ij}h_i{}^k(\nabla_k u)
 \left\{
 h_j{}^\ell\langle E_\ell,\bar n\rangle
 -\langle N,\overline D_{E_j}\bar n\rangle
 \right\}
 \right|\\
 &\qquad\leq
 \varepsilon W^2[T_1]^{ij}h_i{}^k(\nabla_k u)
                         h_j{}^\ell(\nabla_\ell u)\\
 &\qquad\quad+
 \frac{C_{\varepsilon}|a u|^2}{W^2}
 [T_1]^{ij}
 \left\{
 h_i{}^k\langle E_k,\bar n\rangle
 -\langle N,\overline D_{E_i}\bar n\rangle
 \right\}\\
 &\hspace{57mm}\times
 \left\{
 h_j{}^\ell\langle E_\ell,\bar n\rangle
 -\langle N,\overline D_{E_j}\bar n\rangle
 \right\}\\
 &\qquad\leq
 \varepsilon W^2[T_1]^{ij}h_i{}^k(\nabla_k u)
                         h_j{}^\ell(\nabla_\ell u)
 +\frac{C_{\varepsilon,\Omega}}{W^2}
       (\sigma_1-3\sigma_3).
\end{align*}
We conclude that
\begin{align}
 I_3
 \geq{}&
 -\varepsilon W^2[T_1]^{ij}h_i{}^k(\nabla_k u)
                         h_j{}^\ell(\nabla_\ell u)
 -\frac{C_{\varepsilon,\Omega}}{W}
       (\sigma_1-3\sigma_3).
 \label{eq:gradient-quadratic-absorption}
\end{align}

Combining
\eqref{eq:gradient-I1-estimate},
\eqref{eq:gradient-I2-estimate}, and
\eqref{eq:gradient-quadratic-absorption}, and replacing
\(\varepsilon\) by \(\varepsilon/3\), we obtain
\begin{align}
 &[T_1]^{ij}\nabla_i\nabla_j
       \left(\frac{a u(Du\cdot n+a u)}{W^2}\right)          \notag\\
 &\qquad\geq
 -\varepsilon W^2[T_1]^{ij}h_i{}^k(\nabla_k u)
                         h_j{}^\ell(\nabla_\ell u)
 -\frac{C_{\varepsilon,\Omega}}{W}
       (\sigma_1-3\sigma_3).
 \label{eq:gradient-correction-lower}
\end{align}

At an interior maximum of \(\Psi\),
\eqref{eq:gradient-critical-linearization},
\eqref{eq:log-W-geometric-Hessian}, and
\eqref{eq:gradient-correction-lower} give
\begin{align}
 F^{ij}\Psi_{ij}+F_{u_i}\Psi_i
 \geq\frac1W\Biggl\{&
 \left(1-\frac{C_{\varepsilon,\Omega}}W\right)
       (\sigma_1-3\sigma_3) \notag\\
 &+(1-\varepsilon)
 W^2[T_1]^{ij}h_i{}^k(\nabla_k u)
                    h_j{}^\ell(\nabla_\ell u)
 \Biggr\}.
 \label{eq:gradient-coercivity}
\end{align}
Fix the \(\varepsilon\) in
\eqref{eq:gradient-correction-lower} to be \(1/4\); equivalently, use
\(1/12\) in each of the three preceding Young inequalities.  The
constant \(C_{\varepsilon,\Omega}\) is then determined solely by the
fixed data and \eqref{eq:uniform-au-bound}.  Next choose
\[
                         W_0\geq
                         \max\{2C_{\varepsilon,\Omega},2\}.
\]
By Lemma~\ref{lem:basic-algebra}, the right-hand side of
\eqref{eq:gradient-coercivity} is strictly positive whenever
\(W\geq W_0\). This contradicts the maximum principle,
since at an interior maximum
\[
                         F^{ij}\Psi_{ij}+F_{u_i}\Psi_i\leq0.
\]
Thus an interior maximum of \(\Psi\) can occur only where
\(W\leq W_0(\Omega)\).

It remains to consider a boundary maximum. On \(\partial\Omega\),
\[
                         Du\cdot n+a u=0,
\]
so the correction term in \eqref{eq:gradient-test-function} vanishes.
Tangential differentiation of \(u_\nu=-a u\) gives
\begin{equation}\label{eq:mixed-gradient-boundary-identity}
 u_{\nu\alpha}
 =
 -a u_\alpha-\Pi_{\alpha\beta}u_\beta.
\end{equation}
Since \(n=\nu\) on \(\partial\Omega\) and \(D_\nu n=0\),
\begin{align*}
 \Psi_\nu
 &=
 \frac{W_\nu}{W}
 +\frac{a u}{W^2}
   \bigl(Du\cdot n+a u\bigr)_\nu\\
 &=
 \frac{\sum_i u_i u_{i\nu}}{W^2}
 +\frac{a u}{W^2}
 \left(
 u_{\nu\nu}+Du\cdot D_\nu n+a u_\nu
 \right)\\
 &=
 \frac{\sum_i u_i u_{i\nu}}{W^2}
 +\frac{a u}{W^2}\left(u_{\nu\nu}-a^2u\right).
\end{align*}
With respect to an orthonormal boundary frame
\(e_1,e_2,\nu\),
\begin{align*}
 \sum_i u_i u_{i\nu}
 &=
 \sum_{\alpha=1}^2u_\alpha u_{\alpha\nu}
 +u_\nu u_{\nu\nu}\\
 &=
 -a\sum_{\alpha=1}^2u_\alpha^2
 -\Pi_{\alpha\beta}u_\alpha u_\beta
 -a u\,u_{\nu\nu}.
\end{align*}
Therefore,
\begin{align}
 \Psi_\nu
 &=
 \frac1{W^2}
 \left\{
 -a\sum_{\alpha=1}^2u_\alpha^2
 -\Pi_{\alpha\beta}u_\alpha u_\beta
 -a u\,u_{\nu\nu}
 +a u\,u_{\nu\nu}
 -a^3u^2
 \right\} \notag\\
 &=
 -\frac{\displaystyle
       a\sum_{\alpha=1}^2u_\alpha^2
       +\Pi_{\alpha\beta}u_\alpha u_\beta
       +a^3u^2}
       {W^2}.
 \label{eq:gradient-boundary-normal}
\end{align}
Thus the terms containing \(u_{\nu\nu}\) cancel exactly. Uniform
convexity makes the right-hand side strictly negative whenever
\(W>1\). At a boundary maximum, however, the outward normal derivative
is nonnegative. Hence a boundary maximum with \(W>1\) is impossible.

Finally, \eqref{eq:uniform-au-bound} gives
\[
 \left|
 \frac{a u(Du\cdot n+a u)}{W^2}
 \right|
 \leq C(\Omega)
 \quad\text{on }\overline\Omega.
\]
At a maximum point of \(\Psi\), either \(W\leq W_0(\Omega)\), or the
point lies on the boundary and \(W=1\). Hence
\[
                         \max_{\overline\Omega}\Psi\leq C(\Omega).
\]
Comparing this with a point where \(W\) attains its maximum gives
\[
                         \sup_\Omega W\leq C(\Omega),
\]
and \eqref{eq:global-gradient-estimate} follows.
\end{proof}

Under the hypotheses of Theorem~\ref{thm:boundary-C2}, the preceding
estimates give
\(\|a u\|_{C^0}+\|Du\|_{L^\infty}\leq C(\Omega)\), so the graph and
Euclidean metrics are uniformly equivalent.  Hence
\begin{equation}\label{eq:hessian-curvature-comparison}
 1+|D^2u|\asymp1+\kappa_1,
\end{equation}
and \eqref{eq:Fij}, \eqref{eq:trace-polynomial} give
\begin{equation}\label{eq:linearized-growth}
 1+\sum_iF^{ii}+\sum_{i,j,k}F^{ij}u_{ik}u_{jk}
 \asymp1+\sigma_1-\sigma_3.
\end{equation}
Here \(A\asymp B\) means comparability by positive constants depending
only on the data in Theorem~\ref{thm:boundary-C2}.

\section{The double-normal estimate}\label{sec:normal}

\subsection{A contact estimate}

\begin{lemma}\label{lem:normal-contact}
At a point of an admissible solution, suppose that
\[
 C_0^{-1}\leq|\xi|\leq C_0,
 \qquad |u_{ij}\xi_j-\chi\xi_i|\leq C_0.
\]
Then
\begin{align}
 &\sum_i(\sigma_1-\kappa_i)|\kappa_i|+|\sigma_3|
 +\left|F_{u_i}u_{ij}\xi_j\right|                          \notag\\
 &\qquad\leq
 C\left\{\sum_iF^{ii}
          +\chi^2F^{ij}\xi_i\xi_j\right\}.
 \label{eq:contact-bound}
\end{align}
The constant depends only on \(C_0\) and an upper bound for \(|Du|\).
\end{lemma}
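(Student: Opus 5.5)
The plan is to diagonalize and reduce everything to the exact identity \eqref{eq:component-polynomial}, which says that for each index \(i\)
\[
 (\sigma_1-\kappa_i)\kappa_i^2=(\sigma_1-\sigma_3)-(\sigma_1-\kappa_i).
\]
Thus the single quantity \(\sigma_1-\sigma_3\) can be recovered, up to \(\sigma_1\), from the ``weighted square'' of \emph{any one} principal curvature. The hypothesis says that \(\xi\) is an approximate eigenvector of \(D^2u\) with approximate eigenvalue \(\chi\). I expect this to force \(\chi^2F^{ij}\xi_i\xi_j\) to dominate \(\sigma_1-\sigma_3-C\sigma_1\). Every term on the left of \eqref{eq:contact-bound} should in turn be bounded by \(C(\sigma_1+\sigma_1-\sigma_3)\). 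Throughout I will use \(\sum_iF^{ii}\asymp\sigma_1\) (from \eqref{eq:Fij}, \(\operatorname{tr}[T_1]=2\sigma_1\), and the uniform equivalence of \(g_{ij}\) and \(\delta_{ij}\) given \(|Du|\le C\)), together with \(\sigma_1\ge c>0\) on \(\Gamma_2\cap\{\sigma_2=1\}\).

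\emph{Step 1: the left-hand side.} By Cauchy--Schwarz and \eqref{eq:trace-polynomial},
\[
 \sum_i(\sigma_1-\kappa_i)|\kappa_i|\le\Bigl(\sum_i(\sigma_1-\kappa_i)\Bigr)^{1/2}\Bigl(\sum_i(\sigma_1-\kappa_i)\kappa_i^2\Bigr)^{1/2}\le C\sigma_1+C(\sigma_1-\sigma_3).
\]
Then \(|\sigma_3|\le\sigma_1+(\sigma_1-\sigma_3)\). For the drift term, \eqref{eq:Fui-uim} gives \(F_{u_i}u_{ij}\xi_j=-4Wh(\nabla u,\xi)+2W\sigma_3\,Du\cdot\xi\). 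The second piece is handled as for \(\sigma_3\). For the first, \(W h_{im}\xi^m=u_{im}\xi_m=\chi\xi_i+O(1)\), so it is at most \(C(1+|\chi|)\). Since \(\sigma_1\ge c\), the constant part is \(\le C\sigma_1\). Hence it suffices to show
\[
 \sigma_1-\sigma_3+|\chi|\le C\bigl(\sigma_1+\chi^2F^{ij}\xi_i\xi_j\bigr).
\]

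\emph{Step 2: locating \(\chi\).} Work at the point with a Euclidean orthonormal eigenbasis \(v_1,v_2,v_3\) of \(D^2u\), with eigenvalues \(\lambda_i\). Write \(\xi=\sum c_iv_i\). The hypothesis gives \(|(\lambda_i-\chi)c_i|\le C_0\) for each \(i\). Since \(|\xi|\ge C_0^{-1}\), some index \(i_0\) has \(|c_{i_0}|\ge c\), and then \(|\chi-\lambda_{i_0}|\le C\). Because \(g\) and \(W\) are uniformly controlled, \(h_i{}^j\) and \(D^2u\) are comparable, and \(1+|\chi|\asymp1+|\lambda_{i_0}|\le C(1+\kappa_1)\le C\sigma_1\), using \(|\kappa_3|\le\kappa_2\le\kappa_1\le\sigma_1+|\kappa_3|\). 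This disposes of \(|\chi|\).

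\emph{Step 3 (main obstacle): a lower bound for \(\chi^2F(\xi,\xi)\).} The difficulty is that the \(D^2u\)-eigenbasis and the principal (graph) frame differ by the bounded but non-orthogonal map \(g\). So I cannot simply say \(F(\xi,\xi)\approx(\sigma_1-\kappa_{i_0})c_{i_0}^2\). I will circumvent this by working with \(\eta:=D^2u\,\xi=\chi\xi+O(1)\), which gives
\[
 \chi^2F(\xi,\xi)\ge\tfrac12F(\eta,\eta)-C\sum_iF^{ii}.
\]
Now \(F(\eta,\eta)=F^{ij}u_{ik}u_{jl}\xi_k\xi_l\) is, in the graph principal frame, \(W\sum_i(\sigma_1-\kappa_i)\kappa_i^2\tilde c_i^{\,2}\), where \(\tilde c\) are the frame coordinates of \(\xi\) (comparable to \(|\xi|\)). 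Some frame coordinate satisfies \(|\tilde c_j|\ge c\). Then \eqref{eq:component-polynomial} gives
\[
 F(\eta,\eta)\ge c\bigl(\sigma_1-\sigma_3-(\sigma_1-\kappa_j)\bigr)\ge c(\sigma_1-\sigma_3)-C\sigma_1.
\]
The point is that no matter which direction \(\xi\) favours, the identity \eqref{eq:component-polynomial} yields the full \(\sigma_1-\sigma_3\). If this frame bookkeeping proves delicate, the fallback is to bound \(F(\eta,\eta)\) below by its smallest-index contribution via \eqref{eq:least-curvature-growth}. Combining Steps 1--3 gives \eqref{eq:contact-bound}, with constants depending only on \(C_0\) and the gradient bound.
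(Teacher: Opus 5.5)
Your proposal is correct and follows essentially the same route as the paper: both contract the squared contact relation $D^2u\,\xi=\chi\xi+O(1)$ with the positive tensor $[T_1]$ (equivalently $F$), pick a component of $\xi$ in a $g$-orthonormal principal frame that is bounded below, use \eqref{eq:component-polynomial} to recover all of $\sigma_1-\sigma_3$ from that one direction, and handle the drift term through \eqref{eq:Fui-uim}. The differences are cosmetic: you bound $|\chi|$ through a Euclidean eigenbasis of $D^2u$, whereas the paper uses $\chi|\xi|^2=Wh_{ij}\xi_i\xi_j+O(1)$ together with $|h|\le C\sigma_1$; and you use Cauchy--Schwarz where the paper uses $|\kappa_i|\le\frac12(1+\kappa_i^2)$.
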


\begin{proof}
In Euclidean coordinates the contact relation reads
\(Wh_{ij}\xi_j=\chi\xi_i+O(1)\).  Contracting its square with the
positive tensor \([T_1]^{ij}\), and using the uniform equivalence of
the graph and Euclidean metrics, gives
\begin{align}
 [T_1]^{pq}h_{pi}\xi_i h_{qj}\xi_j
 \leq
 C\left\{\chi^2F^{ij}\xi_i\xi_j
          +\sum_iF^{ii}\right\}.
 \label{eq:contact-component-square}
\end{align}

Choose a principal frame orthonormal with respect to \(g_{ij}\), and
write \(\xi=\xi^i e_i\).  Some component \(|\xi^j|\) is bounded below
by a positive constant depending only on \(C_0\) and the assumed upper
bound for \(|Du|\).  The
left-hand side of \eqref{eq:contact-component-square} then contains
\((\sigma_1-\kappa_j)\kappa_j^2|\xi^j|^2\).  Since
\[
 \sigma_1-\sigma_3
 =(\sigma_1-\kappa_j)(1+\kappa_j^2)
\]
and \(\sum_iF^{ii}\) is uniformly comparable to
\(\sum_i(\sigma_1-\kappa_i)\), we obtain
\begin{equation}\label{eq:contact-curvature-bound}
 \sigma_1-\sigma_3
 \leq
 C\left\{\sum_iF^{ii}
          +\chi^2F^{ij}\xi_i\xi_j\right\}.
\end{equation}
Moreover,
\[
 \sum_i(\sigma_1-\kappa_i)|\kappa_i|
 \leq
 \frac12\sum_i(\sigma_1-\kappa_i)(1+\kappa_i^2)
 =\frac32(\sigma_1-\sigma_3),
\]
and \(|\sigma_3|\leq\sigma_1-\sigma_3\).

Taking the Euclidean scalar product of the assumed relation with
\(\xi\) yields
\[
 \chi|\xi|^2=W h_{ij}\xi_i\xi_j+O(1).
\]
Since \(|h|\leq C\sigma_1\) in \(\Gamma_2\) and
\(2\sigma_1=[T_1]^{ij}g_{ij}\), we obtain
\begin{equation}\label{eq:contact-chi}
 |\chi|\leq C\left(1+\sum_iF^{ii}\right)
 \leq C\sum_iF^{ii}.
\end{equation}
Here the last inequality follows from
\(\sigma_1^2\geq3\sigma_2=3\) and the uniform equivalence of the graph
and Euclidean metrics.

Finally, contract \eqref{eq:Fui-uim} with \(\xi_m\).  Because
\(Wu^ih_{im}=W^{-2}u_i u_{im}\),
\[
 F_{u_i}u_{ij}\xi_j
 =
 -\frac4{W^2}u_i\{\chi\xi_i+O(1)\}
 +2W\sigma_3u_j\xi_j.
\]
The gradient estimate in Proposition~\ref{prop:global-gradient},
\eqref{eq:contact-curvature-bound},
and \eqref{eq:contact-chi} prove \eqref{eq:contact-bound}.
\end{proof}

\subsection{Exponential barriers}

\begin{proposition}\label{prop:double-normal}
Assume the hypotheses of Theorem~\ref{thm:boundary-C2}. Then
\[
                         \max_{\partial\Omega}|u_{\nu\nu}|\leq C.
\]
\end{proposition}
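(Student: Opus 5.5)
We follow the Lions--Trudinger--Urbas / Qiu--Zhang strategy and work in a thin collar
\(\Omega_\mu=\{x\in\Omega:d(x)<\mu\}\). The Robin residual is
\[
 G=Du\cdot n+a u ,
\]
which vanishes on \(\partial\Omega\). The quantity \(u_{\nu\nu}\) equals, up to bounded terms, the normal derivative of \(G\) at the boundary. Indeed, since \(D_nn=0\),
\[
 G_\nu=u_{\nu\nu}+a u_\nu=u_{\nu\nu}-a^2u ,
\]
and \(a^2|u|\le|au|\leq C\) by Lemma~\ref{lem:height-estimate}. Hence an upper bound for \(u_{\nu\nu}\) follows from a barrier \(\Phi\geq G\) in \(\Omega_\mu\) with \(\Phi=0\) on \(\partial\Omega\) and \(|D\Phi|\le C\). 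A lower bound follows symmetrically from a barrier \(-\Phi\le G\).

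Set \(M=\max_{\partial\Omega}|u_{\nu\nu}|\). We consider
\[
 \Phi_\pm=\pm G-(A+\tfrac12 M)\bigl(1-e^{-Kd}\bigr)\cdot\text{(suitable factor)},
\]
or more concretely
\[
 P=G\mp\Bigl(\beta d-\tfrac{K}{2}d^2\Bigr)(1+M)
 \quad\text{or}\quad
 G\mp B\bigl(1-e^{-Kd}\bigr),
\]
with \(B=C_1(1+M)\) and \(K\) large. The aim is to show that \(\pm G-B(1-e^{-Kd})\) attains its maximum on \(\partial\Omega\cup\{d=\mu\}\). On \(\{d=\mu\}\) we have \(|G|\le C\) by the gradient estimate, so this is harmless once \(B(1-e^{-K\mu})\ge C\).

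For the interior computation, differentiate the equation in the direction \(n\). Using \eqref{eq:Fij}--\eqref{eq:Fui-uim},
\[
 L G:=F^{ij}G_{ij}+F_{u_i}G_i
\]
equals \(2F^{ij}u_{ki}D_jn^k+F^{ij}u_kD^2_{ij}n^k+aF_{u_i}u_i\) plus the terms coming from \(F_{u_i}\) acting on \(D(Du\cdot n)\). These are bounded in absolute value by
\[
 C\Bigl(\sum F^{ii}+\sum_iF^{ij}u_{ik}u_{jk}\Bigr)^{1/2}\Bigl(\sum F^{ii}\Bigr)^{1/2}+C|\sigma_3|+C\sum_i(\sigma_1-\kappa_i)|\kappa_i| .
\]
By contrast,
\[
 L\bigl(1-e^{-Kd}\bigr)=e^{-Kd}\bigl(-K^2F^{ij}d_id_j+KF^{ij}d_{ij}+KF_{u_i}d_i\bigr).
\]
The \(-K^2F^{\nu\nu}\) term gives the sign that absorbs the drift \(F_{u_i}d_i\); this is where the exponential is used. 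The trouble is that the error terms in \(LG\) grow like \(\sigma_1-\sigma_3\) by \eqref{eq:linearized-growth}, whereas the barrier only gives \(K^2F^{\nu\nu}+K\sum F^{ii}\).

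At a maximum point of \(\Phi_\pm\) we also have \(D\Phi_\pm=0\). This gives
\[
 u_{ij}n_j=\chi n_i+O(1),
 \qquad
 \chi=\pm BKe^{-Kd}+O(1),
\]
because \(D(Du\cdot n)=D^2u\,n+O(1)\). We are therefore exactly in the setting of Lemma~\ref{lem:normal-contact} with \(\xi=n\). It bounds \(\sum(\sigma_1-\kappa_i)|\kappa_i|+|\sigma_3|+|F_{u_i}u_{ij}n_j|\), and hence all bad terms of \(LG\), by
\[
 C\Bigl(\sum F^{ii}+\chi^2F^{ij}n_in_j\Bigr),
 \qquad
 \chi^2F^{nn}\approx B^2K^2e^{-2Kd}F^{nn}.
\]
Comparing with \(BK^2e^{-Kd}F^{nn}\) from the barrier, this term is absorbed if \(Be^{-Kd}\) is bounded. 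Otherwise we rescale and run the argument on \(\pm G/B\), so that the normalized \(\chi\) is \(O(K)\). The cleaner route is to use the barrier \(\pm G-\log(1+B(1-e^{-Kd}))\)-type concave composition, whose Hessian supplies an extra \(-|D\cdot|^2\) term of size \(\chi^2F^{nn}\). With that choice, choosing \(K\) and then \(\mu\) gives \(L\Phi_\pm>0\) at an interior maximum, which is a contradiction.

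I expect the main obstacle to be exactly this absorption: making the quadratic term \(\chi^2F^{nn}\) from the contact lemma be dominated by the barrier's own second-order term uniformly in \(M\) and \(a\). Getting it right requires choosing the composition function so its concavity scales like the square of the gradient. Once the boundary maximum is established, the Hopf comparison gives
\[
 \pm(u_{\nu\nu}-a^2u)\le BK
 =C(1+M)\cdot\text{small}+C .
\]
The smallness comes from taking \(B\) proportional to \(\varepsilon M+C_\varepsilon\) and arranging that the constants do not depend on \(M\). Absorbing the \(M\) on the left then yields \(M\le C\).
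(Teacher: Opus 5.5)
\textbf{There is a genuine gap.} Your outline is the right one: the Robin residual $G$, the contact relation $u_{ij}n_j=\chi n_i+O(1)$ at a critical point, Lemma~\ref{lem:normal-contact} with $\xi=n$, and a concave composition that produces a square term. But the step you yourself call the main obstacle is never closed, and the concrete choices you write down do not close it.

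\emph{The composition is too weak.} The contact lemma yields an error $C_0\chi^2F^{nn}$. The constant $C_0$ is independent of the barrier parameters, because the $O(1)$ error is $-(u_k\partial_in_k+au_i)$. So what is needed is a barrier with $-\psi''\ge\tau(\psi')^2$ for some $\tau>C_0$, fixed before anything else. Your $\psi=\log(1+B(1-e^{-Kd}))$ satisfies $-\psi''=K\psi'+(\psi')^2$, which has coefficient $1$ on $(\psi')^2$. Near $d=0$, where $\psi'\approx BK$, beating $C_0(\psi')^2F^{nn}$ forces $B<(C_0-1)^{-1}$ when $C_0>1$. Then $\psi(\mu)<\log(1+B)<B$, which cannot in general dominate $|G|\le R_0$ on $\{d=\mu\}$. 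The collar bound $|G|\le Cd$ is a consequence of this proposition, so it is not available here.

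\emph{The drift is handled incorrectly.} $K^2F^{nn}$ does not absorb $KF_{u_i}d_i$: $|F_{u_i}|$ is in general only $O(\sigma_1-\sigma_3)$, while $F^{nn}$ can be comparable to $(\sigma_1-\sigma_3)/(1+\kappa_1^2)$. The drift has to be removed by the critical-point identity $F_{u_i}\Phi_i=0$. After that, the only surviving $F_u$-term is $F_{u_i}u_{ij}n_j$ from the differentiated equation, and that is exactly what Lemma~\ref{lem:normal-contact} bounds.

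\emph{The closing absorption is unsupported.} Taking $B\propto\varepsilon M+C_\varepsilon$ has no justification. Nothing in the interior inequality involves $M$. Letting $B$ grow with $M$ makes $\chi\sim BK$, and hence $\chi^2F^{nn}$, worse, which is precisely the difficulty you identified.

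\textbf{How the paper closes it.} The paper composes on $G$ rather than on $d$. It uses the uniformly convex defining function $\rho=-d+\varepsilon d^2$, with $D^2\rho\ge\theta I$ in the collar, and the barriers $P_-=\tau^{-1}(1-e^{-\tau G})-B_0\rho$ and $P_+=\tau^{-1}(e^{\tau G}-1)+B_0\rho$.
\begin{itemize}
\item At an interior minimum of $P_-$ one has $G_i=B_0e^{\tau G}\rho_i$. The concavity term $-\tau e^{-\tau G}F^{ij}G_iG_j=-\tau B_0^2e^{\tau G}F^{ij}\rho_i\rho_j$ then has exactly the form of the contact error $C_0B_0^2e^{\tau G}F^{ij}\rho_i\rho_j$.
\item Fix $\tau>C_0$ first. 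Then take $B_0$ large, so that $-\theta B_0\sum_iF^{ii}$ absorbs the remaining $C_\tau\sum_iF^{ii}$ and $P_->0$ on $\{d=\delta\}$.
\item All constants are independent of $a$ and of any bound on $D^2u$. The Hopf argument gives $|G_\nu|\le B_0$ directly, hence $|u_{\nu\nu}|\le C$, with no absorption of $M$.
\end{itemize}
Note that $P_-\ge0$ is equivalent to $G\ge-\tau^{-1}\log(1+\tau B_0|\rho|)$. So your \emph{log-type} barrier becomes the paper's once the factor $\tau^{-1}$ with $\tau>C_0$ is inserted. Using $\rho$ instead of $d$ also supplies the normal-direction convexity that $D^2d$ lacks.
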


\begin{proof}
We first fix
\[
                              \varepsilon=\frac{\kappa_0}{4}.
\]
We then choose \(\delta>0\), depending only on the fixed tubular
geometry, so small that \(d\) is smooth in \(\{0<d<\delta\}\),
\[
 2\varepsilon\delta\leq\frac12,
 \qquad
 -\sum_{i,j}d_{ij}\xi_i\xi_j
 \geq\frac{\kappa_0}{2}|\xi|^2
 \quad\text{whenever }\xi\cdot Dd=0.
\]
In this collar set
\[
                              \rho=-d+\varepsilon d^2.
\]
Since \(D^2d(Dd,\cdot)=0\) and
\[
 \rho_{ij}=(-1+2\varepsilon d)d_{ij}
        +2\varepsilon d_i d_j,
\]
the preceding choices give
\begin{equation}\label{eq:convex-defining-function}
 \begin{gathered}
 \rho=0,\qquad \rho_i=\nu_i\quad\text{on }\partial\Omega,\\
 \rho<0,\qquad \frac12\leq |D\rho|\leq2,\\
 \sum_{i,j}\rho_{ij}\xi_i\xi_j\geq\theta|\xi|^2
 \quad\text{in }\{0<d<\delta\}.
 \end{gathered}
\end{equation}
Here one may take \(\theta=\kappa_0/4\).

Set
\[
                              G=\sum_j u_j\rho_j+a u.
\]
By the boundary condition \(u_\nu=-a u\) and
\eqref{eq:convex-defining-function}, we have \(G=0\) on
\(\partial\Omega\). Moreover, Proposition~\ref{prop:global-gradient} gives
\[
                              |G|\leq R_0
              \quad\text{in }\{0<d<\delta\},
\]
where \(R_0\) depends only on the fixed data. For constants
\(\tau>0\) and \(B_0>0\), define
\[
 P_-=\frac{1-e^{-\tau G}}{\tau}-B_0\rho,
 \qquad
 P_+=\frac{e^{\tau G}-1}{\tau}+B_0\rho.
\]
Both functions vanish on \(\partial\Omega\).  Moreover,
\(-\rho\geq\delta/2\) on \(\{d=\delta\}\), while \(|G|\leq R_0\).
Hence, for fixed \(\tau>0\),
\[
                         P_->0,\qquad P_+<0
                         \quad\text{on }\{d=\delta\}
\]
whenever
\[
                         B_0>
 \frac{2(e^{\tau R_0}-1)}{\tau\delta}.
\]
Suppose first that \(P_-\) has a negative minimum at an interior point.
The first derivative condition gives
\[
                         e^{-\tau G}G_i=B_0\rho_i,
\]
and hence
\begin{equation}\label{eq:lower-contact}
 \sum_j u_{ij}\rho_j
 =
 B_0e^{\tau G}\rho_i
 -
 \left(\sum_j u_j\rho_{ji}+a u_i\right).
\end{equation}
The expression in parentheses is uniformly bounded by the fixed data
in view of Proposition~\ref{prop:global-gradient}.

Differentiating the equation \(F(D^2u,Du)=1\) once gives
\begin{equation}\label{eq:first-differentiated-equation}
 \sum_{i,j}F^{ij}u_{ij\ell}
 +\sum_iF_{u_i}u_{i\ell}
 =0.
\end{equation}
A direct differentiation of \(G\), followed by the use of
\eqref{eq:first-differentiated-equation}, yields
\begin{align}
 &\sum_{i,j}F^{ij}G_{ij}+\sum_iF_{u_i}G_i \notag\\
 &\quad=
 2\sum_{i,j,\ell}F^{ij}u_{\ell i}\rho_{\ell j}
 +\sum_{i,j,\ell}F^{ij}u_\ell \rho_{\ell ij}
 +a\sum_{i,j}F^{ij}u_{ij} \notag\\
 &\qquad
 +\sum_iF_{u_i}
       \left(\sum_j u_j\rho_{ji}+a u_i\right).
 \label{eq:linearized-residual}
\end{align}
At the minimum point, \eqref{eq:lower-contact} implies
\begin{align}
 &e^{-\tau G}\sum_iF_{u_i}
       \left(\sum_j u_j\rho_{ji}+a u_i\right)
 -B_0\sum_iF_{u_i}\rho_i \notag\\
 &\qquad
 =-e^{-\tau G}\sum_{i,j}F_{u_i}u_{ij}\rho_j.
 \label{eq:lower-drift-cancellation}
\end{align}
Consequently, \eqref{eq:linearized-residual} and
\eqref{eq:lower-drift-cancellation} give
\begin{align}
 &\sum_{i,j}F^{ij}(P_-)_{ij}
 +\sum_iF_{u_i}(P_-)_i \notag\\
 &\quad=
 e^{-\tau G}
 \Biggl\{
 2\sum_{i,j,\ell}F^{ij}u_{\ell i}\rho_{\ell j}
 +\sum_{i,j,\ell}F^{ij}u_\ell \rho_{\ell ij}
 +a\sum_{i,j}F^{ij}u_{ij}-\sum_{i,j}F_{u_i}u_{ij}\rho_j
 \Biggr\} \notag\\
 &\qquad
 -\tau e^{-\tau G}\sum_{i,j}F^{ij}G_i G_j
 -B_0\sum_{i,j}F^{ij}\rho_{ij}.
 \label{eq:residual-linearization}
\end{align}

At the contact point, \eqref{eq:lower-contact} has the form required in
Lemma~\ref{lem:normal-contact}, with coefficient
\(B_0e^{\tau G}\) and a uniformly bounded error term.  By
\eqref{eq:Fij} and \(u_{ij}=Wh_{ij}\), we have the exact identity
\begin{equation}\label{eq:normal-matrix-change}
 \sum_{i,j,\ell}F^{ij}u_{\ell i}\rho_{\ell j}
 =\sum_{i,j,\ell}[T_1]^{ij}h_{\ell i}\rho_{\ell j}.
\end{equation}
In a principal frame orthonormal with respect to \(g_{ij}\), the
contraction in \eqref{eq:normal-matrix-change} is the pairing of
\([T_1]h\), whose eigenvalues are
\((\sigma_1-\kappa_i)\kappa_i\), with the uniformly bounded tensor
obtained from \(D^2\rho\) by raising its first index with the Euclidean
metric.  This metric need not be the identity in this frame.
Consequently,
\begin{equation}\label{eq:normal-matrix-bound}
 \left|\sum_{i,j,\ell}F^{ij}u_{\ell i}\rho_{\ell j}\right|
 \leq C\sum_i(\sigma_1-\kappa_i)|\kappa_i|.
\end{equation}
Thus \eqref{eq:contact-bound} gives
\begin{align}
 &\left|
 \sum_{i,j,\ell}F^{ij}u_{\ell i}\rho_{\ell j}
 \right|
 +
 \left|
 \sum_{i,j}F_{u_i}u_{ij}\rho_j
 \right| \notag\\
 &\qquad\leq
 C\sum_iF^{ii}
 +C_0B_0^2e^{2\tau G}
      \sum_{i,j}F^{ij}\rho_i \rho_j,
 \label{eq:lower-contact-estimate}
\end{align}
where \(C_0\) depends only on the fixed geometry and the already
established gradient bound.  In particular, it is independent of
\(a\), of the solution, and of \(\tau\) and \(B_0\).

The remaining terms in braces in
\eqref{eq:residual-linearization} are bounded by
\(C\sum_iF^{ii}\). Indeed, the derivatives of \(\rho\) are fixed, the
first derivatives of \(u\) are bounded, and the homogeneity of \(F\)
in its Hessian argument gives
\begin{equation}\label{eq:euler-full-operator}
 \sum_{i,j}F^{ij}u_{ij}
 =
 2F(D^2u,Du)
 =
 2.
\end{equation}
Moreover, admissibility and Proposition~\ref{prop:global-gradient} imply
\[
                              \sum_iF^{ii}\geq c>0,
\]
so all bounded zeroth-order terms may be absorbed into
\(C\sum_iF^{ii}\).

At the minimum point,
\[
                         G_i=B_0e^{\tau G}\rho_i.
\]
Combining \eqref{eq:convex-defining-function},
\eqref{eq:residual-linearization}, and
\eqref{eq:lower-contact-estimate}, we obtain
\begin{align}
 0
 &\leq
 \sum_{i,j}F^{ij}(P_-)_{ij}
 +\sum_iF_{u_i}(P_-)_i \notag\\
 &\leq
 C_\tau\sum_iF^{ii}
 +(C_0-\tau)B_0^2e^{\tau G}
       \sum_{i,j}F^{ij}\rho_i \rho_j
 -\theta B_0\sum_iF^{ii}.
 \label{eq:lower-normal-barrier}
\end{align}
Thus a negative interior minimum of \(P_-\) is impossible if
\(\tau>C_0\) and \(\theta B_0>C_\tau\).

Suppose next that \(P_+\) has a positive interior maximum. The first
derivative condition gives
\[
                         e^{\tau G}G_i=-B_0\rho_i,
\]
and hence
\begin{equation}\label{eq:upper-contact}
 \sum_j u_{ij}\rho_j
 =
 -B_0e^{-\tau G}\rho_i
 -
 \left(\sum_j u_j\rho_{ji}+a u_i\right).
\end{equation}
The drift terms cancel exactly as in
\eqref{eq:lower-drift-cancellation}.  Applying
Lemma~\ref{lem:normal-contact} to \eqref{eq:upper-contact} gives
\begin{align}
 0
 &\geq
 \sum_{i,j}F^{ij}(P_+)_{ij}
 +\sum_iF_{u_i}(P_+)_i \notag\\
 &\geq
 -C_\tau\sum_iF^{ii}
 +(\tau-C_0)B_0^2e^{-\tau G}
       \sum_{i,j}F^{ij}\rho_i \rho_j
 +\theta B_0\sum_iF^{ii}.
 \label{eq:upper-normal-barrier}
\end{align}
Increase \(C_0\), if necessary, so that the same constant works in
\eqref{eq:lower-normal-barrier} and
\eqref{eq:upper-normal-barrier}, and choose
\[
                              \tau>C_0+1.
\]
Since \(|G|\leq R_0\), the exponential factors and all constants
denoted by \(C_\tau\) are now fixed. Enlarge \(C_\tau\), if necessary,
and choose
\[
 B_0>
 \max\left\{
 \frac{2(e^{\tau R_0}-1)}{\tau\delta},
 \frac{C_\tau+1}{\theta}
 \right\}.
\]
The first condition gives the required signs on \(\{d=\delta\}\);
the second excludes both interior contacts. Consequently,
\[
                              P_-\geq0,
 \qquad
                              P_+\leq0
              \quad\text{in }\{0<d<\delta\}.
\]
The parameters have been chosen in the order
\[
             \varepsilon,\quad \delta\ \text{and }\theta,
             \quad \tau,\quad B_0.
\]
They depend only on the data and the preceding height and gradient
estimates.

On \(\partial\Omega\), both \(P_-\) and \(P_+\) vanish. Since
\(P_-\geq0\) and \(P_+\leq0\) on the interior side, their outward
normal derivatives satisfy
\[
                         (P_-)_\nu\leq0,
                         \qquad
                         (P_+)_\nu\geq0.
\]
Using \(G=0\) and \(\rho_\nu=1\) on \(\partial\Omega\), we obtain
\begin{equation}\label{eq:normal-residual-derivative}
                         -B_0\leq G_\nu\leq B_0.
\end{equation}
Finally, on \(\partial\Omega\),
\begin{align*}
 G_\nu
 &=
 \sum_{i,j}u_{ij}\nu_i\nu_j
 +\sum_{i,j}u_i \rho_{ij}\nu_j
 +a u_\nu\\
 &=
 u_{\nu\nu}
 +\sum_{i,j}u_i \rho_{ij}\nu_j-a^2u.
\end{align*}
Every term in the last line except \(u_{\nu\nu}\) is bounded by
\eqref{eq:uniform-au-bound}, the fixed boundary geometry, and
Proposition~\ref{prop:global-gradient}. The desired
estimate now follows from \eqref{eq:normal-residual-derivative}.
\end{proof}

The barriers also give a collar estimate for the boundary residual.

\begin{corollary}\label{cor:residual-collar}
For the functions used in Proposition~\ref{prop:double-normal},
\begin{equation}\label{eq:residual-distance}
                 |u_i \rho_i+a u|\leq Cd
\end{equation}
in a fixed boundary collar.
\end{corollary}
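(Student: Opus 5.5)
The bound \eqref{eq:residual-distance} will be read off from the two barrier inequalities already established in the proof of Proposition~\ref{prop:double-normal}, namely \(P_-\geq0\) and \(P_+\leq0\) in the collar \(\{0<d<\delta\}\), where \(G=\sum_ju_j\rho_j+au\) is exactly the quantity in \eqref{eq:residual-distance}. No new maximum-principle argument is needed. The only work is to turn these exponential inequalities into linear bounds on \(G\) in terms of \(-\rho\), and then compare \(-\rho\) with \(d\).

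First, from \(P_+\leq0\) we get
\[
\frac{e^{\tau G}-1}{\tau}\leq -B_0\rho.
\]
Since \(e^{s}-1\geq s\), this gives \(G\leq -B_0\rho\) wherever \(G\geq0\). Where \(G<0\), the same upper bound holds trivially because \(-\rho>0\).

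Next, from \(P_-\geq0\) we get
\[
\frac{1-e^{-\tau G}}{\tau}\geq B_0\rho.
\]
Where \(G\leq0\), put \(s=-\tau G\geq0\). The inequality becomes \(e^{s}-1\leq-\tau B_0\rho\). Because \(e^s-1\geq s\), this forces \(s\leq-\tau B_0\rho\), that is, \(-G\leq -B_0\rho\). Where \(G>0\), this lower bound is again trivial.

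Combining the two steps gives \(|G|\leq -B_0\rho\) throughout the collar. Finally,
\[
-\rho=d-\varepsilon d^2\leq d
\]
for \(0<d<\delta\), so \(|u_i\rho_i+au|\leq B_0 d\). The constant \(B_0\) was fixed in the proof of Proposition~\ref{prop:double-normal} and depends only on the data, which yields \eqref{eq:residual-distance} with \(C=B_0\).

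The only point needing care is the direction of the elementary exponential inequality in each case. The convexity of the exponential makes the linear comparison hold in the right direction on both sides, so no smallness of \(G\) is required, and the uniform bound \(|G|\leq R_0\) is not even used. If one prefers \(u_i n_i\) in place of \(u_i\rho_i\), note that \(\rho_i=(1-2\varepsilon d)n_i\), so the two differ by at most \(C d\,|Du|\). By Proposition~\ref{prop:global-gradient}, this difference is also \(O(d)\).
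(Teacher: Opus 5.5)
Your proof is correct and follows the paper's argument: both read the estimate off the barrier inequalities $P_-\geq0$ and $P_+\leq0$ from Proposition~\ref{prop:double-normal}, and then use $-\rho=d-\varepsilon d^2\leq d$. The only difference is that you invoke the convexity inequality $e^s-1\geq s$ in place of the paper's two-sided bound on the profile derivatives coming from $|G|\leq R_0$. This gives the explicit constant $C=B_0$, and in fact yields $B_0\rho\leq G\leq-B_0\rho$ for either sign of $G$, so your case split is not needed.
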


\begin{proof}
Since \(|G|\leq R_0\), the derivatives of both exponential profiles
are bounded above and below. The barrier inequalities and
\(-\rho=d+O(d^2)\) give \eqref{eq:residual-distance}, with \(C\)
independent of \(a\).
\end{proof}

\section{Tangential boundary estimates}\label{sec:boundary-trace}

Retain the distance function \(d\) and normal field \(n\) fixed above,
and set
\begin{equation}\label{eq:homogenization}
                         v=(1-a d)u.
\end{equation}
Since \(d_\nu=-1\),
\begin{equation}\label{eq:v-normal-zero}
                         v_\nu=0\qquad\text{on }\partial\Omega.
\end{equation}
Together with tangential differentiation of the boundary condition,
the double-normal estimate reduces the boundary Hessian estimate to
the projected tangential trace of \(v\).

\subsection{Boundary reduction}

\begin{lemma}\label{lem:boundary-semiconvexity}
For every Euclidean orthonormal tangential frame on
\(\partial\Omega\),
\begin{equation}\label{eq:tangential-semiconvexity}
                  (u_{\alpha\beta})\geq-CI,
             \qquad (v_{\alpha\beta})\geq-CI.
\end{equation}
Consequently,
\begin{equation}\label{eq:boundary-reconstruction}
 |D^2u|
 \leq C\left(
 1+\bigl[(\delta_{ij}-\nu_i\nu_j)v_{ij}\bigr]_+\right)
 \qquad\text{on }\partial\Omega.
\end{equation}
\end{lemma}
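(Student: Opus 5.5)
The argument has two steps. First I prove the tangential semiconvexity from admissibility plus the already controlled double-normal entry. Then I bound the full boundary Hessian by reconstructing it from its tangential block, the mixed entries supplied by the boundary condition, and $u_{\nu\nu}$.

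\emph{Semiconvexity.} Work at $x_0\in\partial\Omega$ in a Euclidean orthonormal frame $e_1,e_2,\nu$. By Proposition~\ref{prop:global-gradient}, the graph metric is uniformly equivalent to the Euclidean one. Hence the symmetric matrix $h_{ij}=u_{ij}/W$ has eigenvalues comparable, up to bounded factors, to the principal curvatures once $g$ is taken into account. I will use the eigenvalue version: admissibility gives $\kappa\in\Gamma_2$, so by Lemma~\ref{lem:basic-algebra} $\kappa_1+\kappa_3>0$ and $\kappa_2+\kappa_3>0$, i.e.\ the sum of any two principal curvatures is positive. This property is invariant under congruence only up to constants, so it is cleaner to argue with $h_i{}^j$ itself. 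For any $g$-orthonormal pair $E,E'$, the min--max characterization gives $h(E,E)+h(E',E')\geq\kappa_2+\kappa_3>0$. I apply this to the $g$-orthonormal basis of the plane spanned by a unit tangential direction $e_\alpha$ and $\nu$.

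Write $h(E,E)=u_{\alpha\alpha}/(W|e_\alpha|_g^2)$, suitably corrected. The $\nu$-direction contributes a quantity bounded by $C(1+|u_{\nu\nu}|+|u_{\alpha\nu}|)$. Proposition~\ref{prop:double-normal} gives $|u_{\nu\nu}|\leq C$. The mixed entries are bounded by \eqref{eq:mixed-gradient-boundary-identity}: $u_{\alpha\nu}=-au_\alpha-\Pi_{\alpha\beta}u_\beta$, which is bounded by the gradient estimate. Because the Gram--Schmidt correction couples $e_\alpha$ with $\nu$ only through bounded coefficients, the off-diagonal terms produced are bounded multiples of $u_{\alpha\nu}$ and $u_{\nu\nu}$. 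This yields $u_{\alpha\alpha}\geq -C$ for every unit tangential $e_\alpha$, i.e.\ $(u_{\alpha\beta})\geq -CI$.

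\emph{Passing to $v$.} Since $v=(1-ad)u$ with $d=0$ and $Dd=-\nu$ on $\partial\Omega$,
\[
 v_{ij}=u_{ij}+a(\nu_iu_j+\nu_ju_i)-a\,u\,d_{ij}.
\]
For tangential $i,j$ the middle term vanishes. The last term equals $a u\,\Pi_{\alpha\beta}$ by \eqref{eq:Pi-convention}, and it is bounded by the height estimate \eqref{eq:uniform-au-bound}. So $v_{\alpha\beta}=u_{\alpha\beta}+O(1)$, which gives the second half of \eqref{eq:tangential-semiconvexity}.

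\emph{Reconstruction.} The tangential block $(u_{\alpha\beta})$ is bounded below by $-C$, so its norm is controlled by $C(1+[\operatorname{tr} u_{\alpha\beta}]_+)$. The trace $\operatorname{tr}u_{\alpha\beta}$ is $(\delta_{ij}-\nu_i\nu_j)u_{ij}=(\delta_{ij}-\nu_i\nu_j)v_{ij}+O(1)$. Combining this with the bounded mixed entries $u_{\alpha\nu}$ and the bounded $u_{\nu\nu}$ controls all entries of $D^2u$ at $x_0$, which is \eqref{eq:boundary-reconstruction}.

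\emph{Main obstacle.} The delicate point is the first step: transferring $\kappa_i+\kappa_j>0$ from the graph-metric eigenvalues to the Euclidean tangential entries. The metric $g$ is not diagonal in the frame $e_1,e_2,\nu$. I would handle this by taking the two-plane spanned by $e_\alpha$ and $\nu$, forming a $g$-orthonormal basis of it, and applying the trace bound for $h$ restricted to that two-plane. By Cauchy interlacing, this trace is at least $\kappa_2+\kappa_3>0$. Expanding the basis vectors then expresses the restricted trace as a positive multiple of $u_{\alpha\alpha}$ plus bounded multiples of $u_{\alpha\nu}$ and $u_{\nu\nu}$. The positivity of that multiple follows from the uniform metric equivalence.
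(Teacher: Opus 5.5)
Your proof is correct and takes essentially the same route as the paper. The trace of \(h\) on the \(g\)-orthonormalized plane spanned by \(e_\alpha\) and \(\nu\) equals \((1+u_\nu^2)u_{\alpha\alpha}-2u_\alpha u_\nu u_{\alpha\nu}+(1+u_\alpha^2)u_{\nu\nu}\) times a positive factor, which is exactly the paper's linearized coefficient \(F^{\beta\beta}\) (\(\beta\neq\alpha\)) whose positivity it reads off the expanded equation; your passage to \(v\) and your reconstruction from the bounded mixed and double-normal entries also match the paper's argument.
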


\begin{proof}
Fix a boundary point and choose an orthonormal frame with \(e_3=\nu\)
that diagonalizes the tangential block of \(D^2u\).
Equation~\eqref{eq:mixed-gradient-boundary-identity} bounds
\(u_{\alpha\nu}\), while Proposition~\ref{prop:double-normal} bounds
\(u_{33}\).

At the chosen point, expanding \(\sigma_2(h_i{}^j)=1\) using
\eqref{eq:graph-tensors} yields the following exact identity because
\(u_{12}=0\):
\begin{equation}\label{eq:boundary-equation-expansion}
\begin{aligned}
 &(1+u_3^2)u_{11}u_{22}\\
 &\quad+\bigl[(1+u_2^2)u_{33}-2u_2u_3u_{23}\bigr]u_{11}\\
 &\quad+\bigl[(1+u_1^2)u_{33}-2u_1u_3u_{13}\bigr]u_{22}\\
 &\quad-(1+u_1^2)u_{23}^2-(1+u_2^2)u_{13}^2
       +2u_1u_2u_{13}u_{23}=W^4 .
\end{aligned}
\end{equation}
The mixed and normal entries \(u_{\alpha3}\) and \(u_{33}\) are
bounded.  Positivity of the two linearized coefficients gives
\[
 \begin{aligned}
 &(1+u_3^2)u_{22}
 +(1+u_2^2)u_{33}-2u_2u_3u_{23}>0,\\
 &(1+u_3^2)u_{11}
 +(1+u_1^2)u_{33}-2u_1u_3u_{13}>0.
 \end{aligned}
\]
Since \(1+u_3^2\geq1\), both tangential eigenvalues have a uniform
lower bound.

Direct differentiation of \eqref{eq:homogenization} gives
\begin{align}
 v_{ij}={}&(1-a d)u_{ij}
 -a d_{ij} u
 -a d_i u_j
 -a d_j u_i.                         \label{eq:v-second-derivatives}
\end{align}
On the boundary, every component of \(D^2v-D^2u\) is bounded, which
proves the second inequality in \eqref{eq:tangential-semiconvexity}.
The same formula and Proposition~\ref{prop:double-normal} bound
\(v_{\nu\nu}\).  Differentiating \eqref{eq:v-normal-zero}
tangentially gives
\begin{equation}\label{eq:v-mixed}
                         v_{\alpha\nu}
                         =-\Pi_{\alpha\beta}v_\beta,
\end{equation}
so the mixed derivatives of \(v\) are bounded.  The two tangential
eigenvalues of \(D^2v\) have a common lower bound; their sum is
\((\delta_{ij}-\nu_i\nu_j)v_{ij}\).  This proves
\eqref{eq:boundary-reconstruction}.
\end{proof}

\begin{lemma}\label{lem:boundary-normal-trace}
There are fixed constants \(\vartheta>0\) and \(C\) such that
\begin{equation}\label{eq:Hopf-trace}
 \left\{(\delta_{ij}-n_in_j)v_{ij}\right\}_\nu
 \leq-\vartheta(\delta_{ij}-n_in_j)v_{ij}+C
 \qquad\text{on }\partial\Omega.
\end{equation}
\end{lemma}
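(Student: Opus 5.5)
The plan is to differentiate the homogeneous boundary condition \eqref{eq:v-normal-zero} twice in tangential directions. This produces the third derivatives \(v_{\alpha\alpha\nu}\) that make up the normal derivative of the projected trace, so no interior information is needed. Set
\[
 f=\sum_i v_i n_i ,
\]
defined in the fixed collar. By \eqref{eq:v-normal-zero}, \(f\equiv0\) on \(\partial\Omega\). Since \(D_nn=0\), we have \(f_\nu=v_{\nu\nu}\) on \(\partial\Omega\), and \(v_{\nu\nu}\) is bounded by Proposition~\ref{prop:double-normal} and \eqref{eq:v-second-derivatives}.

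First, I would compare the Euclidean Hessian of \(f\) with the intrinsic Hessian of \(f|_{\partial\Omega}\). Since \(f|_{\partial\Omega}\equiv 0\), the latter vanishes. The Gauss formula \eqref{eq:Pi-convention} then gives, for an orthonormal tangential frame \(e_1,e_2\),
\[
 D^2f(e_\alpha,e_\beta)=\pm\,\Pi_{\alpha\beta}f_\nu ,
\]
with a sign fixed by the convention \(D_\tau\nu=\Pi(\tau,e_\alpha)e_\alpha\). In either case this quantity is bounded by \(C\). Next, I would expand the left side directly:
\[
 D^2f(e_\alpha,e_\alpha)=v_{\alpha\alpha k}n_k+2v_{\alpha k}(D_{e_\alpha}n)_k+v_k\,D^2n_k(e_\alpha,e_\alpha).
\]
The last term is bounded by the gradient estimate (Proposition~\ref{prop:global-gradient}) together with the height bound \(|au|\leq C\) from Lemma~\ref{lem:height-estimate}, which gives \(|Dv|\leq C\). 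For the middle term, \(D_{e_\alpha}n=\Pi_{\alpha\beta}e_\beta\) on the boundary, so it equals \(2\Pi_{\alpha\beta}v_{\alpha\beta}\). Summing over \(\alpha\) gives
\[
 \sum_\alpha v_{\alpha\alpha\nu}=-2\,\Pi_{\alpha\beta}v_{\alpha\beta}+O(1).
\]

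Second, I would identify the left side of \eqref{eq:Hopf-trace}. We have \(\{(\delta_{ij}-n_in_j)v_{ij}\}_\nu=(\delta_{ij}-n_in_j)v_{ij\nu}-2(D_\nu n)_in_jv_{ij}\). Since \(D_\nu n=0\) on the boundary, the second term vanishes. By symmetry of third derivatives, the first term is \(\sum_\alpha v_{\alpha\alpha\nu}\). Therefore
\[
 \{(\delta_{ij}-n_in_j)v_{ij}\}_\nu=-2\,\Pi_{\alpha\beta}v_{\alpha\beta}+C
 \qquad\text{on }\partial\Omega .
\]

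Finally, I would use the semiconvexity from Lemma~\ref{lem:boundary-semiconvexity}. That lemma gives \(v_{\alpha\beta}+C_1\delta_{\alpha\beta}\geq0\). Combined with \(\Pi\geq\kappa_0 I\), the trace of a product of positive semidefinite matrices satisfies
\[
 \Pi_{\alpha\beta}(v_{\alpha\beta}+C_1\delta_{\alpha\beta})\geq\kappa_0\bigl(\textstyle\sum_\alpha v_{\alpha\alpha}+2C_1\bigr).
\]
Since \(\Pi\) is bounded, this yields \(-2\Pi_{\alpha\beta}v_{\alpha\beta}\leq-2\kappa_0\sum_\alpha v_{\alpha\alpha}+C\). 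This is \eqref{eq:Hopf-trace} with \(\vartheta=2\kappa_0\).

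The only delicate point is the sign bookkeeping in the first step. One must verify that the cross term \(2v_{\alpha k}(D_{e_\alpha}n)_k\) enters with a minus sign once it is moved to the other side, so that uniform convexity produces the favourable \(-\vartheta\) term. The curvature term \(\Pi_{\alpha\beta}f_\nu\) is harmless in either sign because \(v_{\nu\nu}\) is bounded. I would check this sign on the ball example of Remark~\ref{rem:optimal-volume} as a sanity test.
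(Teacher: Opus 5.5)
Your proposal is correct and is essentially the paper's proof. Differentiating \(v_\nu=0\) twice tangentially, which you organize through \(f=v_in_i\) and \(D^2f(e_\alpha,e_\beta)=\Pi_{\alpha\beta}f_\nu\) on \(\partial\Omega\) (the sign is \(+\), so the cross term \(2\Pi_{\alpha\beta}v_{\alpha\beta}\) indeed moves across with a minus sign), gives exactly \(\{(\delta_{ij}-n_in_j)v_{ij}\}_\nu=-2\Pi_{\alpha\beta}v_{\alpha\beta}+(\operatorname{tr}\Pi)v_{\nu\nu}+O(1)\); the bounded \(v_{\nu\nu}\), the tangential semiconvexity and \(\Pi\geq\kappa_0I\) then give \(\vartheta=2\kappa_0\), as in the paper.
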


\begin{proof}
At a boundary point choose a geodesic principal frame satisfying
\(D_{e_\alpha}\nu=\Pi_{\alpha\alpha}e_\alpha\).  Differentiating
\(v_\nu=0\) twice in the \(e_\alpha\)-direction gives
\[
 0=v_{\alpha\alpha\nu}-\Pi_{\alpha\alpha}v_{\nu\nu}
       +2\Pi_{\alpha\alpha}v_{\alpha\alpha}+O(1).
\]
Here \(O(1)\) consists of derivatives of the boundary second
fundamental form contracted with \(Dv\).
The normal extension in \eqref{eq:normal-extension} satisfies
\(D_\nu(\delta_{ij}-n_in_j)=0\) on the boundary.  Summing the preceding
identities, we obtain
\begin{equation}\label{eq:normal-trace-formula}
 \left\{\sum_{i,j} (\delta_{ij}-n_in_j)v_{ij}\right\}_\nu
 =-2\sum_{\alpha,\beta}\Pi^{\alpha\beta}v_{\alpha\beta}
  +(\operatorname{tr}\Pi)v_{\nu\nu}+O(1).
\end{equation}
The double normal derivative is bounded.  Uniform convexity and
Lemma~\ref{lem:boundary-semiconvexity} therefore give
\[
 -2\sum_{\alpha,\beta}\Pi^{\alpha\beta}v_{\alpha\beta}
 \leq-2\kappa_0(v_{11}+v_{22})+C
 =-2\kappa_0\sum_{i,j} (\delta_{ij}-n_in_j)v_{ij}+C,
\]
which proves \eqref{eq:Hopf-trace}.
\end{proof}

\subsection{Differentiated identities and the
\texorpdfstring{\(\sigma_2\)}{sigma2}-concavity estimate}

In this subsection all derivatives are covariant on the graph. Thus
\(h_{ijk}=\nabla_kh_{ij}\) and
\(h_{ijkl}=\nabla_l\nabla_kh_{ij}\); by Codazzi, \(h_{ijk}\) is fully
symmetric.  At a chosen point an orthonormal frame always refers to
the graph metric \(g_{ij}\).

\begin{lemma}\label{lem:geometric-component-second-variation}
Let \(\sigma_2(h_i{}^j)=1\).  In a frame orthonormal with respect to
\(g_{ij}\), and for each fixed \(m\),
\begin{align}
 [T_1]^{ij}h_{ijm}&=0,                                    \label{eq:first-geometric-variation}\\
 [T_1]^{ij}h_{ijmm}
 +\left(\sum_i h_{iim}\right)^2
 -\sum_{i,j}h_{ijm}^2&=0.                                 \label{eq:second-geometric-variation}
\end{align}
More generally, at the same point,
\begin{align}
 [T_1]^{ij}h_{k\ell ij}
 ={}&
 \sum_{r,s}h_{rsk}h_{rs\ell}
 -\left(\sum_rh_{rrk}\right)\left(\sum_sh_{ss\ell}\right)\notag\\
 &+2h_k{}^p h_{p\ell}-(\sigma_1-3\sigma_3)h_{k\ell}.
 \label{eq:newton-simons-component}
\end{align}
\end{lemma}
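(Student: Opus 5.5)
Differentiate the equation \(\sigma_2(h_i{}^j)=1\) covariantly along the graph, viewing \(\sigma_2\) as a function of the symmetric tensor \(h_{ij}\) in an orthonormal frame. This gives \eqref{eq:first-geometric-variation} and \eqref{eq:second-geometric-variation}. Then commute covariant derivatives, using Codazzi together with the Gauss equation for a hypersurface in flat \(\mathbb R^4\), to obtain the Simons-type identity \eqref{eq:newton-simons-component}.

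\textbf{Step 1.} In a \(g\)-orthonormal frame, \(\sigma_2=\tfrac12\{(\sum_ih_{ii})^2-\sum_{i,j}h_{ij}^2\}\). Hence
\[
 \partial\sigma_2/\partial h_{ij}=\sigma_1\delta_{ij}-h_{ij}=[T_1]^{ij}.
\]
Taking \(\nabla_m\) of the equation gives \eqref{eq:first-geometric-variation}.

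\textbf{Step 2.} Apply \(\nabla_m\) once more, using \(\nabla g=0\) so that no Christoffel terms appear. The second derivative \(\partial^2\sigma_2/\partial h_{ij}\partial h_{k\ell}\) equals \(\delta_{ij}\delta_{k\ell}-\delta_{ik}\delta_{j\ell}\) (symmetrized). This yields exactly \((\sum_ih_{iim})^2-\sum_{i,j}h_{ijm}^2\), and hence \eqref{eq:second-geometric-variation}.

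More generally, differentiating \eqref{eq:first-geometric-variation} in the direction \(\ell\) gives, for all \(k,\ell\),
\[
 [T_1]^{ij}h_{ijk\ell}=\sum_{r,s}h_{rsk}h_{rs\ell}-\Bigl(\sum_rh_{rrk}\Bigr)\Bigl(\sum_sh_{ss\ell}\Bigr).
\]

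\textbf{Step 3.} Convert \(h_{ijk\ell}\) into \(h_{k\ell ij}\). By Codazzi, \(h_{ijk}=h_{ikj}=h_{kij}\). Hence
\[
 h_{ijk\ell}=h_{kij\ell}=h_{ki\ell j}+\text{(curvature terms)}=h_{k\ell ij}+\text{(curvature terms)}.
\]
The commutator is given by the Ricci identity. By Gauss, the Riemann tensor of the graph in \(\mathbb R^4\) is
\[
 R_{abcd}=h_{ac}h_{bd}-h_{ad}h_{bc}.
\]
This produces the standard Simons formula
\begin{align*}
 h_{ijk\ell}={}&h_{k\ell ij}+h_{k\ell}h_i{}^ph_{pj}-h_{kj}h_i{}^ph_{p\ell}\\
 &+h_{i\ell}h_k{}^ph_{pj}-h_{ij}h_k{}^ph_{p\ell},
\end{align*}
with sign conventions to be fixed carefully.

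\textbf{Step 4.} Contract with \([T_1]^{ij}\) and work in a principal frame. The needed identities are:
\[
 [T_1]^{ij}h_{ij}=2,\qquad [T_1]^{ij}h_i{}^ph_{pj}=\sigma_1-3\sigma_3,\qquad [T_1]h=\sigma_2 I-T_2.
\]
One also needs the diagonal relation
\[
 (\sigma_1-\kappa_k)\kappa_k^2-(\sigma_1-\kappa_k)\kappa_k\cdot\kappa_k=0,
\]
which makes the mixed terms pair off. The contraction of the four curvature terms then collapses to
\[
 (\sigma_1-3\sigma_3)h_{k\ell}-2h_k{}^ph_{p\ell}.
\]
Explicitly, in the principal frame with \(k=\ell\), the curvature terms become
\begin{align*}
 &(\sigma_1-\kappa_i)\bigl[\kappa_k\kappa_i^2-\kappa_k^2\kappa_i\delta_{ik}+\kappa_k^2\kappa_i\delta_{ik}-\kappa_i\kappa_k^2\bigr]\\
 &\quad=\kappa_k(\sigma_1-3\sigma_3)-2\kappa_k^2.
\end{align*}
Here the sum over \(i\) uses \(\sum_i(\sigma_1-\kappa_i)\kappa_i=2\). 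Off-diagonal entries vanish in this frame, and tensoriality extends the identity to any orthonormal frame. Substituting into Step 2 and rearranging gives \eqref{eq:newton-simons-component}.

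\textbf{Main obstacle.} The only delicate point is bookkeeping of signs and index order in the Ricci commutation. I would fix the convention for \(h_{ijkl}=\nabla_l\nabla_kh_{ij}\) and check the result against the trace case. Setting \(k=\ell\) and summing with weight \([T_1]\) must be consistent with \eqref{eq:second-geometric-variation}. The sphere is a further check: there \(h_{ijk}=0\) and \(\kappa\equiv 1/\sqrt3\), so both sides must vanish. Indeed
\[
 2\kappa^2-(2\kappa-3\kappa^3)\kappa=\kappa^2(3\kappa^2)\,\big|_{\kappa^2=1/3}\ne0,
\]
which signals that on the sphere the left side is not obviously zero. Since \(h_{k\ell ij}=0\) there, the correct signs must make the curvature terms \(2\kappa^2-(\sigma_1-3\sigma_3)\kappa\) vanish. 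With \(\sigma_1=\sqrt3\) and \(\sigma_3=1/(3\sqrt3)\), this gives \(2/3-(\sqrt3-1/\sqrt3)/\sqrt3=2/3-2/3=0\), confirming the stated form.
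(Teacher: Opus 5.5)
Your proposal is correct and follows the same route as the paper. You differentiate $\sigma_2=1$ twice to get the general $(k,\ell)$ form of the second variation, convert $h_{ijk\ell}$ into $h_{k\ell ij}$ using Codazzi, the Ricci identity and the Gauss equation, and contract with $[T_1]$; your displayed commutator formula and the principal-frame contraction $(\sigma_1-3\sigma_3)h_{k\ell}-2h_k{}^ph_{p\ell}$ both check out. The only blemish is your first sphere check, which uses $\sigma_1-3\sigma_3=2\kappa-3\kappa^3$ instead of $3\kappa-3\kappa^3$; the corrected computation that follows, with $\sigma_1=\sqrt3$, is right, so the spurious ``$\ne0$'' should simply be deleted.
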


\begin{proof}
Choose a local geodesic frame, orthonormal with respect to \(g_{ij}\),
at the point under consideration.  The first differential of
\(\sigma_2\) is \([T_1]^{ij}h_{ijm}\).  In this frame its second
differential in the \(e_m\)-direction is
\[
 [T_1]^{ij}h_{ijmm}
 +(g^{ij}g^{k\ell}-g^{ik}g^{j\ell})h_{ijm}h_{k\ell m},
\]
which is precisely \eqref{eq:second-geometric-variation}.

Polarizing the second variation gives
\[
 [T_1]^{ij}h_{ijk\ell}
 =
 \sum_{r,s}h_{rsk}h_{rs\ell}
 -\left(\sum_rh_{rrk}\right)\left(\sum_sh_{ss\ell}\right).
\]
On the other hand, Codazzi together with the Gauss and Ricci
identities gives
\[
 [T_1]^{ij}(h_{k\ell ij}-h_{ijk\ell})
 =2h_k{}^ph_{p\ell}-(\sigma_1-3\sigma_3)h_{k\ell}.
\]
Adding these formulas proves
\eqref{eq:newton-simons-component}.
\end{proof}

Because the tangential projection need not commute with \(h_i{}^j\),
we diagonalize the projection and work in the resulting
graph-orthonormal frame.

\begin{lemma}[Quantitative \(\sigma_2\)-concavity]
\label{lem:sigma2-concavity}
Let \((h_{ij})\) be a symmetric \(3\times3\) matrix with ordered
eigenvalues
\[
 \kappa_1\geq\kappa_2\geq\kappa_3,
 \qquad
 \kappa\in\Gamma_2,
 \qquad
 \sigma_2(h_i{}^j)=1.
\]
Let \(e_1,e_2,e_3\) be an orthonormal frame, and let \(h_{ijk}\) be a
fully symmetric three-tensor satisfying
\begin{equation}\label{eq:sigma2-concavity-tangency}
 \sum_{i,j}[T_1]^{ij}h_{ij\alpha}=0,
 \qquad \alpha=1,2.
\end{equation}
Suppose that
\[
 0<c_0\leq\mu_\alpha\leq C_0,
 \qquad \alpha=1,2.
\]
Then, for arbitrary symmetric coefficient arrays
\((B_\alpha^{ij})\), \(\alpha=1,2\),
\begin{align}
 &\sum_{\alpha=1}^2\mu_\alpha
 \left\{
 \sum_{i,j}h_{ij\alpha}^2
 -\left(\sum_i h_{ii\alpha}\right)^2
 \right\}
 +\sum_{\alpha=1}^2\sum_{i,j}B_\alpha^{ij}h_{ij\alpha}
 \notag\\
 &\qquad\geq
 -C\sum_{\alpha=1}^2
 \left\{
 \sum_{i,j}(B_\alpha^{ij})^2
 +\frac{\left|\sum_{i,j}B_\alpha^{ij}h_{ij}\right|^2}
        {1+\min_i\kappa_i^2}
 \right\},
 \label{eq:sigma2-concavity-estimate}
\end{align}
where \(C\) depends only on \(c_0\) and \(C_0\).
\end{lemma}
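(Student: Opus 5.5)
Our plan is to reduce the inequality to a statement about the quadratic form \(Q(\xi)=\sum_{i,j}\xi_{ij}^2-(\sum_i\xi_{ii})^2\) restricted to the hyperplane \(\{\xi:\ [T_1]^{ij}\xi_{ij}=0\}\), and then complete the square against the linear term. For each fixed \(\alpha\), write \(\xi=(h_{ij\alpha})\). Since \(Q(\xi)=-\sigma_2\)-second-variation, i.e. \(-\partial^2\sigma_2(\xi,\xi)\), the concavity of \(\sigma_2^{1/2}\) on \(\Gamma_2\) gives on the tangent hyperplane \(Q(\xi)\geq0\); but to absorb an arbitrary linear term \(\sum B^{ij}_\alpha\xi_{ij}\) we need quantitative positivity. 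Full symmetry is not essential at this stage; we will only use the constraint \eqref{eq:sigma2-concavity-tangency} and symmetry in \(i,j\), treating each \(\alpha\) separately (the weights \(\mu_\alpha\in[c_0,C_0]\) then only affect constants).

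Key step: work in the eigenframe of \(h\) (rotating \(e_1,e_2,e_3\) costs nothing, since all quantities are frame invariant sums; the tangency constraint becomes \(\sum_i(\sigma_1-\kappa_i)\xi_{ii}=0\)). Off-diagonal entries contribute \(2\sum_{i<j}\xi_{ij}^2\) to \(Q\), which is coercive, so \(B^{ij}\xi_{ij}\) for \(i\neq j\) is absorbed by Cauchy--Schwarz with cost \(C\sum(B^{ij})^2\). For the diagonal part \(x=(\xi_{11},\xi_{22},\xi_{33})\), \(Q=|x|^2-(\sum x_i)^2\) restricted to \(\{\sum(\sigma_1-\kappa_i)x_i=0\}\). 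We show the sharp lower bound
\[
 Q(x)\geq c\,\frac{|x|^2}{1+\min_i\kappa_i^2}\quad\text{(degenerate direction)},
\]
and more precisely identify the degenerate direction as \(x\parallel \kappa\)-related vector \(w\) with \(w_i\propto\kappa_i\)-weighted profile, along which \(\sum B^{ii}w_i\) is comparable to \(\sum B^{ij}h_{ij}\). Concretely, decompose \(x=s\,\hat w+y\) where \(\hat w\) is the minimizing direction of \(Q\) on the constraint plane; on the \(Q\)-orthogonal complement \(Q(y)\geq c|y|^2\) uniformly, while \(Q(\hat w)\gtrsim(1+\min\kappa_i^2)^{-1}\) and \(|\sum B^{ii}\hat w_i|\lesssim |\sum B^{ii}\kappa_i|/\sqrt{1+\min\kappa_i^2}+|B|\). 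Completing the square then yields \(Q+B\cdot x\geq -C|B|^2-C|B\cdot h|^2/(1+\min\kappa_i^2)\).

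The main obstacle is this two-dimensional eigen-analysis: the constraint plane is spanned by vectors whose coefficients are governed by the identities \eqref{eq:component-polynomial}, i.e. \((\sigma_1-\kappa_i)(1+\kappa_i^2)=\sigma_1-\sigma_3\). I would parametrize the plane by \(x_i=(1+\kappa_i^2)z_i\)-type substitution so that the constraint reads \(\sum z_i=0\), compute \(Q\) explicitly in terms of \(\kappa\), and use Lemma~\ref{lem:basic-algebra} (especially \eqref{eq:least-curvature-growth} and \(\kappa_j+\kappa_k>0\)) to bound its smallest eigenvalue below and to check that the corresponding eigenvector is, up to \(O(1)\) error, proportional to the vector \(\kappa\) normalized by \(\sqrt{1+\min\kappa_i^2}\) — which is exactly what produces the \(|B\cdot h|^2/(1+\min_i\kappa_i^2)\) term. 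If the uniform lower bound on the nondegenerate eigenvalue fails when \(\kappa_3\to0\) or \(\kappa_1\to\infty\), I would fall back to a direct case split on the sign and size of \(\kappa_3\).
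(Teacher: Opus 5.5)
There is a genuine gap. You set full symmetry aside and treat each slice $\xi^\alpha_{ij}=h_{ij\alpha}$ on its own, using only $[T_1]^{ij}\xi^\alpha_{ij}=0$. For a single slice, however, \eqref{eq:sigma2-concavity-estimate} is false, so no eigen-analysis of $Q$ on the constraint plane can produce it.

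The degenerate direction is exactly $Z=h-\sigma_1^{-1}I$, so your guess that it is essentially parallel to $\kappa$ is right. One computes
\[
 Q(Z)=2\bigl(1-3\sigma_1^{-2}\bigr),\qquad
 |Z|^2=\sigma_1^2-4+3\sigma_1^{-2},\qquad
 \sum_{i,j}Z_{ij}h_{ij}=\sigma_1^2-3 .
\]
Hence the least value of $Q(\xi)/|\xi|^2$ on $\{[T_1]^{ij}\xi_{ij}=0\}$ is $2/(\sigma_1^2-1)$, not $\gtrsim(1+\min_i\kappa_i^2)^{-1}$. For $\kappa\approx(K,\tfrac1{2K},\tfrac1{2K})$ the former is about $2/K^2$ and the latter about $1$. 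Completing the square in one slice therefore gives at best $-C\bigl(|B_\alpha|^2+|\sum_{i,j}B_\alpha^{ij}h_{ij}|^2\bigr)$, with no denominator, and this is sharp.

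Here is an explicit counterexample to the per-slice inequality. Take $h=\operatorname{diag}(k,k,-m)$ with $k=m+\sqrt{1+m^2}$. Then $\sigma_2=1$, $\kappa\in\Gamma_2$, $\min_i\kappa_i^2=m^2$, and $\sigma_1\approx3m$. Put $h_{ij1}=sZ_{ij}$, $h_{ij2}=0$, $B_1=Z/|Z|$, $B_2=0$, $\mu_1=1$. The tangency conditions hold. The left side of \eqref{eq:sigma2-concavity-estimate} is $s^2Q(Z)+s|Z|$, whose minimum over $s$ is $-|Z|^2/(4Q(Z))\approx-9m^2/8$. The right side is $-C\bigl(1+(\sigma_1^2-3)^2/(|Z|^2(1+m^2))\bigr)\approx-10C$. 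Only full symmetry excludes this data, since $h_{221}=sZ_{22}\neq0=h_{212}$.

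A case split on $\kappa_3$ does not rescue the argument. For $\kappa_3\geq-C$ one has $\min_i\kappa_i^2\leq C'$ and the per-slice bound suffices, but $\kappa_3\to-\infty$ is exactly where it fails. The denominator is also not cosmetic. In Proposition~\ref{prop:tangential-trace} the contraction term is only bounded by $C(\sigma_1-\sigma_3)^2$ (see \eqref{eq:weighted-projector-contraction}). This becomes $C(1+\kappa_1)(\sigma_1-\sigma_3)$ only after division by $1+\min_i\kappa_i^2$, via \eqref{eq:least-curvature-growth}.

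The missing idea is the coupling of the slices $\alpha=1,2$ through full symmetry. Write $h_{ij\alpha}=t_\alpha Z_{ij}+Y^\alpha_{ij}$ with $Y^\alpha$ Euclidean-orthogonal to both $[T_1]$ and $Z$. Since $[T_1]+Z=(\sigma_1-\sigma_1^{-1})I$, each $Y^\alpha$ is traceless, and $Q(\xi^\alpha)=t_\alpha^2Q(Z)+|Y^\alpha|^2$. The relation $h_{i12}=h_{i21}$ gives $t_1h_{i2}-t_2h_{i1}=Y^2_{i1}-Y^1_{i2}+\sigma_1^{-1}(t_1\delta_{i2}-t_2\delta_{i1})$. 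Since the least singular value of $h$ is $\min_i|\kappa_i|$, this yields
\[
 \min_i\kappa_i^2\,(t_1^2+t_2^2)\leq C\sum_{\alpha}|Y^\alpha|^2+C\sigma_1^{-2}(t_1^2+t_2^2).
\]
For $\sigma_1$ large, this upgrades the coercivity to $c\sum_\alpha|Y^\alpha|^2+c(1+\min_i\kappa_i^2)(t_1^2+t_2^2)$. Now split
\[
 \sum_{i,j}B_\alpha^{ij}h_{ij\alpha}=t_\alpha\sum_{i,j}B_\alpha^{ij}h_{ij}-t_\alpha\sigma_1^{-1}\operatorname{tr}B_\alpha+\sum_{i,j}B_\alpha^{ij}Y^\alpha_{ij}.
\]
The first piece is absorbed at cost $|\sum_{i,j}B_\alpha^{ij}h_{ij}|^2/(1+\min_i\kappa_i^2)$, and the other two at cost $C|B_\alpha|^2$. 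For $\sigma_1$ bounded, the bound $Q(\xi)\geq\frac{2}{\sigma_1^2-1}|\xi|^2$ already suffices.
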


\begin{proof}
At the point under consideration,
\[
 [T_1]^{ij}=\sigma_1\delta_{ij}-h_{ij}.
\]
Since \(\sigma_2=1\),
\begin{equation}\label{eq:newton-normal-identities}
 \sum_i[T_1]^{ii}=2\sigma_1,
 \qquad
 \sum_{i,j}\bigl([T_1]^{ij}\bigr)^2
 =2(\sigma_1^2-1).
\end{equation}
The tangency condition
\eqref{eq:sigma2-concavity-tangency} therefore gives
\begin{align*}
 \sum_i h_{ii\alpha}
 &=
 \sum_{i,j}
 \left(
 \delta_{ij}
 -\frac{\sigma_1}{\sigma_1^2-1}[T_1]^{ij}
 \right)h_{ij\alpha}.
\end{align*}
Thus, by Cauchy--Schwarz,
\[
\left(\sum_i h_{ii\alpha}\right)^2
\leq \sum_{i,j}
 \left(
 \delta_{ij}
 -\frac{\sigma_1}{\sigma_1^2-1}[T_1]^{ij}
 \right)^2
 \sum_{i,j}h_{ij\alpha}^2.
\]
A direct calculation using
\eqref{eq:newton-normal-identities} gives
\[
 \sum_{i,j}
 \left(
 \delta_{ij}
 -\frac{\sigma_1}{\sigma_1^2-1}[T_1]^{ij}
 \right)^2
 =
 \frac{\sigma_1^2-3}{\sigma_1^2-1}.
\]
Consequently,
\begin{equation}\label{eq:sigma2-tangent-lower-bound}
 \sum_{i,j}h_{ij\alpha}^2
 -\left(\sum_i h_{ii\alpha}\right)^2
 \geq
 \frac{2}{\sigma_1^2-1}
 \sum_{i,j}h_{ij\alpha}^2.
\end{equation}
If \(\sigma_1\) is bounded, this gives a uniform coercive estimate, and
\eqref{eq:sigma2-concavity-estimate} follows immediately from
Young's inequality.  We may therefore assume that \(\sigma_1\) is
larger than a fixed constant.

Set
\begin{equation}\label{eq:sigma2-reference-tensor}
 Z_{ij}=h_{ij}-\frac1{\sigma_1}\delta_{ij}.
\end{equation}
Then
\begin{equation}\label{eq:Z-properties}
 \sum_{i,j}[T_1]^{ij}Z_{ij}=0,
 \qquad
 \sum_{i,j}(Z_{ij}-h_{ij})^2=\frac3{\sigma_1^2},
\end{equation}
and
\begin{equation}\label{eq:Z-quadratic-value}
 \sum_{i,j}Z_{ij}^2-\left(\sum_iZ_{ii}\right)^2
 =
 2\left(1-\frac3{\sigma_1^2}\right)\geq c.
\end{equation}
For \(\alpha=1,2\), decompose
\begin{equation}\label{eq:sigma2-concavity-decomposition}
 h_{ij\alpha}=t_\alpha Z_{ij}+Y^\alpha_{ij},
\end{equation}
where
\begin{equation}\label{eq:sigma2-concavity-orthogonality}
 \sum_{i,j}[T_1]^{ij}Y^\alpha_{ij}=0,
 \qquad
 \sum_{i,j}Z_{ij}Y^\alpha_{ij}=0.
\end{equation}
Since
\[
 [T_1]^{ij}+Z_{ij}
 =
 \left(\sigma_1-\frac1{\sigma_1}\right)\delta_{ij},
\]
the two relations in \eqref{eq:sigma2-concavity-orthogonality} imply
\[
                         \sum_iY^\alpha_{ii}=0.
\]
It follows that
\begin{align}
 &\sum_{i,j}h_{ij\alpha}^2
 -\left(\sum_i h_{ii\alpha}\right)^2 \notag\\
 &\qquad=
 t_\alpha^2
 \left\{
 \sum_{i,j}Z_{ij}^2-\left(\sum_iZ_{ii}\right)^2
 \right\}
 +\sum_{i,j}(Y^\alpha_{ij})^2.
 \label{eq:sigma2-concavity-quadratic}
\end{align}

Full symmetry of \(h_{ijk}\) and
\eqref{eq:sigma2-concavity-decomposition} give
\begin{equation}\label{eq:sigma2-concavity-coupling}
 t_1Z_{i2}-t_2Z_{i1}
 =
 Y^2_{i1}-Y^1_{i2}.
\end{equation}
Since
\[
                         h_{ij}=Z_{ij}+\frac1{\sigma_1}\delta_{ij},
\]
equation \eqref{eq:sigma2-concavity-coupling} gives
\begin{align*}
 t_1h_{i2}-t_2h_{i1}
 =
 Y^2_{i1}-Y^1_{i2}
 +\frac1{\sigma_1}
   \bigl(t_1\delta_{i2}-t_2\delta_{i1}\bigr).
\end{align*}
The least singular value of \((h_{ij})\) is
\(\min_i|\kappa_i|\). Therefore,
\begin{align}
 \min_i\kappa_i^2\,(t_1^2+t_2^2)
 \leq{}&
 C\sum_{i,j}
 \left\{
 (Y^1_{ij})^2+(Y^2_{ij})^2
 \right\} \notag\\
 &+\frac{C}{\sigma_1^2}(t_1^2+t_2^2).
 \label{eq:sigma2-concavity-control}
\end{align}
After enlarging the fixed lower threshold for \(\sigma_1\),
\eqref{eq:Z-quadratic-value} gives a fixed positive lower bound for
the coefficient of \(t_\alpha^2\) in
\eqref{eq:sigma2-concavity-quadratic}.  To use
\eqref{eq:sigma2-concavity-control}, consider two cases.  If
\[
                 \min_i\kappa_i^2\leq\frac{2C}{\sigma_1^2},
\]
then \(\min_i\kappa_i^2(t_1^2+t_2^2)\) is controlled by the
\(t_\alpha^2\)-terms already present in
\eqref{eq:sigma2-concavity-quadratic}.  Otherwise, the last term on
the right-hand side of \eqref{eq:sigma2-concavity-control} is absorbed
into its left-hand side, and
\(\min_i\kappa_i^2(t_1^2+t_2^2)\) is controlled by the terms containing
\(Y^\alpha_{ij}\).  Thus \eqref{eq:Z-quadratic-value},
\eqref{eq:sigma2-concavity-quadratic}, and
\eqref{eq:sigma2-concavity-control}, together with the fixed bounds for
\(\mu_1,\mu_2\), give
\begin{align}
 &\sum_{\alpha=1}^2\mu_\alpha
 \left\{
 \sum_{i,j}h_{ij\alpha}^2
 -\left(\sum_i h_{ii\alpha}\right)^2
 \right\} \notag\\
 &\qquad\geq
 c\sum_{\alpha=1}^2\sum_{i,j}(Y^\alpha_{ij})^2
 +c\left(1+\min_i\kappa_i^2\right)(t_1^2+t_2^2).
 \label{eq:sigma2-concavity-coercivity}
\end{align}

Finally, \eqref{eq:sigma2-reference-tensor} and
\eqref{eq:sigma2-concavity-decomposition} give
\begin{align*}
 \sum_{i,j}B_\alpha^{ij}h_{ij\alpha}
 ={}&
 t_\alpha\sum_{i,j}B_\alpha^{ij}h_{ij}
 -\frac{t_\alpha}{\sigma_1}\sum_iB_\alpha^{ii}
 +\sum_{i,j}B_\alpha^{ij}Y^\alpha_{ij}.
\end{align*}
For every \(\varepsilon>0\), Young's inequality gives
\begin{align*}
 &\left|
 \sum_{i,j}B_\alpha^{ij}Y^\alpha_{ij}
 -\frac{t_\alpha}{\sigma_1}\sum_iB_\alpha^{ii}
 \right|\\
 &\qquad\leq
 \varepsilon\left\{
 \sum_{i,j}(Y^\alpha_{ij})^2+t_\alpha^2
 \right\}
 +C_\varepsilon\sum_{i,j}(B_\alpha^{ij})^2,
\end{align*}
and
\begin{align*}
 \left|
 t_\alpha\sum_{i,j}B_\alpha^{ij}h_{ij}
 \right|
 \leq{}&
 \varepsilon
 \left(1+\min_i\kappa_i^2\right)t_\alpha^2\\
 &+
 C_\varepsilon
 \frac{\left|\sum_{i,j}B_\alpha^{ij}h_{ij}\right|^2}
      {1+\min_i\kappa_i^2}.
\end{align*}
The lower threshold for \(\sigma_1\) and the coercivity constant in
\eqref{eq:sigma2-concavity-coercivity} are now fixed in terms of
\(c_0,C_0\).  Choose \(\varepsilon\) smaller than a fixed fraction of
that coercivity constant.  After summing over \(\alpha=1,2\), the terms
containing \(Y^\alpha_{ij}\) and \(t_\alpha\) are absorbed by
\eqref{eq:sigma2-concavity-coercivity}.  This proves
\eqref{eq:sigma2-concavity-estimate}.
\end{proof}

\subsection{Tangential trace estimates}

\begin{proposition}\label{prop:tangential-trace}
There is a fixed boundary collar, independent of \(a\in(0,1]\), in
which \(d\leq1/2\),
\[
                         \frac12\leq1-a d\leq\frac32,
\]
and
\begin{align}
 &F^{ij}
 \bigl[(1-a d)(\delta_{k\ell}-n_kn_\ell)u_{k\ell}\bigr]_{ij}
 +F_{u_i}
 \bigl[(1-a d)(\delta_{k\ell}-n_kn_\ell)u_{k\ell}\bigr]_i
 \notag\\
 &\quad=
 [T_1]^{ij}
 \Bigl\{
 \nabla_i\nabla_j
 \bigl[(1-a d)(\delta_{k\ell}-n_kn_\ell)h_{k\ell}\bigr]
 \notag\\
 &\hspace{43mm}
 +(1-a d)(\delta_{k\ell}-n_kn_\ell)h_{k\ell}
       h_i{}^p h_{pj}
 \Bigr\}                                                   \notag\\
 &\quad\geq
 -C(1+\kappa_1)(\sigma_1-\sigma_3).
 \label{eq:homogenization-absorption}
\end{align}
\end{proposition}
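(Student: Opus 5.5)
The equality in \eqref{eq:homogenization-absorption} is \eqref{eq:newton-linearized-operator} applied with
\[
\phi=(1-ad)(\delta_{k\ell}-n_kn_\ell)u_{k\ell}.
\]
Since \(u_{k\ell}/W=h_{k\ell}\), where the \(h_{k\ell}\) are the Euclidean coordinate components of the second fundamental form, we have \(\phi/W=(1-ad)(\delta_{k\ell}-n_kn_\ell)h_{k\ell}\). The collar is fixed by requiring \(d\le\min\{\delta,1/2\}\). Then \(a\le1\) gives \(1/2\le1-ad\le 3/2\), and the projection \(P^{k\ell}=(1-ad)(\delta_{k\ell}-n_kn_\ell)\) has \(C^2\) norms bounded independently of \(a\). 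The real content is the lower bound. I will expand
\[
[T_1]^{ij}\bigl\{\nabla_i\nabla_j(P^{k\ell}h_{k\ell})+P^{k\ell}h_{k\ell}\,h_i{}^ph_{pj}\bigr\}
\]
by viewing \(h_{k\ell}=\langle \overline D_{E_k^\top}E_\ell^\top,N\rangle\)-type components as the contraction of the tensor \(h\) with the tangential projections \(E_k^\top\) of the constant fields. In this way \(\nabla\nabla(P^{k\ell}h_{k\ell})\) splits into four pieces:
\begin{itemize}
\item[(i)] \(P^{k\ell}\nabla\nabla h\), the principal term;
\item[(ii)] terms linear in \(\nabla h\), with coefficients \(\nabla P\) or \(\nabla E_k^\top\);
\item[(iii)] terms of order zero in \(\nabla h\), at most cubic in \(h\);
\item[(iv)] terms with \(\nabla^2P\) times \(h\).
\end{itemize}
Throughout I work in a frame orthonormal for \(g_{ij}\) in which the projection form \(P\), restricted to the graph tangent space, is diagonal. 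By the gradient bound, its eigenvalues are \(\mu_1,\mu_2\in[c_0,C_0]\) and \(\mu_3=0\).

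For the principal term I use \eqref{eq:newton-simons-component}. Its last summand \(-(\sigma_1-3\sigma_3)P^{k\ell}h_{k\ell}\) cancels exactly against the Jacobi term, because \([T_1]^{ij}h_i{}^ph_{pj}=\sigma_1-3\sigma_3\). The summand \(2P^{k\ell}h_k{}^ph_{p\ell}\) is nonnegative. What remains is
\[
\sum_\alpha\mu_\alpha\Bigl\{\sum h_{rs\alpha}^2-\Bigl(\sum_r h_{rr\alpha}\Bigr)^2\Bigr\}.
\]
Lemma~\ref{lem:sigma2-concavity} applies to this, since its tangency hypothesis is \eqref{eq:first-geometric-variation}. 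I will feed the linear terms (ii) into it as \(\sum_\alpha B_\alpha^{ij}h_{ij\alpha}\).

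The key point is that every term in (ii) can be written with at least one of the three indices of \(h_{ijk}\) in the \(\alpha\)-directions. For the \(\nabla P\) terms, \(\nabla(n_kn_\ell)=(\nabla n)_kn_\ell+n_k(\nabla n)_\ell\), and \(\nabla n\) is tangential to the level sets of \(d\) because \(|n|=1\). Full symmetry of \(h_{ijk}\) then lets me move a tangential index into the derivative slot; after re-diagonalizing, these become \(\mu\)-weighted directions. Any component landing purely in the \(e_3\) slot is rewritten using \eqref{eq:first-geometric-variation}. For the \(\nabla E_k^\top\) terms, \(\nabla_iE_k^\top=\langle E_k,N\rangle h_i{}^pe_p\), so their coefficients have the form \([T_1]h\).

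The resulting arrays satisfy
\[
\sum(B_\alpha^{ij})^2\le C\sum_i(\sigma_1-\kappa_i)^2(1+\kappa_i^2)\le C(1+\kappa_1)(\sigma_1-\sigma_3)
\]
by \eqref{eq:newton-square-growth} and \eqref{eq:curvature-cofactor-growth}. They also satisfy \(|B_\alpha\cdot h|\le C\sum_i(\sigma_1-\kappa_i)(|\kappa_i|+\kappa_i^2)\). For this second quantity, \eqref{eq:least-curvature-growth} gives
\[
\frac{|B_\alpha\cdot h|^2}{1+\min_i\kappa_i^2}\le C(1+\kappa_1)(\sigma_1-\sigma_3).
\]
That is exactly why the denominator \(1+\min\kappa_i^2\) appears in Lemma~\ref{lem:sigma2-concavity}.

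The zeroth-order terms (iii) and (iv) are bounded by \(C\sum_i(\sigma_1-\kappa_i)(|\kappa_i|+\kappa_i^2+|\kappa_i|^3)\), which is at most \(C(1+\kappa_1)(\sigma_1-\sigma_3)\) by \eqref{eq:component-polynomial}.

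The step I expect to be the main obstacle is (ii): verifying that every linear \(\nabla h\) term really carries a tangential \(\alpha\)-index, with a coefficient obeying the two bounds above. The danger is a term \(h_{33k}\) contracted against a large coefficient, which Lemma~\ref{lem:sigma2-concavity} does not control. If such a term survives, I would try to eliminate it with \eqref{eq:first-geometric-variation} in the form \([T_1]^{33}h_{33k}=-\sum_{(i,j)\ne(3,3)}[T_1]^{ij}h_{ijk}\). Failing that, I would absorb it using the nonnegative term \(2P^{k\ell}h_k{}^ph_{p\ell}\) together with Young's inequality.
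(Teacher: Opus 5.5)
Your overall route is the paper's. You use the $P$-diagonal graph-orthonormal frame and the Jacobi form for the equality. You apply \eqref{eq:newton-simons-component} with the exact cancellation of $(\sigma_1-3\sigma_3)P^{k\ell}h_{k\ell}$, discard $2\mu_\alpha\sum_p h_{\alpha p}^2$, and invoke Lemma~\ref{lem:sigma2-concavity} with the same coefficient bounds. Three side remarks follow.

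First, the step you flag as the main obstacle is harmless. $P$ has constant rank two, so differentiating $P^{k\ell}\zeta_k\zeta_\ell=0$, with $\zeta$ spanning $\ker P$, gives $(\nabla_iP)^{33}=0$ in your frame. This holds for the full graph covariant derivative. It therefore covers both your $\nabla P$ terms and your $\nabla E_k^\top$ terms, which are the connection terms $Wu^kh_{ip}P^{p\ell}+Wu^\ell h_{ip}P^{kp}$ in the paper's splitting. Every linear third-order term thus has a tangential slot, and no $h_{33k}$ term arises.

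Second, your Young fallback could not have worked. $2P^{k\ell}h_k{}^ph_{p\ell}$ is quadratic in $h$, not in $\nabla h$, so it cannot absorb a term linear in $\nabla h$.

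Third, $h(E_k^\top,E_\ell^\top)=g^{kp}g^{\ell q}h_{pq}$, not $h_{k\ell}$. To represent $\phi/W$ you need the lifted coordinate fields $\partial_kX=E_k+u_kE_4$, with $\nabla_i\partial_kX=u_{ik}\nabla u$. Alternatively, treat $P$ as a contravariant graph tensor, as the paper does.

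The genuine omission is in your class (iii), which is not of order zero in $\nabla h$. Differentiating the frame fields a second time produces a third-order term. In your notation, $\nabla_j\nabla_iE_k^\top$ contains $\langle E_k,N\rangle(\nabla_jh_i{}^p)e_p$. In the paper's notation, $\nabla_i\nabla_jP^{k\ell}$ contains $Wu^kP^{p\ell}h_{jpi}+Wu^\ell P^{kp}h_{jpi}$. This term is linear in $\nabla h$, with coefficient $[T_1]^{ij}$ times a vector of size $|h|$.

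Neither of your tools handles it. Your zeroth-order bound does not apply, and Lemma~\ref{lem:sigma2-concavity} costs $C|[T_1]|^2|h|^2$. For $\kappa_1=K$ and $\kappa_2=\kappa_3\approx1/(2K)$, that cost is of order $K^4$, while the target $(1+\kappa_1)(\sigma_1-\sigma_3)$ is of order $K^2$.

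What saves the argument is an exact cancellation. After contraction with $[T_1]^{ij}$ and use of the full symmetry of $h_{ijk}$, the term becomes $2Wu^kh_{k\ell}P^{\ell p}[T_1]^{ij}h_{ijp}$. This vanishes by \eqref{eq:first-geometric-variation}. With this observation inserted, your argument coincides with the paper's proof.
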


\begin{proof}
At a fixed point, regard
\[
                         P^{k\ell}=\delta_{k\ell}-n_kn_\ell
\]
as a symmetric contravariant tensor on the graph. Relative to the graph
metric, \(P\) is positive semidefinite of rank two, and its two nonzero
eigenvalues have fixed positive upper and lower bounds by
Proposition~\ref{prop:global-gradient}. Choose a graph-orthonormal frame
at the point such that
\[
                         P^{k\ell}
 =\operatorname{diag}(\mu_1,\mu_2,0),
 \qquad
 0<c\leq\mu_\alpha\leq C.
\]

Applying \eqref{eq:newton-linearized-operator} with
\[
 \phi=(1-a d)P^{k\ell}u_{k\ell}
\]
gives the equality in \eqref{eq:homogenization-absorption}, since
\[
                         \frac{\phi}{W}
 =(1-a d)P^{k\ell}h_{k\ell}.
\]
Expanding the right-hand side and using
\eqref{eq:newton-simons-component}, together with
\[
 [T_1]^{ij}h_i{}^p h_{pj}=\sigma_1-3\sigma_3,
\]
we obtain
\begin{align}
 &F^{ij}
 \bigl[(1-a d)P^{k\ell}u_{k\ell}\bigr]_{ij}
 +F_{u_i}
 \bigl[(1-a d)P^{k\ell}u_{k\ell}\bigr]_i                  \notag\\
 ={}&
 (1-a d)\sum_{\alpha=1}^2\mu_\alpha
 \left\{
 \sum_{r,s}h_{rs\alpha}^2
 -\left(\sum_rh_{rr\alpha}\right)^2
 +2\sum_p h_{\alpha p}^2
 \right\}                                                  \notag\\
 &+2(1-a d)[T_1]^{ij}(\nabla_iP)^{k\ell}h_{k\ell j}
 -2a[T_1]^{ij}d_iP^{k\ell}h_{k\ell j}                     \notag\\
 &+(1-a d)[T_1]^{ij}
       (\nabla_i\nabla_jP)^{k\ell}h_{k\ell}                \notag\\
 &-2a[T_1]^{ij}d_i(\nabla_jP)^{k\ell}h_{k\ell}
 -a[T_1]^{ij}(\nabla_i\nabla_jd)P^{k\ell}h_{k\ell}.
 \label{eq:weighted-newton-trace-expansion}
\end{align}
The terms containing
\[
 (\sigma_1-3\sigma_3)(1-a d)P^{k\ell}h_{k\ell}
\]
have canceled. This cancellation is the reason for applying the full
Jacobi form rather than only its second-order part.

We first control the terms containing \(\nabla h\). Since \(P\) has
constant rank two, one has
\[
                         (\nabla_iP)^{33}=0
\]
in the chosen frame. Indeed, if a local covector \(\zeta\) spans
\(\ker P\), differentiation of
\(P^{k\ell}\zeta_k\zeta_\ell=0\) gives the assertion.

For \(\alpha=1,2\), define
\begin{equation}\label{eq:projector-coefficient-definition}
 \widehat B_\alpha^{rj}=
 \begin{cases}
  2[T_1]^{ij}(\nabla_iP)^{\alpha r},
     &r=1,2,\\[2mm]
  4[T_1]^{ij}(\nabla_iP)^{\alpha3},
     &r=3,
 \end{cases}
 \qquad
 B_\alpha^{rj}
 =
 \frac12\bigl(
 \widehat B_\alpha^{rj}+\widehat B_\alpha^{jr}
 \bigr).
\end{equation}
The symmetry of \(\nabla_iP^{k\ell}\) and the full symmetry of
\(h_{ijk}\) give
\begin{equation}\label{eq:projector-coefficient-identity}
 2[T_1]^{ij}(\nabla_iP)^{k\ell}h_{k\ell j}
 =
 \sum_{\alpha=1}^2\sum_{r,j}
 B_\alpha^{rj}h_{rj\alpha}.
\end{equation}

The additional third-order term produced by the weight is written in
the same form. Define
\begin{equation}\label{eq:weight-coefficient-definition}
 \widehat D_\alpha^{rj}
 =
 -2a\mu_\alpha\delta_{r\alpha}[T_1]^{ij}d_i,
 \qquad
 D_\alpha^{rj}
 =
 \frac12\bigl(
 \widehat D_\alpha^{rj}+\widehat D_\alpha^{jr}
 \bigr).
\end{equation}
Then
\begin{equation}\label{eq:weight-coefficient-identity}
 -2a[T_1]^{ij}d_iP^{k\ell}h_{k\ell j}
 =
 \sum_{\alpha=1}^2\sum_{r,j}
 D_\alpha^{rj}h_{rj\alpha}.
\end{equation}
For each \(\alpha\), the linear coefficient and quadratic weight are,
respectively,
\[
 (1-a d)B_\alpha+D_\alpha,
 \qquad
 (1-a d)\mu_\alpha.
\]

Comparing the graph connection with the Euclidean connection \(D\),
we have
\begin{equation}\label{eq:projector-connection-split}
 (\nabla_iP)^{k\ell}
 =
 (D_iP)^{k\ell}
 +Wu^k h_{ip}P^{p\ell}
 +Wu^\ell h_{ip}P^{kp}.
\end{equation}
The tensor \(DP\) is bounded by the fixed geometry of the collar.
Consequently,
\begin{align}
 \sum_{\alpha=1}^2|B_\alpha|^2
 &\leq
 C\bigl\{|[T_1]|^2+|[T_1]h|^2\bigr\},                    \label{eq:B-coefficient-norm}\\
 \sum_{\alpha=1}^2
 \left|
 \sum_{i,j}B_\alpha^{ij}h_{ij}
 \right|^2
 &\leq
 C\bigl\{|[T_1]h|^2+|[T_1]h^2|^2\bigr\}.                 \label{eq:B-coefficient-contraction}
\end{align}
Since \(0<a\leq1\), \(|Dd|\) is bounded, and the \(\mu_\alpha\) have
fixed bounds, \eqref{eq:weight-coefficient-definition} also gives
\begin{align}
 \sum_{\alpha=1}^2|D_\alpha|^2
 &\leq C|[T_1]|^2,                                       \label{eq:D-coefficient-norm}\\
 \sum_{\alpha=1}^2
 \left|
 \sum_{i,j}D_\alpha^{ij}h_{ij}
 \right|^2
 &\leq C|[T_1]h|^2.                                      \label{eq:D-coefficient-contraction}
\end{align}

Passing to a principal frame for \(h_i{}^j\) and using
\eqref{eq:component-polynomial}, we obtain
\begin{align}
 &\sum_{\alpha=1}^2
 \left|(1-a d)B_\alpha+D_\alpha\right|^2
 \leq
 C\sum_i(\sigma_1-\kappa_i)^2(1+\kappa_i^2),              \label{eq:weighted-projector-norm}\\
 &\sum_{\alpha=1}^2
 \left|
 \sum_{i,j}
 \bigl\{(1-a d)B_\alpha^{ij}+D_\alpha^{ij}\bigr\}h_{ij}
 \right|^2
 \leq C(\sigma_1-\sigma_3)^2.                             \label{eq:weighted-projector-contraction}
\end{align}
Therefore,
\begin{align}
 &\sum_{\alpha=1}^2
 \left\{
 \left|(1-a d)B_\alpha+D_\alpha\right|^2
 +
 \frac{\left|
 \sum_{i,j}
 \bigl\{(1-a d)B_\alpha^{ij}+D_\alpha^{ij}\bigr\}h_{ij}
 \right|^2}
 {1+\min_i\kappa_i^2}
 \right\}                                                  \notag\\
 &\qquad\leq
 C\left\{
 \sum_i(\sigma_1-\kappa_i)^2(1+\kappa_i^2)
 +
 \frac{(\sigma_1-\sigma_3)^2}
      {1+\min_i\kappa_i^2}
 \right\}                                                   \notag\\
 &\qquad\leq
 C(1+\kappa_1)(\sigma_1-\sigma_3),
 \label{eq:newton-projector-coefficients}
\end{align}
where the last inequality follows from
\eqref{eq:newton-square-growth} and
\eqref{eq:least-curvature-growth}.

Apply Lemma~\ref{lem:sigma2-concavity} to \(h_{ij\alpha}\),
\(\alpha=1,2\), with weights \((1-a d)\mu_\alpha\) and coefficient
matrices \((1-a d)B_\alpha+D_\alpha\).  The tangency conditions follow
from \eqref{eq:first-geometric-variation}, while the required full
symmetry of \(h_{ijk}\) follows from the Codazzi identity.
The weights \((1-a d)\mu_\alpha\) have fixed positive upper and lower
bounds. Hence
\eqref{eq:newton-projector-coefficients} controls the quadratic
third-order terms in the first line of
\eqref{eq:weighted-newton-trace-expansion}, together with both linear
third-order terms.

It remains to estimate the terms containing no derivative of \(h\).
Differentiating \eqref{eq:projector-connection-split} once more, the
only terms in \(\nabla_i\nabla_jP^{k\ell}\) containing \(\nabla h\)
are
\[
 Wu^kP^{p\ell}h_{jpi}
 +Wu^\ell P^{kp}h_{jpi}.
\]
After contraction with \([T_1]^{ij}h_{k\ell}\), these terms vanish:
\[
 2Wu^kh_{k\ell}P^{\ell p}
 [T_1]^{ij}h_{ijp}=0
\]
by \eqref{eq:first-geometric-variation}. Thus no additional
third-order term is present.

The remaining terms in \(\nabla^2P\), together with the last two terms
in \eqref{eq:weighted-newton-trace-expansion}, involve only fixed
derivatives of \(d\) and \(P\), terms from the graph connection, and
zeroth-order factors in \(h\). Since \(0<a\leq1\), their absolute
value is bounded by
\begin{align}
 C\Bigl\{&
 \operatorname{tr}[T_1]|h|
 +|[T_1]h|\,|h|
 +|[T_1]h^2|\,|h| \notag\\
 &+
 \left|\operatorname{tr}([T_1]h)\right||h|^2
 +|h|^2
 \Bigr\}
 \leq
 C(1+\kappa_1)(\sigma_1-\sigma_3).
 \label{eq:newton-zero-order-remainder}
\end{align}
Here
\[
                         \operatorname{tr}([T_1]h)=2,
\]
while \eqref{eq:component-polynomial} controls
\([T_1]h\) and \([T_1]h^2\); the remaining factors are controlled by
\eqref{eq:curvature-growth} and
\[
                         |h|\leq C(1+\kappa_1).
\]

Finally, the term
\[
 2(1-a d)\sum_{\alpha=1}^2
 \mu_\alpha\sum_p h_{\alpha p}^2
\]
in \eqref{eq:weighted-newton-trace-expansion} is nonnegative and may
be discarded. Combining Lemma~\ref{lem:sigma2-concavity} with
\eqref{eq:newton-zero-order-remainder} proves
\eqref{eq:homogenization-absorption}.
\end{proof}

\begin{corollary}\label{cor:homogenized-trace}
In a fixed boundary collar,
\begin{align}
 &F^{ij}\bigl[(\delta_{k\ell}-n_kn_\ell)v_{k\ell}\bigr]_{ij}
 +F_{u_i}\bigl[(\delta_{k\ell}-n_kn_\ell)v_{k\ell}\bigr]_i \notag\\
 &\qquad\geq
 -C(1+\kappa_1)(\sigma_1-\sigma_3).
 \label{eq:homogenized-trace-estimate}
\end{align}
\end{corollary}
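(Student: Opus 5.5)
The plan is to reduce Corollary~\ref{cor:homogenized-trace} to Proposition~\ref{prop:tangential-trace} by expanding \(P^{k\ell}v_{k\ell}\), with \(P^{k\ell}=\delta_{k\ell}-n_kn_\ell\), using \eqref{eq:v-second-derivatives}. That formula gives
\[
 P^{k\ell}v_{k\ell}
 =(1-a d)P^{k\ell}u_{k\ell}
 -a u\,P^{k\ell}d_{k\ell}
 -2a P^{k\ell}d_ku_\ell .
\]
Since \(Dd=-n\) and \(P^{k\ell}n_k=0\), the last term vanishes identically in the collar. Hence
\[
 P^{k\ell}v_{k\ell}=(1-a d)P^{k\ell}u_{k\ell}-a u\,\psi,
 \qquad \psi=P^{k\ell}d_{k\ell},
\]
where \(\psi\) is a fixed \(C^3\) function on the collar, because the boundary is \(C^5\) and \(d\) is \(C^5\) there. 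Let \(L\phi=F^{ij}\phi_{ij}+F_{u_i}\phi_i\). By linearity of \(L\), Proposition~\ref{prop:tangential-trace} bounds \(L\bigl[(1-a d)P^{k\ell}u_{k\ell}\bigr]\) from below by \(-C(1+\kappa_1)(\sigma_1-\sigma_3)\). It then remains to show
\[
 |L(a u\psi)|\leq C(1+\kappa_1)(\sigma_1-\sigma_3).
\]

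For this second term, expand
\[
 L(au\psi)=a\psi\,L u+2aF^{ij}u_i\psi_j+a u\,L\psi .
\]
First, Euler homogeneity \eqref{eq:euler-full-operator} gives \(F^{ij}u_{ij}=2\). Next, \(F_{u_i}u_i\) is bounded by \(C(\sigma_1-\sigma_3)\), using \eqref{eq:Fui}, the gradient bound, and Lemma~\ref{lem:basic-algebra} (\(|[T_1]h|\) is controlled by \eqref{eq:component-polynomial}). Since \(a\leq1\) and \(|\psi|\leq C\), the first term is therefore at most \(C(\sigma_1-\sigma_3)\). The middle term is bounded by \(C\sum_iF^{ii}\leq C(\sigma_1-\sigma_3)\) by \eqref{eq:linearized-growth}.

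For the last term we use the height estimate \eqref{eq:uniform-au-bound}, \(|au|\leq C\), which is the only reason the estimate is uniform as \(a\downarrow0\). Then
\[
 |L\psi|\leq C\Bigl(\sum_iF^{ii}+\sum_i|F_{u_i}|\Bigr).
\]
Here \(|F_{u_i}|\leq C(1+|[T_1]h|)\leq C(\sigma_1-\sigma_3)\), again by \eqref{eq:Fui}, the gradient estimate, and \eqref{eq:component-polynomial}.

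Collecting these bounds gives
\[
 |L(au\psi)|\leq C(\sigma_1-\sigma_3)\leq C(1+\kappa_1)(\sigma_1-\sigma_3),
\]
which proves \eqref{eq:homogenized-trace-estimate} in the collar of Proposition~\ref{prop:tangential-trace}. No step here is delicate. The only points to check are that the \(Dd\)–gradient cross term really drops out, which relies on using the normal extension \(n=-Dd\) in \(P\), and that the drift coefficients \(F_{u_i}\) grow at most like \(\sigma_1-\sigma_3\).
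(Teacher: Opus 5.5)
Your proposal is correct and follows the paper's proof essentially line by line. The cross term $-2aP^{k\ell}d_ku_\ell$ drops out because $n=-Dd$, and Proposition~\ref{prop:tangential-trace} handles $(1-ad)P^{k\ell}u_{k\ell}$. The product-rule expansion of $L(au\psi)$ is then controlled by $F^{ij}u_{ij}=2$, $|au|\leq C$, and $\sum_iF^{ii}+\sum_i|F_{u_i}|\leq C(\sigma_1-\sigma_3)$. Bounding $F_{u_i}$ through the $[T_1]h$ form of \eqref{eq:Fui} and \eqref{eq:component-polynomial} is exactly what the paper's appeal to Lemma~\ref{lem:basic-algebra} amounts to.
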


\begin{proof}
Since \(n=-Dd\), one has
\((\delta_{k\ell}-n_kn_\ell)d_\ell=0\).  Hence
\eqref{eq:v-second-derivatives} gives
\begin{align}
 (\delta_{k\ell}-n_kn_\ell)v_{k\ell}
 ={}&(1-a d)(\delta_{k\ell}-n_kn_\ell)u_{k\ell} \notag\\
 &-a u(\delta_{k\ell}-n_kn_\ell)d_{k\ell}.
 \label{eq:projected-v-expansion}
\end{align}
The full linearization of the first term on the right is bounded from
below by \eqref{eq:homogenization-absorption}.  For the second term,
write \(A=(\delta_{k\ell}-n_kn_\ell)d_{k\ell}\).  The product rule
and \eqref{eq:euler-full-operator} give
\[
 \begin{aligned}
 F^{ij}(a uA)_{ij}+F_{u_i}(a uA)_i
 ={}&aA\bigl(2+F_{u_i}u_i\bigr)\\
 &+a u\bigl(F^{ij}A_{ij}+F_{u_i}A_i\bigr)
   +2aF^{ij}u_iA_j .
 \end{aligned}
\]
Because \(a u\), \(a\), and \(aDu\) are uniformly bounded, all
coefficients in this expression are controlled independently of \(a\).
Moreover, \eqref{eq:Fui} and Lemma~\ref{lem:basic-algebra} give
\[
             \sum_i|F_{u_i}|\leq C(\sigma_1-\sigma_3),\qquad
             \sum_iF^{ii}\leq C(\sigma_1-\sigma_3).
\]
Consequently the absolute value of the full linearization of the
second term in \eqref{eq:projected-v-expansion} is at most
\[
 C(1+\sigma_1-\sigma_3)
 \leq C(1+\kappa_1)(\sigma_1-\sigma_3).
\]
Combining the two estimates proves
\eqref{eq:homogenized-trace-estimate}.
\end{proof}

\section{Proof of the second-derivative estimate}\label{sec:completion}

We use two known curvature estimates. The global-to-boundary estimate
of Guan--Ren--Wang \cite[(3.26)]{GuanRenWang} gives
\begin{equation}\label{eq:GRW-reduction}
\max_{\overline\Omega}|D^2u|
 \leq C\left(1+\max_{\partial\Omega}|D^2u|\right).
\end{equation}
Their second fundamental form uses the opposite normal convention;
choosing \(-N\) gives the principal curvatures in
\eqref{eq:intro-admissible}.  For a fixed \(x_0\in\Omega\), the
required vertical translation is uniform: the gradient estimate
controls the oscillation of \(u\), so one may choose \(c,c_0>0\),
independently of \(a\), such that
\[
 \left\langle (x,u-u(x_0)-c),-N\right\rangle
       =\frac{x\cdot Du-u+u(x_0)+c}{W}\geq c_0>0
       \qquad\text{on }\overline\Omega.
\]
This translation changes neither \(D^2u\) nor the curvature equation.

For every fixed \(\delta>0\), Qiu's local interior estimate
\cite[Theorem~1]{QiuInterior}, applied after translating the centers
and dilating the ambient graphs over balls of radius comparable to
\(\delta\), yields
\begin{equation}\label{eq:Qiu-inner}
\sup_{\{d\geq\delta\}}|D^2u|\leq C_\delta.
\end{equation}
Proposition~\ref{prop:global-gradient} makes \(|h|\) and \(|D^2u|\)
uniformly comparable.

\begin{lemma}[Collar coercivity]\label{lem:collar-coercivity}
Under the hypotheses of Theorem~\ref{thm:boundary-C2}, let
\[
                         \rho=-d+\varepsilon d^2,
 \qquad
                         G=u_i\rho_i+a u
\]
be as in Proposition~\ref{prop:double-normal}.  In a sufficiently thin
boundary collar,
\begin{equation}\label{eq:collar-coercivity}
 1+\sum_iF^{ii}+F^{ij}u_{ik}u_{jk}
 \leq
 C\left\{
 F^{ij}\rho_i\rho_j+F^{ij}G_iG_j
 \right\}.
\end{equation}
\end{lemma}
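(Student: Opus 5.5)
The plan is to reduce \eqref{eq:collar-coercivity} to a pointwise algebraic inequality in a principal frame, with \(\xi=D\rho\) as the test vector. I will treat the \(O(1)\) part of \(DG\) by splitting the principal directions into those with bounded and those with large curvature. A thin collar is needed only so that \(\rho\) is smooth there and \(\tfrac12\le|D\rho|\le2\), as in \eqref{eq:convex-defining-function}.

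\emph{Step 1 (left side).} By \eqref{eq:linearized-growth}, the left-hand side of \eqref{eq:collar-coercivity} is comparable to \(1+\sigma_1-\sigma_3\). By \eqref{eq:curvature-growth}, \(\sigma_1-\sigma_3\ge c\), so the left side is at most \(C(\sigma_1-\sigma_3)\).

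\emph{Step 2 (structure of \(DG\)).} Differentiating gives
\[
 G_i=u_{ij}\rho_j+b_i,\qquad b_i=u_j\rho_{ji}+au_i .
\]
Here \(|b|\le C\) by Proposition~\ref{prop:global-gradient}. Write \(w_i=u_{ij}\xi_j\), so \(w=G-b\).

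Fix a principal frame \(e_1,e_2,e_3\), orthonormal for \(g_{ij}\), and expand \(\xi=\xi^ke_k\). The gradient bound makes the graph and Euclidean metrics uniformly equivalent. Hence \(\sum_k|\xi^k|^2\ge c\), and, using \eqref{eq:Fij} and \(u_{ij}=Wh_{ij}\), up to fixed constants:
\begin{itemize}
 \item \(F^{ij}\xi_i\xi_j\simeq\sum_k(\sigma_1-\kappa_k)|\xi^k|^2\);
 \item \(F^{ij}w_iw_j\simeq\sum_k(\sigma_1-\kappa_k)\kappa_k^2|\xi^k|^2\);
 \item \(F^{ij}b_ib_j\simeq\sum_k(\sigma_1-\kappa_k)|b^k|^2\).
\end{itemize}
The identity \eqref{eq:component-polynomial} then gives
\[
 \sum_k(\sigma_1-\kappa_k)(1+\kappa_k^2)|\xi^k|^2=(\sigma_1-\sigma_3)\sum_k|\xi^k|^2\ge c(\sigma_1-\sigma_3).
\]
This is the key algebraic point: the \(1\) in \(1+\kappa_k^2\) is paid by \(F^{ij}\rho_i\rho_j\), and the \(\kappa_k^2\) by \(F^{ij}w_iw_j\).

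\emph{Step 3 (replacing \(w\) by \(G\)).} This is the main obstacle. The naive bound
\[
 F^{ij}w_iw_j\le 2F^{ij}G_iG_j+C\sum_iF^{ii}
\]
leaves an error \(\sum_iF^{ii}\) that is not controlled by \(F^{ij}\rho_i\rho_j\) when \(\xi\) is nearly orthogonal to the small-\(F\) directions. I would therefore fix a large constant \(K\) and split the indices \(k\).

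If \(|\kappa_k|\le K\), then
\[
 (\sigma_1-\kappa_k)(1+\kappa_k^2)|\xi^k|^2\le(1+K^2)(\sigma_1-\kappa_k)|\xi^k|^2,
\]
which is controlled by \(F^{ij}\rho_i\rho_j\).

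If \(|\kappa_k|>K\), then \(|w^k|\simeq|\kappa_k\xi^k|\), and
\[
 (\sigma_1-\kappa_k)|w^k|^2\le 2(\sigma_1-\kappa_k)|G^k|^2+2(\sigma_1-\kappa_k)|b^k|^2 .
\]
The last term satisfies
\[
 2(\sigma_1-\kappa_k)|b^k|^2\le C(\sigma_1-\kappa_k)\le\frac{C}{K^2}(\sigma_1-\kappa_k)\kappa_k^2 .
\]
This is not directly in terms of \(|\xi^k|^2\), so I would instead compare componentwise. For \(|\kappa_k|\) large with \(|\xi^k|\) not too small, the bound \(|b^k|\le C\ll|\kappa_k\xi^k|\) gives \(|G^k|\ge\tfrac12|w^k|\). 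For the \(k\) with \(|\kappa_k\xi^k|\le 2C\), the contribution \((\sigma_1-\kappa_k)\kappa_k^2|\xi^k|^2\le C(\sigma_1-\kappa_k)\) must be handled separately. I would route it through whichever index carries a definite share of \(|\xi|^2\), say \(|\xi^j|\ge c\), since \((\sigma_1-\kappa_j)(1+\kappa_j^2)=\sigma_1-\sigma_3\) already bounds the whole left side. That index is covered either by the bounded-curvature case or by the large-curvature case with \(|G^j|\ge\tfrac12|w^j|\) once \(K\ge 4C/c\).

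\emph{Step 4 (conclusion).} Summing the cases gives
\[
 \sigma_1-\sigma_3\le C_K\bigl\{F^{ij}\rho_i\rho_j+F^{ij}G_iG_j\bigr\}.
\]
Together with Step 1, this proves \eqref{eq:collar-coercivity}. The constant depends only on the gradient bound, the bound \(\|au\|_{C^0}\le C\) from Lemma~\ref{lem:height-estimate}, and the fixed collar geometry; in particular it is independent of \(a\).
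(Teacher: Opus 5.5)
\textbf{There is a genuine gap.} It sits in the first bullet of your Step~2 and carries into the bounded-curvature case of Step~3.

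\emph{Why the first bullet fails.} Your frame $e_1,e_2,e_3$ is orthonormal for $g$, not for the Euclidean metric. In this frame $e_i\cdot e_j=\delta_{ij}-\nabla_iu\,\nabla_ju$. As a result, the two sides of your argument use different components of $D\rho$:
\begin{itemize}
\item $F^{ij}\rho_i\rho_j=W^{-1}\sum_k(\sigma_1-\kappa_k)\rho_k^2$ involves the covariant components $\rho_k=D\rho\cdot e_k$;
\item $G(e_k)=W\kappa_k\xi^k+O(1)$ involves the vector components in $D\rho=\xi^ke_k$.
\end{itemize}
These are related by $\rho_k=\xi^k-\nabla_ku\sum_j\nabla_ju\,\xi^j$. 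Your second and third bullets are fine, but the first is false. Metric equivalence gives $\sum_k\rho_k^2\simeq\sum_k(\xi^k)^2$ only for unweighted sums. The weights $\sigma_1-\kappa_k$, however, range from about $(\sigma_1-\sigma_3)/\kappa_1^2$ up to $\sigma_1-\sigma_3$.

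\emph{A concrete counterexample.} Take $\kappa_1=\Lambda$ large and $\kappa_2=\kappa_3=t$ with $2\Lambda t+t^2=1$. Let $\nabla_1u=\nabla_2u=\tfrac12$ and $\nabla_3u=0$, and choose $D\rho$ with $\rho_2=\rho_3=0$ and $\xi^1$ of order one. Then:
\begin{itemize}
\item $\xi^2=\nabla_2u\sum_j\nabla_ju\,\xi^j$ is of order one, so index $2$ is a bounded-curvature index with $|\xi^2|\geq c$;
\item nevertheless $F^{ij}\rho_i\rho_j\approx 2tW^{-1}\rho_1^2\sim\Lambda^{-1}$;
\item while $(\sigma_1-\kappa_2)(\xi^2)^2\sim\sigma_1-\sigma_3\sim\Lambda$.
\end{itemize}
So your routing rule (pick $j$ with $|\xi^j|\geq c$) can land on a bounded-curvature index where the claimed control by $F^{ij}\rho_i\rho_j$ fails. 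In this example the lemma survives only through index $1$ and $F^{ij}G_iG_j$, and your selection step does not force that choice.

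\emph{The missing ingredient.} You need the correct dichotomy: either
\[
\sum_{|\kappa_k|\leq K}\rho_k^2\geq c
\quad\text{or}\quad
\sum_{|\kappa_k|>K}(\xi^k)^2\geq c,
\]
with $c$ independent of $K$. The paper obtains this from the rank-one relation \eqref{eq:euclidean-graph-duality}, as follows.
\begin{itemize}
\item Put $s=\sum_j\nabla_ju\,\xi^j$. Multiplying the relation by $\nabla_ku$ and summing over bounded-curvature indices gives that $\bigl(1-\sum_{|\kappa_k|\leq K}(\nabla_ku)^2\bigr)s$ equals a combination of the low $\rho_k$ and the high $\xi^k$.
\item Since $\sum_k(\nabla_ku)^2=|Du|^2/W^2\leq1-c$, this controls $s$.
\item For low indices $\xi^k=\rho_k+\nabla_ku\,s$, which then controls all of $\sum_k(\xi^k)^2\geq c$; this is \eqref{eq:rank-one-component-control}.
\end{itemize}

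\emph{How to repair your proof.} Replace your selection step with this dichotomy. The rest of your argument then goes through and matches the paper's proof:
\begin{itemize}
\item the bounded-curvature case is handled by $F^{ij}\rho_i\rho_j$ at the cost of a factor $1+K^2$;
\item the large-curvature case uses $|G(e_j)|\geq\tfrac12W|\kappa_j\xi^j|$, with $K$ chosen after $c$.
\end{itemize}
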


\begin{proof}
By Proposition~\ref{prop:global-gradient}, the graph metric and the
Euclidean metric are uniformly equivalent.  In the fixed boundary
collar,
\begin{equation}\label{eq:rho-gradient}
 \rho_i=(-1+2\varepsilon d)d_i,
 \qquad
 |D\rho|\geq c.
\end{equation}
Moreover, direct differentiation of \(G\) gives
\begin{equation}\label{eq:residual-gradient}
 G_i
 =
 u_{ij}\rho_j+u_j\rho_{ji}+a u_i
 =
 u_{ij}\rho_j+O(1),
\end{equation}
where the last two terms are uniformly bounded by the gradient estimate,
the fixed bounds for \(D^2\rho\), and \(0<a\leq1\).

Fix a point in the collar and choose a graph-orthonormal principal
frame \(e_1,e_2,e_3\), so that \(h_{ij}=\kappa_i\delta_{ij}\). Write
\[
 \nabla_i u=e_i(u),\qquad
 \rho_i=e_i(\rho),\qquad
 G_i=e_i(G),\qquad
 D\rho=\sum_i\rho^i e_i.
\]
Here \(\rho^i\) are the Euclidean vector components of \(D\rho\);
they are not obtained by raising the index of \(\rho_i\) with \(g\).

Since the frame is orthonormal for the graph metric,
\[
 \delta_{ij}
 =
 g(e_i,e_j)
 =
 e_i\cdot e_j+(\nabla_i u)(\nabla_j u).
\]
Hence the Euclidean metric in this frame has components
\[
                         e_i\cdot e_j
 =
 \delta_{ij}-(\nabla_i u)(\nabla_j u).
\]
It follows that
\begin{align}
 \rho_i
 &=D\rho\cdot e_i \notag\\
 &=\rho^i-(\nabla_i u)
          \sum_j(\nabla_j u)\rho^j.
 \label{eq:euclidean-graph-duality}
\end{align}
Furthermore,
\begin{equation}\label{eq:graph-gradient-size}
 \sum_i(\nabla_i u)^2
 =
 |\nabla_g u|_g^2
 =
 \frac{|Du|^2}{W^2}
 =
 1-\frac1{W^2}
 \leq1-c(\Omega)<1.
\end{equation}

We next express \(G_i\) in the principal frame.  Since
\[
 D^2u(e_i,e_j)=Wh_{ij}=W\kappa_i\delta_{ij},
\]
equation \eqref{eq:residual-gradient} gives
\begin{align}
 G_i
 &=
 D^2u(e_i,D\rho)
 +D^2\rho(e_i,Du)
 +a\nabla_i u \notag\\
 &=
 W\kappa_i\rho^i+O(1).
 \label{eq:residual-principal-components}
\end{align}

In the same principal frame,
\[
 F^{ij}
 =
 \frac1W(\sigma_1-\kappa_i)\delta_{ij}.
\]
By \eqref{eq:component-polynomial},
\begin{equation}\label{eq:derivative-eigenvalue}
 \sigma_1-\kappa_i
 =
 \frac{\sigma_1-\sigma_3}{1+\kappa_i^2}.
\end{equation}
Let \(R\geq2\), to be chosen sufficiently large below.  For the
directions satisfying \(|\kappa_i|\leq R\), we obtain
\begin{align}
 F^{ij}\rho_i\rho_j
 &=
 \frac{\sigma_1-\sigma_3}{W}
 \sum_i\frac{\rho_i^2}{1+\kappa_i^2} \notag\\
 &\geq
 \frac{c(\sigma_1-\sigma_3)}{1+R^2}
 \sum_{|\kappa_i|\leq R}\rho_i^2.
 \label{eq:rho-low-curvature-control}
\end{align}
For the remaining directions, use
\eqref{eq:residual-principal-components}.  The elementary inequality
\[
                         (r+s)^2\geq\frac12r^2-s^2
\]
gives
\begin{align*}
 F^{ij}G_iG_j
 &=
 \frac{\sigma_1-\sigma_3}{W}
 \sum_i
 \frac{\bigl(W\kappa_i\rho^i+O(1)\bigr)^2}
      {1+\kappa_i^2}\\
 &\geq
 \frac{\sigma_1-\sigma_3}{W}
 \sum_{|\kappa_i|>R}
 \left\{
 \frac{W^2\kappa_i^2(\rho^i)^2}
      {2(1+\kappa_i^2)}
 -\frac{C}{1+\kappa_i^2}
 \right\}.
\end{align*}
Since \(W\) is bounded above and below and
\[
 \frac{\kappa_i^2}{1+\kappa_i^2}\geq c
 \qquad\text{when }|\kappa_i|>R\geq2,
\]
we find
\begin{equation}\label{eq:residual-high-curvature-control}
 F^{ij}G_iG_j
 \geq
 c(\sigma_1-\sigma_3)
 \sum_{|\kappa_i|>R}(\rho^i)^2
 -\frac{C(\sigma_1-\sigma_3)}{R^2}.
\end{equation}
It remains to show that the two groups of components in
\eqref{eq:rho-low-curvature-control} and
\eqref{eq:residual-high-curvature-control} cannot both be small.
Multiplying \eqref{eq:euclidean-graph-duality} by
\(\nabla_i u\) and summing over the indices for which
\(|\kappa_i|\leq R\), we obtain
\begin{align}
 &\left(
 1-\sum_{|\kappa_i|\leq R}(\nabla_i u)^2
 \right)
 \sum_j(\nabla_j u)\rho^j                                      \notag\\
 &\qquad=
 \sum_{|\kappa_i|\leq R}(\nabla_i u)\rho_i
 +\sum_{|\kappa_i|>R}(\nabla_i u)\rho^i.
 \label{eq:rank-one-scalar-control}
\end{align}
By \eqref{eq:graph-gradient-size},
\[
 1-\sum_{|\kappa_i|\leq R}(\nabla_i u)^2
 \geq
 1-\sum_i(\nabla_i u)^2
 \geq c(\Omega).
\]
Consequently, the Cauchy--Schwarz inequality gives
\begin{align}
 \left|
 \sum_j(\nabla_j u)\rho^j
 \right|
 \leq
 C(\Omega)
 \left\{
 \left(\sum_{|\kappa_i|\leq R}\rho_i^2\right)^{1/2}
 +
 \left(\sum_{|\kappa_i|>R}(\rho^i)^2\right)^{1/2}
 \right\}.
 \label{eq:rank-one-scalar-bound}
\end{align}
For \(|\kappa_i|\leq R\), equation
\eqref{eq:euclidean-graph-duality} can be rewritten as
\[
 \rho^i
 =
 \rho_i+(\nabla_i u)
       \sum_j(\nabla_j u)\rho^j.
\]
Using \eqref{eq:rank-one-scalar-bound}, we therefore obtain
\begin{equation}\label{eq:rank-one-component-control}
 \sum_i(\rho^i)^2
 \leq
 C(\Omega)
 \left\{
 \sum_{|\kappa_i|\leq R}\rho_i^2
 +
 \sum_{|\kappa_i|>R}(\rho^i)^2
 \right\}.
\end{equation}

The equivalence of the graph and Euclidean metrics, together with
\eqref{eq:rho-gradient}, gives
\[
                         \sum_i(\rho^i)^2\geq c.
\]
It follows from \eqref{eq:rank-one-component-control} that at least one
of the inequalities
\begin{equation}\label{eq:low-high-dichotomy}
 \sum_{|\kappa_i|\leq R}\rho_i^2\geq c,
 \qquad\text{or}\qquad
 \sum_{|\kappa_i|>R}(\rho^i)^2\geq c
\end{equation}
must hold.

All constants in the preceding estimates, apart from the displayed
factors involving \(R\), are independent of \(R\). Choose \(R\geq2\)
so large that the error in
\eqref{eq:residual-high-curvature-control} is at most one half of its
positive term whenever the second alternative in
\eqref{eq:low-high-dichotomy} holds, and keep this \(R\) fixed.

If the first alternative in \eqref{eq:low-high-dichotomy} holds,
\eqref{eq:rho-low-curvature-control} gives
\[
 F^{ij}\rho_i\rho_j
 \geq c(\sigma_1-\sigma_3),
\]
where \(c>0\) now includes the fixed factor \((1+R^2)^{-1}\).  If the
second alternative holds, the choice of \(R\) and
\eqref{eq:residual-high-curvature-control} give
\[
 F^{ij}G_iG_j
 \geq c(\sigma_1-\sigma_3).
\]
Thus in either case,
\begin{equation}\label{eq:collar-coercivity-reduced}
 F^{ij}\rho_i\rho_j+F^{ij}G_iG_j
 \geq c(\sigma_1-\sigma_3).
\end{equation}

Finally, \eqref{eq:three-pairwise-sums} gives
\[
                         \sigma_1-\sigma_3\geq1,
\]
and \eqref{eq:linearized-growth} gives
\[
 1+\sum_iF^{ii}+F^{ij}u_{ik}u_{jk}
 \leq C(\sigma_1-\sigma_3).
\]
Combining this with
\eqref{eq:collar-coercivity-reduced} proves
\eqref{eq:collar-coercivity}.
\end{proof}

\begin{proof}[Proof of Theorem~\ref{thm:boundary-C2}]
By the height and gradient estimates, all constants in
Sections~\ref{sec:normal} and~\ref{sec:boundary-trace} depend only on
the data. We use \(\rho\) and \(G\) from
Lemma~\ref{lem:collar-coercivity}.

The identity \eqref{eq:linearized-residual} gives the remaining
estimate. The same principal-frame argument as in
\eqref{eq:normal-matrix-bound} gives
\[
 \left|F^{ij}u_{\ell i}\rho_{\ell j}\right|
 \leq
 C\sum_i(\sigma_1-\kappa_i)|\kappa_i|
 \leq C(\sigma_1-\sigma_3).
\]
All drift terms remaining in \eqref{eq:linearized-residual}, as well as those
in the linearization of \(d\), have uniformly bounded coefficients
and are controlled directly by \eqref{eq:Fui}.  Indeed,
\[
 |[T_2]|^2=\sum_{i<j}\kappa_i^2\kappa_j^2
 \leq C(1+\kappa_1)(\sigma_1-\sigma_3)
 \leq C(\sigma_1-\sigma_3)^2,
\]
by \eqref{eq:curvature-cofactor-growth} and the lower bound in
\eqref{eq:curvature-growth}; hence
\(|F_{u_i}|\leq C(\sigma_1-\sigma_3)\).
Thus \eqref{eq:euler-full-operator},
Lemma~\ref{lem:basic-algebra}, and the bounded derivatives of \(d\)
and \(\rho\) give
\begin{align}
 |F^{ij}G_{ij}+F_{u_i}G_i|
 +|F^{ij}d_{ij}+F_{u_i}d_i|
 &\leq C(1+\sigma_1-\sigma_3).
 \label{eq:residual-distance-linearization}
\end{align}

Set
\begin{equation}\label{eq:M-boundary}
 M=\max_{\partial\Omega}
       \bigl[(\delta_{ij}-\nu_i\nu_j)v_{ij}\bigr]_+.
\end{equation}
Lemma~\ref{lem:boundary-semiconvexity} and
\eqref{eq:GRW-reduction} give
\begin{equation}\label{eq:global-from-boundary}
 1+|D^2u|\leq C(1+M)
 \qquad\text{in }\overline\Omega.
\end{equation}
It remains to bound \(M\), and we may assume \(M\geq1\).

Keep \(\varepsilon\) fixed as in
Proposition~\ref{prop:double-normal}. Choose a preliminary collar on
which \eqref{eq:convex-defining-function},
Corollary~\ref{cor:residual-collar},
Corollary~\ref{cor:homogenized-trace}, and
Lemma~\ref{lem:collar-coercivity} all hold, and on which
\[
 \frac12\leq|-1+2\varepsilon d|\leq\frac32,
 \qquad
 \frac12\leq1-a d\leq\frac32.
\]
Fix a common constant \(C\) for these estimates. Further shrinking the
collar does not change \(C\). We now choose
\[
              c_1,\quad c_2,\quad \beta_0,\quad \omega,\quad \delta
\]
in this order; \(C_\delta\) is introduced only after \(\delta\) has
been fixed.

Since
\[
 F^{ij}\rho_i\rho_j
 =(-1+2\varepsilon d)^2F^{ij}d_i d_j,
\]
Lemma~\ref{lem:collar-coercivity} allows us first to fix
\(c_1\geq9/4\) and then \(c_2>0\), both depending only on the data, so
that, for every \(\omega>0\),
\begin{equation}\label{eq:good-squares}
 \omega F^{ij}G_i G_j+c_1\omega F^{ij}d_i d_j
 \geq c_2\omega
       \bigl(1+\sum_iF^{ii}+F^{ij}u_{ik}u_{jk}\bigr).
\end{equation}
Next set
\[
                         \beta_0=\frac{\vartheta}{8}.
\]
With \(c_1,c_2,\beta_0\) fixed, choose \(\omega\) so large that
\begin{equation}\label{eq:omega-choice}
                  c_2\omega-C(1+\beta_0)
                  \geq\frac12c_2\omega.
\end{equation}
Finally, choose the collar width \(\delta\), no larger than the
preliminary width, so that
\begin{equation}\label{eq:delta-choice}
 C\omega(1+c_1)\delta
 \leq\min\{\beta_0/2,c_2\omega/4\}.
\end{equation}
Define
\begin{equation}\label{eq:auxiliary}
 \Phi=(\delta_{ij}-n_in_j)v_{ij}
      +\omega MG^2+c_1\omega Md^2-\beta_0 Md.
\end{equation}

On \(\partial\Omega\), \(d=G=0\), so
\(\Phi=(\delta_{ij}-n_in_j)v_{ij}\). If the maximum of \(\Phi\) on the
closed collar is attained on \(\partial\Omega\), then, since
\(M\geq1\), it is attained at a point where
\[
                 (\delta_{ij}-\nu_i\nu_j)v_{ij}=M.
\]
Lemma~\ref{lem:boundary-normal-trace} gives
\[
 0\leq \Phi_\nu
 \leq-(\vartheta-\beta_0)M+C.
\]
Here \(d_\nu=-1\), so the normal derivative of
\(-\beta_0Md\) is \(\beta_0M\), while those of the two squared
corrections vanish on \(\partial\Omega\).
Since \(\beta_0=\vartheta/8\), such a boundary maximum is impossible
once \(M\) exceeds a fixed constant.

Corollary~\ref{cor:residual-collar} gives \(|G|\leq Cd\).  Hence
\[
 \omega MG^2+c_1\omega Md^2-\beta_0 Md
 \leq Md\{C\omega(1+c_1)d-\beta_0\}.
\]
By \eqref{eq:delta-choice}, the correction is nonpositive on
\(\{d=\delta\}\). After \(\delta\) has been fixed,
\eqref{eq:Qiu-inner} gives
\[
 \Phi\leq(\delta_{ij}-n_in_j)v_{ij}\leq C_\delta
 \qquad\text{on }\{d=\delta\}.
\]
The constant \(C_\delta\) is used only here. Unless \(M\) is already
bounded by the boundary estimate above or by \(C_\delta+1\), the
maximum of \(\Phi\) lies in the interior of the collar.

At that point,
\[
                         F^{ij}\Phi_{ij}+F_{u_i}\Phi_i\leq0.
\]
On the other hand, direct differentiation gives
\begin{align}
 F^{ij}\Phi_{ij}+F_{u_i}\Phi_i={}&
 F^{ij}\bigl[(\delta_{k\ell}-n_kn_\ell)v_{k\ell}\bigr]_{ij}\notag\\
 &+F_{u_i}\bigl[(\delta_{k\ell}-n_kn_\ell)v_{k\ell}\bigr]_i \notag\\
 &+2\omega MF^{ij}G_i G_j\notag\\
 &+2\omega MG(F^{ij}G_{ij}+F_{u_i}G_i)\notag\\
 &+2c_1\omega MF^{ij}d_i d_j\notag\\
 &+2c_1\omega Md(F^{ij}d_{ij}+F_{u_i}d_i)\notag\\
 &\hspace{10mm}
 -\beta_0 M(F^{ij}d_{ij}+F_{u_i}d_i).                        \label{eq:Phi-linearization}
\end{align}
Corollary~\ref{cor:homogenized-trace},
\eqref{eq:global-from-boundary},
\eqref{eq:residual-distance-linearization}, and
\(|G|\leq Cd\), together with
\eqref{eq:hessian-curvature-comparison} and
\eqref{eq:linearized-growth}, imply
\[
 1+\kappa_1\leq CM,
 \qquad
 1+\sigma_1-\sigma_3
 \leq C\left(1+\sum_iF^{ii}+F^{ij}u_{ik}u_{jk}\right),
\]
and therefore
\begin{align*}
 &F^{ij}\bigl[(\delta_{k\ell}-n_kn_\ell)v_{k\ell}\bigr]_{ij}
 +F_{u_i}\bigl[(\delta_{k\ell}-n_kn_\ell)v_{k\ell}\bigr]_i\\
 &\qquad\geq
 -CM\bigl(1+\sum_iF^{ii}+F^{ij}u_{ik}u_{jk}\bigr),
\end{align*}
and
\begin{align*}
 &2\omega MG(F^{ij}G_{ij}+F_{u_i}G_i)
 +2c_1\omega Md(F^{ij}d_{ij}+F_{u_i}d_i)
 -\beta_0 M(F^{ij}d_{ij}+F_{u_i}d_i)\\
 &\qquad\geq
 -CM\{\omega(1+c_1)d+\beta_0\}
 \bigl(1+\sum_iF^{ii}+F^{ij}u_{ik}u_{jk}\bigr).
\end{align*}
Combining these inequalities with \eqref{eq:good-squares} and
\eqref{eq:Phi-linearization}, we obtain
\begin{align*}
 F^{ij}\Phi_{ij}+F_{u_i}\Phi_i
 \geq{}&M\{c_2\omega-C(1+\beta_0)
       -C\omega(1+c_1)d\}\\
 &\qquad\times
 \bigl(1+\sum_iF^{ii}+F^{ij}u_{ik}u_{jk}\bigr).
\end{align*}
By \eqref{eq:omega-choice} and \eqref{eq:delta-choice}, for
\(0<d<\delta\) the coefficient in braces satisfies
\[
 c_2\omega-C(1+\beta_0)-C\omega(1+c_1)d
 \geq\frac14c_2\omega>0.
\]
This contradicts the interior maximum inequality.  Hence \(M\leq C\).
Lemma~\ref{lem:boundary-semiconvexity} and
\eqref{eq:GRW-reduction} now prove \eqref{eq:main-estimate}.
\end{proof}

\section{Optimality, existence, and uniqueness}\label{sec:existence}

\subsection{Optimality of the volume threshold}

To prove Remark~\ref{rem:optimal-volume}, suppose that an admissible
solution exists.  The Newton--Maclaurin inequality gives
\[
 \sigma_1(\kappa)\geq\sqrt{3\sigma_2(\kappa)}=\sqrt3.
\]
Since
\[
 \sigma_1(\kappa)=\operatorname{div}\left(\frac{Du}{W}\right),
\]
the divergence theorem and the boundary condition yield
\begin{align}
 \sqrt3\,|\Omega|
 &\leq \int_\Omega
       \operatorname{div}\left(\frac{Du}{W}\right) \notag\\
 &=\int_{\partial\Omega}\frac{u_\nu}{W}
 <|\partial\Omega|.                                      \label{eq:flux-obstruction}
\end{align}
The last inequality is strict because \(|u_\nu|/W<1\).  Hence
\(R<\sqrt3\) when \(\Omega=B_R\), and the same argument applies to
the constant-Neumann problem \eqref{eq:classical-neumann}.

For the spherical caps in \eqref{eq:ball-cap}, direct differentiation
gives
\begin{align}
 (u_{R,a})_i&=\frac{x_i}{\sqrt{3-|x|^2}},                                  \notag\\
 (u_{R,a})_{ij}
 &=\frac{\delta_{ij}}{\sqrt{3-|x|^2}}
   +\frac{x_ix_j}{(3-|x|^2)^{3/2}}.                         \label{eq:cap-derivatives}
\end{align}
Consequently,
\begin{equation}\label{eq:cap-geometry}
 W[u_{R,a}]=\frac{\sqrt3}{\sqrt{3-|x|^2}},
 \qquad
 h_{ij}[u_{R,a}]=\frac1{\sqrt3}g_{ij}[u_{R,a}],
 \qquad
 h_i{}^j[u_{R,a}]=\frac1{\sqrt3}\delta_i{}^j.
\end{equation}
Thus \(\sigma_2(h_i{}^j[u_{R,a}])=1\), and on \(\partial B_R\),
\[
 (u_{R,a})_\nu=\frac{R}{\sqrt{3-R^2}}=-a u_{R,a}.
\]
Moreover,
\[
 a\|u_{R,a}\|_{C^0(\overline B_R)}
 \geq\frac{R}{\sqrt{3-R^2}}
 \longrightarrow\infty
 \qquad\text{as }R\uparrow\sqrt3 .
\]
This proves all assertions in Remark~\ref{rem:optimal-volume}.

\subsection{The continuity argument}

\begin{proof}[Proof of Theorem~\ref{thm:existence}]
Fix \(0<a\leq1\); the continuity argument below is carried out for
this fixed value of \(a\). After translating \(\Omega\), choose an
inscribed ball
\[
 \overline B_r\subset\Omega,
 \qquad 0<r<\sqrt3.
\]
For \(0\leq t\leq1\), let
\begin{equation}\label{eq:domain-path}
                         \Omega_t=(1-t)B_r+t\Omega
\end{equation}
be the Minkowski interpolation.  Since \(B_r\subset\Omega\) and
\(\Omega\) is convex,
\begin{equation}\label{eq:domain-path-inclusion}
 B_r\subset\Omega_t\subset\Omega,
 \qquad
 |\Omega_t|\leq|\Omega|<4\pi\sqrt3.
\end{equation}
The support function of \(\Omega_t\) is \((1-t)r+t\psi\), where
\(\psi\) is the support function of \(\Omega\).  Since
\(\partial\Omega\in C^{5,\alpha}\) and \(\Omega\) is uniformly
convex, \(\psi\in C^{5,\alpha}(\mathbb S^2)\), and
\[
 (1-t)r\,g_{\mathbb S^2}
 +t\bigl(\nabla_{\mathbb S^2}^2\psi+\psi g_{\mathbb S^2}\bigr)
\]
is bounded above and below by fixed positive multiples of
\(g_{\mathbb S^2}\).  Thus \(\{\Omega_t\}_{0\leq t\leq1}\) is a
compact \(C^{5,\alpha}\) family of uniformly convex domains.  We may
therefore choose diffeomorphisms
\[
 \Phi_t:\overline B_r\longrightarrow\overline\Omega_t
\]
with \(\Phi_0\) equal to the identity, smooth dependence on \(t\), and
uniformly bounded \(C^{5,\alpha}\) norms for both \(\Phi_t\) and
\(\Phi_t^{-1}\).

At \(t=0\), the function \(u_{r,a}\) in \eqref{eq:ball-cap} is a
strictly convex solution by \eqref{eq:cap-geometry}. Fix
\(0<\beta<\alpha\). Pullback by \(\Phi_t\) defines
\(\sigma_2^{1/2}(h_i{}^j)-1\) and the boundary condition as a
continuously differentiable map from the open set of admissible
functions in \(C^{2,\beta}(\overline B_r)\) into
\[
 C^{0,\beta}(\overline B_r)
 \times C^{1,\beta}(\partial B_r).
\]
The pulled-back boundary operator is uniformly oblique, because its
derivative with respect to the gradient is the pullback of the outer
unit normal and the family \(\Phi_t\) is uniformly regular.

At an admissible solution, the derivative of the interior component is
a positive constant multiple of the interior operator in
\begin{equation}\label{eq:continuity-linearization}
 F^{ij}v_{ij}+F_{u_i}v_i=\varphi\quad\text{in }\Omega_t,
 \qquad
 v_\nu+a v=\varphi_\partial\quad\text{on }\partial\Omega_t.
\end{equation}
Multiplication by this constant does not affect invertibility.
The system \eqref{eq:continuity-linearization} is an isomorphism.
Indeed, for
\(0\leq\theta\leq1\), consider
\[
 \bigl((1-\theta)F^{ij}+\theta\delta^{ij}\bigr)v_{ij}
 +(1-\theta)F_{u_i}v_i=\varphi,
 \qquad v_\nu+a v=\varphi_\partial.
\]
For this fixed solution, the family is elliptic with constants uniform
in \(0\leq\theta\leq1\), and the boundary operator is uniformly
oblique. The homogeneous problem has only the zero solution for every
\(\theta\):
the strong maximum principle excludes a nonconstant interior
extremum, while at a positive boundary maximum the Hopf lemma gives
\(v_\nu>0\), contrary to \(v_\nu=-a v<0\); a negative boundary minimum
is treated similarly.  At \(\theta=1\) this is the invertible
Laplace--Robin problem.  The oblique Schauder estimate and the method
of continuity \cite[Theorem~6.31]{GilbargTrudinger} now prove
invertibility at \(\theta=0\).  The implicit function theorem on the
fixed domain gives nearby \(C^{2,\beta}\) solutions.  Standard oblique
Schauder bootstrapping upgrades them to the solution class in
Theorem~\ref{thm:existence}, and hence proves openness.

For closedness, \eqref{eq:domain-path-inclusion} and
\eqref{eq:C0-estimate} first give a \(C^0\) bound for the solutions
that is uniform in \(t\).  Proposition~\ref{prop:global-gradient} then
gives the uniform first-derivative bound, and
Theorem~\ref{thm:boundary-C2} gives the uniform second-derivative
bound.  Their constants are uniform in \(t\), because the volume gap
of every \(\Omega_t\) is at least that of \(\Omega\), while the family
has uniform \(C^{5,\alpha}\) geometry and uniform convexity.  Only
after these estimates have been fixed do we derive uniform
ellipticity and apply global oblique regularity.

The second-derivative estimate implies
\(|\kappa_i|\leq K_0\).  For distinct \(i,j,k\), admissibility gives
\(\kappa_j+\kappa_k>0\), and
\[
 1=\kappa_i(\kappa_j+\kappa_k)+\kappa_j\kappa_k
 \leq K_0(\kappa_j+\kappa_k)
      +\frac{(\kappa_j+\kappa_k)^2}{4}.
\]
Consequently,
\begin{equation}\label{eq:uniform-newton-ellipticity}
 \kappa_j+\kappa_k
 \geq2\bigl(\sqrt{K_0^2+1}-K_0\bigr)>0.
\end{equation}
In a principal frame these three pairwise sums are the eigenvalues of
\([T_1]\).  Since \(F^{ij}=W^{-1}[T_1]^{ij}\), the gradient estimate
and \eqref{eq:uniform-newton-ellipticity} give uniform constants
\(0<c<C\) such that
\[
 c|\xi|^2\leq F^{ij}\xi_i\xi_j\leq C|\xi|^2
 \qquad\text{for every }\xi\in\mathbb R^3.
\]
Uniform bounds for \(\Phi_t\) and \(\Phi_t^{-1}\) transfer these
ellipticity constants to the pulled-back operators.

The jets of the pulled-back solutions lie in a compact subset of the
admissible region. For fixed gradient, pullback changes the Hessian
affinely. Hence, on a fixed neighborhood of this set, the operators
\(\sigma_2^{1/2}(h_i{}^j)-1\) are concave and uniformly elliptic in
the Hessian, with uniform structural constants. The boundary operators
are uniformly oblique and have uniform H\"older bounds.
Thus the conditions F1--F5 and G2--G3 of
\cite{LiebermanTrudingerOblique} hold uniformly, and
\cite[Theorem~1.1]{LiebermanTrudingerOblique} gives a
\(C^{2,\beta}\) bound independent of \(t\).

Let \(t_m\) be solvable and \(t_m\to t_\infty\).  After passing to a
subsequence, \(u_{t_m}\circ\Phi_{t_m}\) converges in \(C^2\) to a
solution of the pulled-back problem at \(t_\infty\).  The equation and
boundary condition pass to the limit.  Moreover,
\eqref{eq:uniform-newton-ellipticity} passes to the limiting principal
curvatures, and hence
\[
 \sigma_1
 =\frac12\sum_{i=1}^3(\sigma_1-\kappa_i)
 \geq3\bigl(\sqrt{K_0^2+1}-K_0\bigr)>0,
 \qquad \sigma_2=1.
\]
Thus the limit remains in \(\Gamma_2\).  Repeated linear oblique
Schauder estimates first give \(u\in C^{5,\beta}\); since
\(C^{5,\beta}(\overline\Omega_{t_\infty})\) embeds into
\(C^{4,\alpha}(\overline\Omega_{t_\infty})\), the limiting solution
has the regularity asserted in the theorem.  Hence the set of solvable
parameters is closed.  Since it is nonempty, open, and closed in
\([0,1]\), the problem is solvable at \(t=1\).

It remains to prove uniqueness.  Let \(u\) and \(\widetilde u\) be two
admissible solutions, set \(w=u-\widetilde u\), and suppose that
\(M=\max_{\overline\Omega}w>0\).  A maximum point cannot lie on the
boundary: there the one-sided derivative gives \(w_\nu\geq0\), whereas
the boundary condition gives \(w_\nu=-a w=-aM<0\).  Thus \(M\) is attained
at an interior point.

At every interior point where \(w=M\), the two gradients agree.  For a
fixed gradient, the symmetric representative
\[
 \frac1W g^{-1/2}D^2u\,g^{-1/2}
\]
depends linearly on the Hessian, and the matrix cone \(\Gamma_2\) is
convex.  The resulting segment at the contact point is compactly
contained in the admissible set; by continuity,
\(\widetilde u+\theta w\) remains admissible in a neighborhood for
every \(0\leq\theta\leq1\).  Subtracting the two equations there gives
\[
 0=\int_0^1\left\{
 F^{ij}[\widetilde u+\theta w]w_{ij}
 +F_{u_i}[\widetilde u+\theta w]w_i
 \right\}\,d\theta .
\]
This is a locally uniformly elliptic linear equation for \(w\).
The strong maximum principle shows that \(\{w=M\}\) is open in
\(\Omega\); it is also closed and nonempty.  Since \(\Omega\) is
connected, \(w\equiv M\), which contradicts \(w_\nu=-a w\) when
\(M>0\).  Therefore \(u\leq\widetilde u\).  Interchanging the two
solutions gives \(u=\widetilde u\).
\end{proof}

\subsection{The Neumann limit}

\begin{proof}[Proof of Theorem~\ref{thm:classical-neumann}]
For \(0<a\leq1\), let \(u_a\) be the solution given by
Theorem~\ref{thm:existence}, and set
\[
 v_a=u_a-\frac1{|\Omega|}\int_\Omega u_a,\qquad
 \lambda_a=-\frac{a}{|\Omega|}\int_\Omega u_a .
\]
Then \(\int_\Omega v_a=0\) and
\[
 \sigma_2(h_i{}^j[v_a])=1\quad\text{in }\Omega,\qquad
 (v_a)_\nu=\lambda_a-a v_a\quad\text{on }\partial\Omega .
\]
Since \(u_a\leq0\) and \(u_a\not\equiv0\), the height and gradient
estimates give
\begin{align*}
 \|v_a\|_{C^0(\overline\Omega)}
 &\leq\operatorname{diam}(\Omega)\|Dv_a\|_{L^\infty(\Omega)}
 \leq C(\Omega),\\
 0<\lambda_a&\leq a\|u_a\|_{C^0(\overline\Omega)}\leq C(\Omega).
\end{align*}
Theorem~\ref{thm:boundary-C2}, applied to \(u_a\), gives
\(|D^2v_a|=|D^2u_a|\leq C\), with \(C\) independent of
\(0<a\leq1\).  Thus \eqref{eq:uniform-newton-ellipticity} and the
gradient estimate give ellipticity constants independent of \(a\).
Fix \(0<\beta<\alpha\). The global oblique estimate with the \(C^0\)
term included gives
\[
 \|v_a\|_{C^{2,\beta}(\overline\Omega)}
 \leq C\bigl(1+\|v_a\|_{C^0}+|\lambda_a|\bigr)\leq C.
\]
Here the obliqueness constant is one, and the estimate requires only
an upper bound for the zeroth-order boundary coefficient
\(a\in[0,1]\), not a positive lower bound.  Hence \(C\) is independent
of \(a\).

Choose \(a_j\downarrow0\) so that \(\lambda_{a_j}\to\lambda\) and,
after passing to a further subsequence, \(v_{a_j}\to v\) in \(C^2\).
Since \(a_j\|v_{a_j}\|_{C^0}\to0\), the equation and boundary
condition pass to
\[
 \sigma_2(h_i{}^j[v])=1\quad\text{in }\Omega,
 \qquad v_\nu=\lambda\quad\text{on }\partial\Omega.
\]
The lower bound \eqref{eq:uniform-newton-ellipticity} passes to the
limit, so \(v\) remains in \(\Gamma_2\), and
\(\int_\Omega v=0\).  Repeated linear oblique Schauder estimates give
\(v\in C^{5,\beta}(\overline\Omega)\).  Since \(0<\alpha<1\), the
embedding \(C^{5,\beta}\hookrightarrow C^{4,\alpha}\) gives the
regularity asserted in the theorem.  Moreover,
\[
 \sqrt3\,|\Omega|
 \leq\int_\Omega\sigma_1\,dx
 =\lambda\int_{\partial\Omega}\frac1W\,dS,
\]
so \(\lambda>0\).

For uniqueness, suppose that \((u,\lambda)\) and
\((\widetilde u,\widetilde\lambda)\) are admissible solutions. If
\(\lambda<\widetilde\lambda\), add a constant to \(u\) so that
\(u-\widetilde u\) has a positive maximum. This maximum cannot lie on
the boundary, since
\((u-\widetilde u)_\nu=\lambda-\widetilde\lambda<0\); the interior
comparison argument from the preceding proof also excludes an
interior maximum. Hence \(\lambda=\widetilde\lambda\).

For equal Neumann constants, interior extrema are handled by the same
comparison argument. At a boundary extremum of
\(u-\widetilde u\), the two gradients agree. The local linearization
then applies, and the Hopf lemma excludes a nonconstant boundary
maximum because \((u-\widetilde u)_\nu=0\). Thus
\(u-\widetilde u\) is constant.
Therefore \(\lambda\) is unique and, for this constant, \(u\) is
unique modulo constants. Every sequence \(a_j\downarrow0\) therefore
has a subsequence converging in \(C^2\) to the same mean-zero solution and
the same Neumann constant.  Consequently,
\(v_a\to v\) in \(C^2(\overline\Omega)\) and
\(\lambda_a\to\lambda\) as \(a\downarrow0\).
\end{proof}

\section{An endpoint Alexandrov--Fenchel inequality}\label{sec:AF}

We first obtain a weighted ABP estimate for the Neumann constant.

\begin{lemma}[Two-sided estimate for the Neumann constant]
\label{lem:neumann-constant}
Under the hypotheses of Theorem~\ref{thm:classical-neumann}, the
Neumann constant satisfies
\begin{equation}\label{eq:lambda-ABP}
 \frac{\sqrt3|\Omega|}{|\partial\Omega|}
 \leq\frac{\lambda}{\sqrt{1+\lambda^2}}
 \leq\left(\frac{|\Omega|}{4\pi\sqrt3}\right)^{1/3}.
\end{equation}
\end{lemma}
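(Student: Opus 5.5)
The lower bound comes from the flux argument already used for the optimality remark and for \(\lambda>0\) in the proof of Theorem~\ref{thm:classical-neumann}. The upper bound comes from the weighted ABP argument of Lemma~\ref{lem:height-estimate}, adapted to the constant Neumann datum.

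\emph{Lower bound.} Let \(u\) be the admissible solution of \eqref{eq:classical-neumann}. The Newton--Maclaurin inequality gives \(\sigma_1\geq\sqrt{3\sigma_2}=\sqrt3\). Since \(\sigma_1=\operatorname{div}(Du/W)\), the divergence theorem yields
\[
 \sqrt3\,|\Omega|\leq\int_{\partial\Omega}\frac{u_\nu}{W}\,dS
 =\int_{\partial\Omega}\frac{\lambda}{W}\,dS .
\]
On \(\partial\Omega\) we have \(W^2=1+\lambda^2+|D_Tu|^2\geq1+\lambda^2\), where \(D_Tu\) is the tangential gradient. Hence \(\lambda/W\leq\lambda/\sqrt{1+\lambda^2}\) pointwise, and integrating over \(\partial\Omega\) gives the left inequality in \eqref{eq:lambda-ABP}.

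\emph{Upper bound.} We show that \(B_\lambda(0)\subset Du(\mathcal C)\), where \(\mathcal C\) is the lower contact set defined in Lemma~\ref{lem:height-estimate}. Fix \(|p|<\lambda\) and minimize \(u(x)-p\cdot x\) over \(\overline\Omega\). At a boundary minimizer \(y\), the outward normal derivative satisfies
\[
 0\geq u_\nu(y)-p\cdot\nu(y)\geq\lambda-|p|>0,
\]
which is impossible. So the minimizer is interior, lies in \(\mathcal C\), and has \(Du=p\) there. On \(\mathcal C\) we have \(D^2u\geq0\), so the principal curvatures are nonnegative. Newton--Maclaurin then gives \(\sigma_3\leq(1/3)^{3/2}\), and by \eqref{eq:contact-determinant}, \(\sigma_3=\det D^2u/W^5\). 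The area formula gives
\[
 \frac{4\pi}{3}\frac{\lambda^3}{(1+\lambda^2)^{3/2}}
 =\int_{B_\lambda}\frac{dp}{(1+|p|^2)^{5/2}}
 \leq\int_{\mathcal C}\sigma_3\,dx\leq\frac{|\Omega|}{3\sqrt3}.
\]
Taking cube roots gives the right inequality in \eqref{eq:lambda-ABP}.

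The upper bound is the only nontrivial step, and here it is simpler than in Lemma~\ref{lem:height-estimate}. The constant datum \(\lambda\) plays the role of \(r_0\) directly, with no diameter correction. The one point to check is that the strict inequality \(|p|<\lambda\) excludes boundary minima. Because the estimate comes from letting \(r\uparrow\lambda\), no strictness issue remains.
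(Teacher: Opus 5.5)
Your proof is correct and follows the paper's argument. For the lower bound, both use the flux identity $\sigma_1=\operatorname{div}(Du/W)\geq\sqrt3$ together with $W\geq\sqrt{1+\lambda^2}$ on $\partial\Omega$, where the pointwise step uses $\lambda>0$ from the theorem. For the upper bound, both use the lower-contact-set ABP argument with $B_\lambda(0)\subset Du(\mathcal C)$ and $\sigma_3\leq 1/(3\sqrt3)$. The one cosmetic difference: you integrate $\det D^2u/W^5$ against the weight $(1+|p|^2)^{-5/2}$, while the paper composes the normalized gradient $Du/W$ (Jacobian $\sigma_3$) with the radial map $p\mapsto p/\sqrt{1+|p|^2}$, whose Jacobian is exactly that weight.
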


\begin{proof}
Let $u$ be the solution of \eqref{eq:classical-neumann}, and let
\(\mathcal C\) be its lower contact set,
\[
 \mathcal C=\{x\in\Omega:
 u(y)\geq u(x)+Du(x)\cdot(y-x)
 \text{ for every }y\in\overline\Omega\}.
\]
For every $p\in B_\lambda(0)$, a minimum of $u(x)-p\cdot x$ cannot
occur on the boundary, since at a boundary minimum its outer normal
derivative is nonpositive, whereas
\[
                      u_\nu-p\cdot\nu
                      =\lambda-p\cdot\nu>0.
\]
Consequently,
\[
                         B_\lambda(0)\subset Du(\mathcal C).
\]
On $\mathcal C$ one has $D^2u\geq0$, and hence all graph principal
curvatures are nonnegative.  Since $\sigma_2=1$, the
Newton--Maclaurin inequality gives
\[
 \sigma_3\leq\left(\frac{\sigma_2}{3}\right)^{3/2}
              =\frac1{3\sqrt3}.
\]
The normalized gradient map satisfies
\[
 D_i\left(\frac{u_j}{W}\right)
 =\frac{u_{ik}g^{kj}}W=h_i{}^j,
 \qquad
 \det D\left(\frac{Du}{W}\right)=\sigma_3.
\]
The radial map $p\mapsto p/\sqrt{1+|p|^2}$ sends
$B_\lambda(0)$ onto
$B_{\lambda/\sqrt{1+\lambda^2}}(0)$.  The area formula, with
multiplicity, therefore gives
\[
 \frac{4\pi}{3}\frac{\lambda^3}{(1+\lambda^2)^{3/2}}
 \leq\int_{\mathcal C}\sigma_3\,dx
 \leq\frac{|\Omega|}{3\sqrt3}.
\]
This proves the upper bound in \eqref{eq:lambda-ABP}.  For the lower
bound, use
\[
 \sigma_1=\operatorname{div}\left(\frac{Du}{W}\right),
 \qquad \sigma_1\geq\sqrt{3\sigma_2}=\sqrt3,
\]
and the divergence theorem.  Since
$W^2=1+\lambda^2+|D_Tu|^2$ on the boundary,
\[
 \sqrt3|\Omega|
 \leq\lambda\int_{\partial\Omega}\frac1W\,dS
 \leq\frac{\lambda}{\sqrt{1+\lambda^2}}|\partial\Omega|.
\]
\end{proof}

\begin{proof}[Proof of Corollary~\ref{cor:AF}]
Both sides have the same scaling. We may therefore scale \(\Omega\)
so that \eqref{eq:volume-condition} holds and then use
Theorem~\ref{thm:classical-neumann}. Eliminating the middle term in
\eqref{eq:lambda-ABP} gives
\[
 |\partial\Omega|
 \geq \sqrt3(4\pi\sqrt3)^{1/3}|\Omega|^{2/3},
\]
and hence
\begin{equation}\label{eq:isoperimetric-from-neumann}
                       |\partial\Omega|^3
                       \geq36\pi|\Omega|^2.
\end{equation}
Rescaling proves the same inequality for the original domain.
Since $\partial\Omega$ is a convex two-sphere, Gauss--Bonnet gives
\[
             \int_{\partial\Omega}\sigma_2(\Pi)\,dS=4\pi,
\]
so \eqref{eq:isoperimetric-from-neumann} is precisely
\eqref{eq:AF-intro}.

Suppose equality holds. Then equality holds in both estimates in
Lemma~\ref{lem:neumann-constant}. Equality in the flux estimate gives
\(\sigma_1\equiv\sqrt3\) in \(\Omega\) and \(D_Tu=0\) on
\(\partial\Omega\). The identity
\[
 \sigma_1^2-3\sigma_2
 =\frac12\sum_{i<j}(\kappa_i-\kappa_j)^2
\]
shows that \(\kappa_1=\kappa_2=\kappa_3=1/\sqrt3\). With the sign
convention of \eqref{eq:graph-tensors},
\(D_iN=-h_i{}^jD_jX\), and hence \(X+\sqrt3N\) is constant. The graph
is therefore contained in a round three-sphere of radius \(\sqrt3\).
Because \(D_Tu=0\) on the connected boundary, its boundary is a
horizontal section of that sphere.  Its projection is a round
two-sphere, and \(\Omega\) is a ball. Conversely, after scaling, any
ball is represented by one of the spherical caps in
\eqref{eq:ball-cap}, and hence every ball attains equality.
\end{proof}

\begin{remark}\label{rem:AF-scope}
Corollary~\ref{cor:AF} is the \(n=3\), \(k=2\) case of
\cite[Theorem~2]{QiuXiaAF}; by Gauss--Bonnet it is the sharp
isoperimetric inequality. The following estimates also involve the
Neumann solution.
\end{remark}

\subsection{Solution-dependent geometric inequalities}

Throughout this subsection, \(u\) is any solution of
\eqref{eq:classical-neumann}, and \(\lambda\) is its Neumann constant.
Thus
\(W^2=1+\lambda^2+|D_Tu|^2\) on \(\partial\Omega\).

\begin{proposition}[Degree identity and comparison ball]
\label{prop:comparison-ball}
Set
\[
                 r_\lambda=
                 \frac{\sqrt3\,\lambda}{\sqrt{1+\lambda^2}}.
\]
Then
\begin{equation}\label{eq:degree-mass}
 \int_{\{|Du|<\lambda\}}\sigma_3(h_i{}^j)\,dx
 =\frac{4\pi}{3}\frac{\lambda^3}{(1+\lambda^2)^{3/2}},
\end{equation}
and, in particular,
\begin{equation}\label{eq:curvature-mass-sandwich}
 \frac{4\pi\sqrt3\,|\Omega|^3}{|\partial\Omega|^3}
 \leq\int_{\{|Du|<\lambda\}}\sigma_3(h_i{}^j)\,dx
 \leq\frac{|\Omega|}{3\sqrt3}.
\end{equation}
Moreover,
\begin{equation}\label{eq:comparison-ball}
 |\Omega|\geq |B_{r_\lambda}|,
 \qquad
 |\partial\Omega|\geq |\partial B_{r_\lambda}|.
\end{equation}
More precisely,
\begin{equation}\label{eq:comparison-excess}
 |\partial\Omega|-4\pi r_\lambda^2
 \geq \frac3{r_\lambda}
 \left(|\Omega|-\frac{4\pi}{3}r_\lambda^3\right).
\end{equation}
All constants are sharp, and simultaneous equality holds precisely for
balls.
\end{proposition}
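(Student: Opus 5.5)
The plan is to read \eqref{eq:degree-mass} as a degree identity for the normalized gradient map
\[
 Y=\frac{Du}{W}:\overline\Omega\to B_1(0).
\]
The proof of Lemma~\ref{lem:neumann-constant} gives \(\det DY=\sigma_3\). Since \(|Y|=\sqrt{1-W^{-2}}\) and \(W^2=1+\lambda^2+|D_Tu|^2\geq1+\lambda^2\) on \(\partial\Omega\), we have \(|Y|\geq s:=\lambda/\sqrt{1+\lambda^2}\) on the boundary. Moreover, \(|Y|<s\) if and only if \(|Du|<\lambda\), so \(Y^{-1}(B_s)=\{|Du|<\lambda\}\). Hence the degree \(\deg(Y,\Omega,q)\) is defined and constant for \(q\in B_s\). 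The degree formula, which is valid for the signed Jacobian because \(\sigma_3\) may change sign, then gives
\[
 \int_{\{|Du|<\lambda\}}\sigma_3\,dx=\deg(Y,\Omega,\cdot)\,|B_s|.
\]

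To compute the degree, I pass through the radial diffeomorphism \(p\mapsto p/\sqrt{1+|p|^2}\). This reduces it to \(\deg(Du,\Omega,p)\) for \(|p|<\lambda\). Fix \(x_0\in\Omega\) and use the homotopy
\[
 H_t=(1-t)(Du-p)+t(x-x_0).
\]
On \(\partial\Omega\) we have \((Du-p)\cdot\nu=\lambda-p\cdot\nu>0\), and \((x-x_0)\cdot\nu>0\) by convexity. Thus \(H_t\cdot\nu>0\) on the boundary, so \(H_t\) never vanishes there. The degree therefore equals \(\deg(x-x_0,\Omega,0)=1\), which proves \eqref{eq:degree-mass}. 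I expect this boundary non-vanishing for the homotopy to be the only point needing care, and the obliqueness \(u_\nu=\lambda\) handles it directly.

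For \eqref{eq:curvature-mass-sandwich}, the lower bound comes from inserting the lower bound \(s\geq\sqrt3|\Omega|/|\partial\Omega|\) of Lemma~\ref{lem:neumann-constant} into \eqref{eq:degree-mass}. For the upper bound, at points where \(\sigma_3>0\) the curvature vector lies in \(\Gamma_3\), so Maclaurin gives \(\sigma_3\leq(\sigma_2/3)^{3/2}=1/(3\sqrt3)\). Where \(\sigma_3\leq0\) the integrand only helps. Integrating over \(\{|Du|<\lambda\}\subset\Omega\) therefore gives \(|\Omega|/(3\sqrt3)\).

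Now set \(r_\lambda=\sqrt3\,s\). The two sides of the sandwich give \(\frac{4\pi}{3}s^3\leq|\Omega|/(3\sqrt3)\), that is,
\[
 |\Omega|\geq4\pi\sqrt3\,s^3=\tfrac{4\pi}{3}r_\lambda^3,
\]
which is the volume comparison in \eqref{eq:comparison-ball}. The flux lower bound in Lemma~\ref{lem:neumann-constant} reads \(|\partial\Omega|\geq\sqrt3|\Omega|/s=3|\Omega|/r_\lambda\). Subtracting \(4\pi r_\lambda^2\) from both sides gives exactly \eqref{eq:comparison-excess}. Its right-hand side is nonnegative by the volume comparison, so the area comparison in \eqref{eq:comparison-ball} follows.

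For sharpness, the spherical caps \eqref{eq:ball-cap} on \(B_R\) have \(\lambda=R/\sqrt{3-R^2}\), hence \(r_\lambda=R\). All principal curvatures equal \(1/\sqrt3\) and \(D_Tu=0\) on the boundary, so every inequality above is an equality. Conversely, equality in \eqref{eq:comparison-excess} forces equality in the flux estimate. As in the proof of Corollary~\ref{cor:AF}, this gives \(\sigma_1\equiv\sqrt3\), so the graph is umbilic with \(\kappa_i=1/\sqrt3\), and \(D_Tu=0\) on \(\partial\Omega\). The graph is then a spherical cap over a round disc, so \(\Omega\) is a ball.
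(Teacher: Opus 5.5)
Your proof is correct and follows the paper's argument essentially step for step: the straight-line homotopy from \(Du-p\) to \(x-x_0\) with positive normal component, the transfer through the radial diffeomorphism to \(Du/W\), the signed degree formula, the bound \(\sigma_3\le 1/(3\sqrt3)\), and the flux estimate of Lemma~\ref{lem:neumann-constant}, with rigidity as in Corollary~\ref{cor:AF}. The one detail the paper makes explicit and you leave implicit is that \(Y(\partial\Omega)\) can touch \(\partial B_s\). For that reason the paper applies the degree formula on \(B_{s'}\) with \(s'<s\) and then lets \(s'\uparrow s\) by dominated convergence.
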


\begin{proof}
The map \(p\mapsto p/\sqrt{1+|p|^2}\) is an
orientation-preserving diffeomorphism from \(\mathbb R^3\) to \(B_1\).
Fix \(x_0\in\Omega\).  Convexity gives
\((x-x_0)\cdot\nu>0\) on \(\partial\Omega\).
If \(|p|<\lambda\), then
\[
                   (Du-p)\cdot\nu=\lambda-p\cdot\nu>0
                   \quad\text{on }\partial\Omega.
\]
The straight boundary homotopy from \(Du-p\) to \(x-x_0\) has positive
normal component, so \(\deg(Du,\Omega,p)=1\).  If
\(|y|<\lambda/\sqrt{1+\lambda^2}\) and
\(p=y/\sqrt{1-|y|^2}\), the zeros of \(Du/W-y\) and \(Du-p\)
coincide, and their local degrees agree because the radial map has
positive Jacobian.  Moreover,
\[
 D_i\left(\frac{u_j}{W}\right)=h_i{}^j,
 \qquad
 \det D\left(\frac{Du}{W}\right)=\sigma_3(h_i{}^j).
\]
The signed degree formula gives, for
\(0<s<\lambda/\sqrt{1+\lambda^2}\),
\[
 \int_{\{|Du|/W<s\}}\sigma_3(h_i{}^j)\,dx=\frac{4\pi}{3}s^3.
\]
Letting \(s\uparrow\lambda/\sqrt{1+\lambda^2}\) and applying dominated
convergence proves \eqref{eq:degree-mass}.

For \(\kappa\in\Gamma_2\) and \(\sigma_2(\kappa)=1\),
\begin{equation}\label{eq:sigma3-global-upper}
                         \sigma_3(\kappa)\leq\frac1{3\sqrt3}.
\end{equation}
Indeed, this is immediate if \(\sigma_3\leq0\), and otherwise all
three curvatures are positive and the Newton--Maclaurin inequality
applies.  Equations \eqref{eq:degree-mass} and
\eqref{eq:sigma3-global-upper} imply
\(|\Omega|\geq4\pi r_\lambda^3/3\).  The flux estimate in
Lemma~\ref{lem:neumann-constant} is
\(r_\lambda|\partial\Omega|\geq3|\Omega|\).  These two inequalities
give \eqref{eq:curvature-mass-sandwich},
\eqref{eq:comparison-ball}, and \eqref{eq:comparison-excess}.

The equality statement follows from that of Corollary~\ref{cor:AF}.
\end{proof}

The next estimate bounds the deficit by the curvature anisotropy and
the tangential boundary gradient.

\begin{proposition}[Solution-dependent remainder estimate]
\label{prop:quantitative-remainder}
One has
\begin{align}
 |\partial\Omega|^3-36\pi|\Omega|^2
 \geq{}&36\pi\sqrt3\,|\Omega|
 \int_\Omega
 \frac{\displaystyle\sum_{i<j}(\kappa_i-\kappa_j)^2}
      {2(\sigma_1+\sqrt3)}\,dx \notag\\
 &+36\pi\sqrt3\,|\Omega|\lambda
 \int_{\partial\Omega}
 \frac{|D_Tu|^2}
 {\sqrt{1+\lambda^2}\,W
  (\sqrt{1+\lambda^2}+W)}\,dS.
 \label{eq:quantitative-isoperimetric}
\end{align}
Equality holds if and only if \(\Omega\) is a ball.
\end{proposition}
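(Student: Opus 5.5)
The plan is to sharpen the two inequalities of Lemma~\ref{lem:neumann-constant} into identities with explicit nonnegative remainders, and then combine them exactly as in the proof of Corollary~\ref{cor:AF}. Throughout, \(u\) solves \eqref{eq:classical-neumann} with constant \(\lambda>0\). Set
\[
 s=\frac{\lambda}{\sqrt{1+\lambda^2}},\qquad
 A=\int_\Omega\frac{\sum_{i<j}(\kappa_i-\kappa_j)^2}{2(\sigma_1+\sqrt3)}\,dx,\qquad
 B=\lambda\int_{\partial\Omega}\frac{|D_Tu|^2}{\sqrt{1+\lambda^2}\,W(\sqrt{1+\lambda^2}+W)}\,dS .
\]
Both \(A\) and \(B\) are nonnegative.

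\emph{Step 1: exact flux identity.} Since \(\sigma_2=1\), we have \(\sigma_1^2-3=\sigma_1^2-3\sigma_2=\tfrac12\sum_{i<j}(\kappa_i-\kappa_j)^2\). Because \(\sigma_1+\sqrt3>0\) in \(\Gamma_2\), this gives
\[
 \sigma_1-\sqrt3=\frac{\sum_{i<j}(\kappa_i-\kappa_j)^2}{2(\sigma_1+\sqrt3)} .
\]
On the boundary, \(W^2=1+\lambda^2+|D_Tu|^2\), so rationalizing yields
\[
 s-\frac{\lambda}{W}
 =\frac{\lambda\bigl(W-\sqrt{1+\lambda^2}\bigr)}{W\sqrt{1+\lambda^2}}
 =\frac{\lambda|D_Tu|^2}{W\sqrt{1+\lambda^2}\,(W+\sqrt{1+\lambda^2})} .
\]
Inserting both identities into \(\int_\Omega\sigma_1\,dx=\lambda\int_{\partial\Omega}W^{-1}\,dS\), which follows from \(\sigma_1=\operatorname{div}(Du/W)\) and \(u_\nu=\lambda\), gives the exact relation
\[
 s\,|\partial\Omega|=\sqrt3|\Omega|+A+B .
\]

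\emph{Step 2: combine with the ABP bound.} The upper bound in \eqref{eq:lambda-ABP} gives \(s^{-3}\geq 4\pi\sqrt3/|\Omega|\). Therefore
\[
 |\partial\Omega|^3=s^{-3}\bigl(\sqrt3|\Omega|+A+B\bigr)^3
 \geq\frac{4\pi\sqrt3}{|\Omega|}\bigl(\sqrt3|\Omega|+A+B\bigr)^3 .
\]
Expand the cube and discard the quadratic and cubic terms in \(A+B\), which are nonnegative. The leading term is \((4\pi\sqrt3/|\Omega|)\cdot3\sqrt3|\Omega|^3=36\pi|\Omega|^2\). The linear term is \((4\pi\sqrt3/|\Omega|)\cdot 3\cdot3|\Omega|^2(A+B)=36\pi\sqrt3\,|\Omega|(A+B)\). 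This is exactly \eqref{eq:quantitative-isoperimetric}.

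\emph{Step 3: equality.} If equality holds in \eqref{eq:quantitative-isoperimetric}, the discarded higher-order terms must vanish. Hence \(A+B=0\), so \(\sigma_1\equiv\sqrt3\) and \(D_Tu=0\) on \(\partial\Omega\). This is precisely the situation analyzed in the equality case of Corollary~\ref{cor:AF}, and that argument shows \(\Omega\) is a ball. Conversely, for a ball the spherical cap in \eqref{eq:ball-cap} is umbilic with \(D_Tu=0\), so \(A=B=0\) and \(|\partial\Omega|^3=36\pi|\Omega|^2\); thus both sides of \eqref{eq:quantitative-isoperimetric} vanish.

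The only delicate point is bookkeeping rather than analysis. One must check that the flux relation is an identity, not an inequality, and that the constant \(36\pi\sqrt3\) comes out exactly from the first-order term of the cube expansion. No new estimates beyond Lemma~\ref{lem:neumann-constant} are needed.
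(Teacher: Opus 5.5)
Your proof is correct and follows the paper's argument: the paper uses the ABP upper bound in Lemma~\ref{lem:neumann-constant}, then $s^3-t^3\geq 3t^2(s-t)$ with $t=\sqrt3|\Omega|/|\partial\Omega|$, then the exact flux decomposition $s|\partial\Omega|-\sqrt3|\Omega|=A+B$, which is your cube expansion in factored form, and it reduces the equality case to the rigidity argument of Corollary~\ref{cor:AF} as you do. For the converse, since $u$ is an arbitrary solution, either invoke the uniqueness in Theorem~\ref{thm:classical-neumann} to identify $u$ with the cap, or note that the left side vanishes for a ball, so the inequality itself forces the right side to be zero.
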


\begin{proof}
\begin{align*}
 |\partial\Omega|^3-36\pi|\Omega|^2
 &=\frac{4\pi\sqrt3|\partial\Omega|^3}{|\Omega|}
 \left\{
 \frac{|\Omega|}{4\pi\sqrt3}
 -\left(\frac{\sqrt3|\Omega|}{|\partial\Omega|}\right)^3
 \right\}\\
 &\geq\frac{4\pi\sqrt3|\partial\Omega|^3}{|\Omega|}
 \left\{
 \left(\frac{\lambda}{\sqrt{1+\lambda^2}}\right)^3
 -\left(\frac{\sqrt3|\Omega|}{|\partial\Omega|}\right)^3
 \right\}\\
 &\geq36\pi\sqrt3|\Omega|
 \left\{
 \frac{\lambda|\partial\Omega|}{\sqrt{1+\lambda^2}}
 -\sqrt3|\Omega|
 \right\}.
\end{align*}
The last factor has the exact decomposition
\begin{align*}
 \frac{\lambda|\partial\Omega|}{\sqrt{1+\lambda^2}}
 -\sqrt3|\Omega|
 ={}&\int_\Omega(\sigma_1-\sqrt3)\,dx
     +\int_{\partial\Omega}
       \left(\frac{\lambda}{\sqrt{1+\lambda^2}}
             -\frac{\lambda}{W}\right)dS\\
 ={}&\int_\Omega
 \frac{\displaystyle\sum_{i<j}(\kappa_i-\kappa_j)^2}
      {2(\sigma_1+\sqrt3)}\,dx\\
 &+\lambda\int_{\partial\Omega}
 \frac{|D_Tu|^2}
 {\sqrt{1+\lambda^2}\,W
  (\sqrt{1+\lambda^2}+W)}\,dS.
\end{align*}
This proves \eqref{eq:quantitative-isoperimetric}.  In the equality
case both bounds in Lemma~\ref{lem:neumann-constant} are equalities;
the rigidity argument in the proof of Corollary~\ref{cor:AF} then
shows that \(\Omega\) is a ball.  Balls give equality.
\end{proof}

The flux identity also gives a sharp weighted boundary product.

\begin{proposition}[Weighted boundary inequality]
\label{prop:weighted-HK}
Let \(H_{\partial\Omega}=\sigma_1(\Pi)\).  Then
\begin{equation}\label{eq:weighted-HK}
 \left(\int_{\partial\Omega}
       \frac{\lambda^2H_{\partial\Omega}}{W^2}\,dS\right)
 \left(\int_{\partial\Omega}
       \frac1{H_{\partial\Omega}}\,dS\right)
 \geq3|\Omega|^2.
\end{equation}
The constant is optimal, and balls attain equality.
\end{proposition}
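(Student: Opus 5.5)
The plan is to combine the flux identity already used in the proof of Lemma~\ref{lem:neumann-constant} with a single Cauchy--Schwarz inequality on \(\partial\Omega\). Uniform convexity of \(\Omega\) gives \(H_{\partial\Omega}>0\), so both integrals in \eqref{eq:weighted-HK} are finite and positive.

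\emph{Step 1: the flux bound.} By the Newton--Maclaurin inequality, \(\sigma_1\geq\sqrt{3\sigma_2}=\sqrt3\). Using \(\sigma_1=\operatorname{div}(Du/W)\), the divergence theorem and \(u_\nu=\lambda\) give
\[
 \sqrt3\,|\Omega|\leq\int_\Omega\sigma_1\,dx=\int_{\partial\Omega}\frac{\lambda}{W}\,dS .
\]

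\emph{Step 2: Cauchy--Schwarz.} Factor the boundary integrand as
\[
 \frac{\lambda}{W}=\frac{\lambda H_{\partial\Omega}^{1/2}}{W}\cdot H_{\partial\Omega}^{-1/2}.
\]
The Cauchy--Schwarz inequality on \(\partial\Omega\) then gives
\[
 \left(\int_{\partial\Omega}\frac{\lambda}{W}\,dS\right)^2
 \leq\left(\int_{\partial\Omega}\frac{\lambda^2H_{\partial\Omega}}{W^2}\,dS\right)
 \left(\int_{\partial\Omega}\frac{1}{H_{\partial\Omega}}\,dS\right).
\]
Squaring the flux bound from Step 1 and combining it with this inequality yields \eqref{eq:weighted-HK}.

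\emph{Step 3: sharpness.} Take the spherical cap \eqref{eq:ball-cap} on \(B_R\) with \(0<R<\sqrt3\). By Remark~\ref{rem:optimal-volume} it solves \eqref{eq:classical-neumann} with \(\lambda=R/\sqrt{3-R^2}\). By \eqref{eq:cap-geometry}, on \(\partial B_R\) one has \(W=\sqrt3/\sqrt{3-R^2}\), hence \(\lambda/W=R/\sqrt3\), and \(H_{\partial\Omega}=2/R\). The two factors on the left of \eqref{eq:weighted-HK} are then
\[
 \frac{R^2}{3}\cdot\frac2R\cdot4\pi R^2=\frac{8\pi R^3}{3},
 \qquad
 \frac R2\cdot4\pi R^2=2\pi R^3 .
\]
Their product is \(16\pi^2R^6/3\), which equals \(3|B_R|^2=3(4\pi R^3/3)^2\). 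So balls attain equality and the constant \(3\) cannot be improved. By scaling, every ball of radius below \(\sqrt3\) is covered, which is all the hypotheses of Theorem~\ref{thm:classical-neumann} allow. The consistency check is that equality requires \(\sigma_1\equiv\sqrt3\) and \(H_{\partial\Omega}\lambda/W\) proportional to \(1\) on the boundary; both hold for the cap.

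There is no real obstacle here. The only point needing care is the choice of weight splitting in Step 2: it must turn the flux integrand \(\lambda/W\) into exactly the two factors appearing in the statement.
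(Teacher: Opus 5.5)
Your proposal is correct and follows the paper's proof: the paper likewise applies Cauchy--Schwarz to split \(\lambda/W\) into the two stated factors, uses the flux identity \(\lambda\int_{\partial\Omega}W^{-1}\,dS=\int_\Omega\sigma_1\,dx\) together with \(\sigma_1\geq\sqrt3\), and notes that the spherical caps give equality. Your explicit evaluation of both factors on \(\partial B_R\), showing that their product is \(16\pi^2R^6/3=3|B_R|^2\), is a check the paper only asserts.
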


\begin{proof}
The Cauchy--Schwarz inequality and the flux identity give
\begin{align*}
 \left(\int_{\partial\Omega}
       \frac{\lambda^2H_{\partial\Omega}}{W^2}\,dS\right)
 \left(\int_{\partial\Omega}
       \frac1{H_{\partial\Omega}}\,dS\right)
 &\geq\left(\lambda\int_{\partial\Omega}\frac1W\,dS\right)^2\\
 &=\left(\int_\Omega\sigma_1\,dx\right)^2
 \geq3|\Omega|^2.
\end{align*}
The spherical caps over balls realize equality.
\end{proof}

\section*{Acknowledgments}

The author acknowledges support from Grant 2025YFA1017603 of the
National Key R\&D Program of China and Grant 12571227 of the National
Natural Science Foundation of China.

\end{document}